\tikzset{C/.style={circle, minimum size=8mm,
                   node contents={},
                   append after command={\pgfextra{%
        \draw[-{Straight Barb[flex']}](\tikzlastnode.150) arc (150:450:4mm);}
                }}
        }
\newtheorem{cor*}{Corollary}
\newtheorem{thm*}{Theorem}
\newtheorem{lem*}{Lemma}
\newtheorem{prop*}{Proposition}
\newtheorem{qstn*}{Question}
\newtheorem{theorem}{Theorem}[section]
\newtheorem{cor}{Corollary}[theorem]
\newtheorem{prop}[theorem]{Proposition}
\newtheorem{lem}[theorem]{Lemma}
\theoremstyle{definition}
\newtheorem{defn}[theorem]{Definition}
\newtheorem{exmp}[theorem]{Example}
\newtheorem{rem}[theorem]{Remark}
\newcommand{\G}{\mathcal{G}}
\newcommand{\Z}{\mathbb{Z}}
\newcommand{\N}{\mathbb{N}}
\newcommand{\Orb}{\mathcal{O}}
\newcommand{\D}{\mathscr{D}}
\newcommand{\E}{E_{n,m}^i}
\newcommand{\wlp}{(\mathscr{D}_a,(D_{\overline{G}},\Pi_{\overline{G}}))}
\DeclareMathOperator{\aut}{\mathrm{Aut}}
\DeclareMathOperator{\out}{\mathrm{Out}}
\DeclareMathOperator{\sig}{\mathrm{sig}}
\DeclareMathOperator{\lcm}{lcm}
\DeclareMathOperator{\map}{\mathrm{Mod}}
\DeclareMathOperator{\homeo}{\mathrm{Homeo}^{+}}
\begin{document}

\title[Liftability of periodic mapping classes under alternating covers]{Liftability of periodic mapping classes\\ under alternating covers}

\author{Rajesh Dey}
\address{(R. Dey) Department of Mathematics\\
Indian Institute of Science Education and Research Bhopal\\
Bhopal Bypass Road, Bhauri \\
Bhopal 462 066, Madhya Pradesh\\
India}
\email{rdeymath@gmail.com}

\author{Kashyap Rajeevsarathy}
\address{(K. Rajeevsarathy) Department of Mathematics\\
Indian Institute of Science Education and Research Bhopal\\
Bhopal Bypass Road, Bhauri \\
Bhopal 462 066, Madhya Pradesh\\
India}
\email{kashyap@iiserb.ac.in}
\urladdr{https://home.iiserb.ac.in/$_{\widetilde{\phantom{n}}}$kashyap/}

\author{Apeksha Sanghi}
\address{(A. Sanghi) School of Mathematical Sciences\\
National Institute of Science Education and Research Bhubaneswar\\
Jatni, Padanpur\\
Odisha 752050, India}
\email{apekshasanghi@niser.ac.in}

\subjclass[2020]{Primary 57K20, Secondary 57M60}

\keywords{surface, mapping class, alternating action}

\begin{abstract} 
Let $S_g$ be the closed orientable surface of genus $g \geq 2$, and let $\mathrm{Mod}(S_g)$ be the mapping class group of $S_g$.  Let $A_n$ denote the alternating group on $n$ letters. We derive necessary and sufficient conditions under which a periodic mapping class has a conjugate that lifts under the branched cover $S_g \to S_g/A_n$ induced by an action of $A_n$ on $S_g$.  This provides a classification of the subgroups of $\mathrm{Mod}(S_g)$ that are isomorphic to $A_n \rtimes \Z_m$, up to a certain equivalence that we call weak conjugacy. As an application,  we show that for $n \geq 7$, such a subgroup of $\mathrm{Mod}(S_g)$ cannot have an irreducible periodic mapping class. Furthermore, we show that for $n \geq 5$ and $n \neq 6$, if the order of such a subgroup is greater than $5g-5$, then $m \leq 26$. Moreover,  for $g \geq 2$ and $n \geq 5$,  we establish that there exists no subgroup of $\mathrm{Mod}(S_g)$ that is isomorphic to $A_n \rtimes \mathbb{Z}$, where the $\mathbb{Z}$-component is generated by a power of a Dehn twist. Finally, we provide a complete classification of the weak conjugacy classes of such subgroups in $\mathrm{Mod}(S_{10})$ and $\mathrm{Mod}(S_{11})$.
\end{abstract}

\maketitle

\section{Introduction} 
\label{sec:intro}
 
Let $\map(S_g)$ denote the mapping class group of the closed orientable surface $S_g$ of genus $g \geq 2$. By the Nielsen realization theorem~\cite{SK}, a finite subgroup $H<\map(S_g)$ acts on $S_g$ via orientation-preserving isometries, inducing a finite-sheeted regular branched cover $p:S_g \to \mathcal{O}_H$, where $\mathcal{O}_H:=S_g/H$ is the quotient space of the $H$-action on $S_g$. The \textit{liftable mapping class group}, denoted by $\mathrm{LMod}(\mathcal{O}_H)$, is the subgroup of mapping classes in $\map(\mathcal{O}_H)$ represented by homeomorphisms that lift under the cover $p$. The problem of liftability of mapping classes originated from the pioneering works of Birman-Hilden in the early 1970s that started with~\cite{birman1971} and concluded with \cite{birman} (see also~\cite{margalit-winarski}). Since then, though the liftable mapping class groups under finite abelian covers have been widely studied \cite{broughton_normalizer,ghaswala_superelliptic,ghaswala_sphere,pankaj}, such an analysis is yet to be undertaken for covers induced by non-abelian finite simple groups. Since the most natural example of such a group is the alternating group $A_n$, our paper addresses the following question: 
\begin{qstn*} \label{qstn1} Given a conjugacy class of periodic mapping classes and an equivalence class of alternating subgroups of $\map(S_g)$, can one derive necessary and sufficient conditions under which there exists a representative mapping class that lifts under a representative alternating cover?
\end{qstn*}

As a first step towards answering this question, we introduce an equivalence relation, which we call \textit{weak conjugacy} (see Section \ref{sec:prelims}), and classify the weak conjugacy classes of subgroups of $\map(S_g)$ that are isomorphic to $A_n \rtimes \mathbb{Z}_m$ (see Proposition~\ref{prop:wlp}). As part of this classification, we provide a combinatorial tuple of integers $\D_E$ called a \textit{data set} that would encode the weak conjugacy class of subgroup $E = H \rtimes C$, where $H \cong A_n$ and $C \cong \mathbb{Z}_m$ (see Definition~\ref{defn:H_data_set}). Further, using the classical theory of group actions on surfaces~\cite{breuer}, we recover the combinatorial data $\D_H$ encoding the alternating component $H$ and data $D_{\bar{C}}$ encoding the cyclic group $\bar{C} \cong \mathbb{Z}_m$ that lifts to the cyclic component of $E$ under the following short exact sequence: 
$$1 \to H  \to E \to \bar{C} \to 1$$
(see Proposition~\ref{prop:wls}). Thus, by applying our combinatorial classification of the weak conjugacy classes, we obtain our main result, which provides an answer to the question posed above (see Theorems~\ref{thm:main1} and~\ref{thm:main2}). 
\begin{thm*}
\label{thm:main}
For $n \geq 5$ and $n \neq 6$, let $H < \map(S_g)$ that is isomorphic to an $A_n$. Then a periodic $\bar{G} \in \map(\mathcal{O}_H)$ lifts under the cover $S_g \to \mathcal{O}_H$ to a $G \in \map(S_g)$ if and only if there exists a data set $\D_E$ that has components $\D_H$ and $D_{\bar{C}}$, where $\bar{C} \cong \langle \bar{G} \rangle$.
\end{thm*}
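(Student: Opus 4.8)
The plan is to reduce the topological assertion about liftability to the purely combinatorial classification supplied by Propositions~\ref{prop:wlp} and~\ref{prop:wls}, by identifying the liftability of $\bar{G}$ with the existence of an appropriate finite subgroup $E < \map(S_g)$ that sits in the short exact sequence $1 \to H \to E \to \bar{C} \to 1$ with $\bar{C} = \langle \bar{G}\rangle$. Both implications rest on the elementary but crucial fact that, for $n \geq 5$ and $n \neq 6$, the group $A_n$ is simple with trivial center and $\out(A_n) \cong \Z_2$; this is exactly where the exclusion $n \neq 6$ is needed, since $\out(A_6) \cong \Z_2 \times \Z_2$ spoils the clean lifting argument below.

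For the forward direction, I would assume that $\bar{G}$ lifts to some $G \in \map(S_g)$ and set $E := \langle H, G\rangle$. Since $\bar{G}$ has order $m$, the lift $G^m$ covers $\bar{G}^m = \mathrm{id}$ and is therefore a deck transformation, so $G^m \in H$; hence $E$ is finite, normalizes $H$, and realizes the extension $1 \to H \to E \to \bar{C} \to 1$. The key step is to show this extension splits. The conjugation action of $E$ on $H$ induces $\psi \colon \Z_m \to \out(A_n) \cong \Z_2$; because $Z(A_n) = 1$ makes $H^{2}(\Z_m, Z(A_n))$ and $H^{3}(\Z_m, Z(A_n))$ vanish, there is a \emph{unique} extension inducing $\psi$. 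Since a split extension realizing $\psi$ always exists — for $\psi$ trivial one takes the direct product, while for $\psi$ nontrivial $m$ is even and a transposition, being an odd involution in $\aut(A_n) \cong S_n$, furnishes a homomorphic lift $\Z_m \to \aut(A_n)$ — that unique extension must itself be the semidirect product, so $E \cong A_n \rtimes \Z_m$. Proposition~\ref{prop:wlp} then attaches a data set $\D_E$ to $E$, and Proposition~\ref{prop:wls} identifies its components as precisely $\D_H$, encoding $H$, and $D_{\bar{C}}$, encoding $\bar{C}$.

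For the converse, I would start from a data set $\D_E$ whose components are $\D_H$ and $D_{\bar{C}}$. By Proposition~\ref{prop:wlp}, such a $\D_E$ is realized, uniquely up to weak conjugacy, by a subgroup $E \cong A_n \rtimes \Z_m$ of $\map(S_g)$, and Proposition~\ref{prop:wls} guarantees that its alternating component is weakly conjugate to $H$ while the quotient of the associated short exact sequence is $\bar{C} \cong \langle \bar{G}\rangle$. After replacing $E$ by a weakly conjugate copy, I may assume its alternating component is $H$ itself; the cyclic component of $E$ then produces a mapping class $G \in E$ projecting to a generator of $\bar{C}$ under $1 \to H \to E \to \bar{C} \to 1$, which is exactly a lift of $\bar{G}$ under $p$.

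The main obstacle I anticipate is controlling the weak conjugacy throughout, so that the abstract decomposition $E \cong A_n \rtimes \Z_m$ stays compatible with the geometry of the cover $p$. Concretely, I must verify that the component $\D_H$ of $\D_E$ records the prescribed subgroup $H$ rather than some abstractly isomorphic copy of $A_n$, that $D_{\bar{C}}$ records $\langle \bar{G}\rangle$ together with its action on $\Orb_H$, and that the weak conjugacy used to normalize $E$ in the converse does not alter the cover itself. Reconciling these geometric identifications with the algebraic splitting argument is the delicate point; once it is settled, Propositions~\ref{prop:wlp} and~\ref{prop:wls} deliver the two equivalences mechanically.
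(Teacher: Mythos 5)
Your proposal is correct and follows essentially the same route as the paper: the splitting of the extension for $n \neq 6$ (your cohomological argument is precisely the coupling classification behind Proposition~\ref{prop:extension}), combined with the data-set correspondences of Propositions~\ref{prop:wlp} and~\ref{prop:wls}, is exactly how the paper deduces Theorems~\ref{thm:main1} and~\ref{thm:main2}. One caution: in the converse you cannot literally ``assume the alternating component is $H$ itself'' after a weak-conjugacy replacement, since weak conjugacy only yields a component weakly conjugate to $H$ --- but as the theorem (per the abstract and Definition~\ref{defn:wlp}) is asserted only up to this equivalence, the conclusion you reach coincides with what the paper proves.
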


We provide several applications of Theorem~\ref{thm:main} in Section~\ref{sec:apps}. As a first application, in Subsection~\ref{subsec:liftinv}, we provide a complete characterization of the liftability of involutions under alternating covers. In this direction, we derive sufficient conditions under which a group generated by an involution will lift under an $A_n$-cover to a group isomorphic to the symmetric group $\Sigma_n = A_n \rtimes \Z_2$ or the group $A_n \times \Z_2$ (see Theorem~\ref{thm:alternating_extensions}). As another application relating to involutions, we show the following:
\begin{cor*}
Let $E < \map(S_g)$ such that $E \cong A_n \rtimes \Z_m$, where $n \geq 10$, $m$ is odd if $E$ is a direct product, and $m/2$ is odd when $E$ is not a direct product. Then $E$ cannot contain a hyperelliptic involution.
\end{cor*}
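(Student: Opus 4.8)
The plan is to reduce the statement to two facts about the finite group $E$ acting on $S_g$ (via the Nielsen realization theorem): first, that any hyperelliptic involution contained in $E$ must lie in the center $Z(E)$; and second, that under the stated arithmetic hypotheses on $m$, the center $Z(E)$ contains no element of order two. Together these give the conclusion at once, since a hyperelliptic involution has order two. As we will see, the two hypotheses on $m$ (namely $m$ odd in the direct-product case and $m/2$ odd otherwise) are exactly what is needed to make $Z(E)$ free of involutions.

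For the first fact, I would use the symplectic representation $\rho\colon \map(S_g)\to \mathrm{Sp}(2g,\Z)$, $f\mapsto f_*$, on $H_1(S_g;\Z)$. A hyperelliptic involution $\iota$ is characterized by $\rho(\iota)=-I$, and since $-I$ is central in $\mathrm{Sp}(2g,\Z)$, every conjugate $g\iota g^{-1}$ (for $g\in E$) also satisfies $\rho(g\iota g^{-1})=-I$. Hence $\rho(g\iota g^{-1}\iota)=(-I)(-I)=I$, so $g\iota g^{-1}\iota$ lies in the Torelli group $\mathcal{I}(S_g)=\ker\rho$. But $g\iota g^{-1}\iota\in E$ has finite order, and $\mathcal{I}(S_g)$ is torsion-free: any finite-order element of $\mathcal{I}(S_g)$ acts trivially on $H_1(S_g;\Z/3)$ and so lies in the level-$3$ congruence subgroup, which is torsion-free by Serre's lemma. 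Thus $g\iota g^{-1}\iota=\mathrm{id}$, i.e. $g\iota g^{-1}=\iota$ for all $g\in E$, proving $\iota\in Z(E)$.

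For the second fact, I would compute $Z(E)$ from the structure $E\cong A_n\rtimes_\phi\Z_m$, writing $\phi\colon\Z_m\to\aut(A_n)$ and using that for $n\neq 6$ one has $\aut(A_n)=\Sigma_n$ and $\out(A_n)=\Z_2$; this is essentially where the hypothesis on $n$ enters at the group-theoretic level. A short untwisting of the inner part of $\phi$ shows that $E$ is a direct product precisely when $\phi$ has image in $\mathrm{Inn}(A_n)$, equivalently when the composite $\bar\phi\colon\Z_m\to\out(A_n)=\Z_2$ is trivial. In the direct-product case $Z(E)=Z(A_n)\times\Z_m=\Z_m$, which has no involution when $m$ is odd. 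In the non-direct case $\bar\phi$ is onto, so $m$ is even and $\bar\phi(1)=1$; writing an involution of $E$ as $(a,k)$, the relation $(a,k)^2=e$ forces $2k\equiv 0\pmod m$, and since $m/2$ is odd this gives $k=0$ or $k=m/2$. If such $(a,k)$ were central, then commuting with $A_n=\{(b,0)\}$ forces $\phi_k=\mathrm{conj}_{a^{-1}}\in\mathrm{Inn}(A_n)$: for $k=0$ this reads $\mathrm{id}=\mathrm{conj}_{a^{-1}}$, so $a\in Z(A_n)=1$ and $(a,0)=e$ is not an involution, while for $k=m/2$ it forces $\bar\phi(m/2)=0$, contradicting $\bar\phi(m/2)=(m/2)\bmod 2=1$. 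Hence $Z(E)$ has no involution, and the result follows.

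I expect the main obstacle to be the first fact — forcing a hyperelliptic involution to be central in the ambient finite group $E$ — since this is where the surface-topology input (the homological characterization of hyperellipticity, the centrality of $-I$ in the symplectic group, and the torsion-freeness of the Torelli group) must be combined correctly; the center computation is then purely algebraic and is calibrated precisely by the hypotheses on $m$. One should also verify that the threshold on $n$ in the statement is consistent with the structural results on alternating extensions used in Section~\ref{sec:apps} (e.g. Theorem~\ref{thm:alternating_extensions}), even though the conceptual skeleton above requires only $n\geq 5$ with $n\neq 6$.
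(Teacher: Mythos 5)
Your proof is correct, but it follows a genuinely different route from the paper's. The paper argues combinatorially within its data-set framework: it first notes that a hyperelliptic involution forces the quotient orbifold $S_g/E$ to have genus $0$, then computes the number $2g+2$ of fixed points of the hyperelliptic involution via Riemann--Hurwitz and compares it with the count supplied by the fixed-point formula of Lemma~\ref{lem:subaction}, using the centralizer bound $|C_{E}(\sigma)|\leq (n-2)!\,m$ (which is where the parity hypotheses on $m$ enter, by ruling out central involutions) and a case analysis on the number $r\in\{3,4,\ge 5\}$ of cone points; the hypothesis $n\geq 10$ is exactly what makes the inequality $n(n-1)\geq 84$ work in the $r=3$ case. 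You instead show that a hyperelliptic involution must be central in any finite subgroup containing it --- via $\rho(\iota)=-I$, centrality of $-I$ in $\mathrm{Sp}(2g,\Z)$, and torsion-freeness of the Torelli group (more precisely, of the level-$3$ congruence subgroup of $\map(S_g)$; note that Serre's lemma alone concerns symplectic matrices, and one needs the standard topological supplement that a finite-order mapping class acting trivially on $H_1(S_g;\Z/3)$ is trivial) --- and then verify that $Z(A_n\times\Z_m)\cong\Z_m$ has no involution for $m$ odd and that $Z(A_n\rtimes_\varphi\Z_m)$ has none when $m/2$ is odd, since $\varphi_{m/2}=\chi_{(1\,2)}$ is outer. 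Your argument buys more: it avoids the case analysis entirely and establishes the conclusion for all $n\geq 5$ with $n\neq 6$ (indeed the center computation only needs $Z(A_n)=1$ and $C_{\Sigma_n}(A_n)=1$), so the paper's threshold $n\geq 10$ is an artifact of its counting method; the cost is importing machinery (the symplectic representation and torsion-freeness of congruence subgroups) external to the paper's combinatorial setup, whereas the paper's proof stays entirely within the tools it has already developed.
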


In Subsection~\ref{subsec:embcyc}, we provide a bound on the order of elements of an $E < \map(S_g)$ such that $E \cong A_n  \rtimes \Z_m$. 

\begin{cor*}
Let $E < \map(S_g)$ such that $E \cong A_n  \rtimes \Z_m$, where $n \geq 7$. If $F \in E$, then: 
\begin{enumerate}[(a)]
\item $|F| \leq g$, and consequently
\item $F$ cannot be irreducible.
\end{enumerate}
\end{cor*}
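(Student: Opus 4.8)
The plan is to establish part (a) by hand and then deduce (b) from it, so I begin with the order bound. Since $E$ is finite, the Nielsen realization theorem lets me regard $E$ as a group of orientation-preserving isometries of $S_g$, so the quotient $S_g/E$ is a closed orientable $2$-orbifold whose orbifold Euler characteristic $(2-2g)/|E|$ is negative (as $g \ge 2$); hence it is hyperbolic. Invoking the classical fact that every hyperbolic $2$-orbifold has orbifold Euler characteristic at most $-1/42$ in absolute value, the Riemann--Hurwitz formula gives $2g-2 = |E|\cdot|\chi^{\mathrm{orb}}(S_g/E)| \ge |E|/42 = n!\,m/84$, whence
$$g \ge 1 + \frac{n!\,m}{168}.$$

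Next I would bound $|F|$ from above using the extension structure. Writing $\pi\colon E \to E/H \cong \Z_m$ for the quotient map and $d := |\pi(F)|$ (a divisor of $m$), one has $F^d \in H \cong A_n$, so $F^d$ has order $|F^d| \le M(A_n)$, where $M(A_n)$ denotes the maximal order of an element of $A_n$. Thus $|F|$ divides $d\,|F^d|$ and in particular $|F| \le m\,M(A_n)$. It therefore suffices to verify $M(A_n) \le n!/168$ for $n \ge 7$, since then $|F| \le m\,M(A_n) \le n!\,m/168 < g$, proving (a). The order of a permutation is the least common multiple of its cycle lengths, which is at most their product, so $M(A_n) \le 3^{n/3}$; a one-line induction (with base cases $M(A_7)=7$ and $M(A_8)=15$ checked directly) then gives $3^{n/3} \le n!/168$ for all $n \ge 7$. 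This is exactly where the hypothesis $n \ge 7$ enters: for $n = 5,6$ one has $M(A_n) = 5 > n!/168$, so the inequality --- and the conclusion itself --- can fail below the threshold, which is the reassuring sanity check that the bound is sharp in its dependence on $n$.

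For part (b), suppose toward a contradiction that $F$ is irreducible. As $F$ is periodic, $\langle F\rangle$ is realized by isometries, and such an $F$ is irreducible precisely when the quotient orbifold $S_g/\langle F\rangle$ admits no essential simple closed curve, i.e.\ when it is a sphere with exactly three cone points of orders $p,q,r$ satisfying $1/p+1/q+1/r < 1$. Riemann--Hurwitz then reads
$$2g-2 = |F|\left(1 - \frac1p - \frac1q - \frac1r\right),$$
and since the parenthesized factor lies strictly between $0$ and $1$, we obtain $|F| > 2g-2 \ge g$, contradicting part (a). I expect the only genuine care to be needed in two places: bounding $|F|$ for elements lying outside $H$ (handled above through the sequence $1 \to H \to E \to \Z_m \to 1$ rather than naively), and correctly identifying irreducible periodic classes with rigid triangle-orbifold quotients in (b); once that dictionary is in place, the factorial growth of $g$ against the sub-exponential growth of $M(A_n)$ closes the argument.
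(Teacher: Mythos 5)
Your proof is correct, but part (a) follows a genuinely different route from the paper's. The paper (Proposition 4.5, \emph{prop:bound}) proves a more general statement --- $|F|\leq g/k$ once $\tfrac{1}{4}\cdot\tfrac{n!}{\ell(n)}>84k$ --- by a contradiction argument: it bounds the order of any $(\sigma,\bar x)\in A_n\rtimes\Z_m$ by $2m\,\ell(n)$ (with $\ell(n)$ Landau's function, the maximal element order in $\Sigma_n$), invokes Landau's estimate $\ell(n)<e^{n/e}$ to see that $n!/\ell(n)\to\infty$, and compares $|H|/|F|$ against the Hurwitz constant $84$; specializing to $k=1$ and checking $\tfrac{1}{4}\cdot\tfrac{7!}{\ell(7)}=105>84$ yields $n_0=7$. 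You instead make everything explicit: $g\geq 1+n!\,m/168$ from the same Hurwitz bound, the sharper element-order bound $|F|\leq m\,M(A_n)$ obtained from the exact sequence $1\to A_n\to E\to\Z_m\to 1$ (your observation that $d\mid |F|$ so $|F|=d\,|F^d|$ is correct), and the elementary estimate $M(A_n)\leq 3^{n/3}\leq n!/168$ for $n\geq 7$. Your version is more self-contained (no appeal to the $e^{n/e}$ bound from the literature) and produces explicit constants, at the cost of not yielding the paper's family of bounds $|F|\leq g/k$ for larger $n$; both arguments rest on the same two pillars, the $84(g-1)$ theorem and the factorial-versus-subexponential comparison. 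Part (b) is essentially identical to the paper's: the paper derives $|F|\geq 2g+1$ slightly more carefully via $\tfrac{1}{n_1}+\tfrac{1}{n_2}+\tfrac{1}{n_3}\geq \tfrac{3}{|F|}$, but your cruder $|F|>2g-2\geq g$ suffices equally well against (a). One small caveat: your parenthetical remark that the conclusion ``can fail'' for $n=5,6$ is not substantiated --- what fails there is only your inequality $M(A_n)\leq n!/168$, not necessarily the statement itself --- but since this is offered as a sanity check rather than as part of the argument, it does not affect correctness.
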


In Subsection~\ref{subsec:boundcyc}, we derive a bound on the order of a liftable periodic mapping class under certain conditions. 
\begin{prop*}
Let $E < \map(S_g)$ such that $E \cong A_n \rtimes \Z_m$, where $n \geq 5$ and $n \neq 6$.  If $|E| > 5g-5$,  then $m \leq 26$.
\end{prop*}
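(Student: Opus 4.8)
The plan is to translate the order hypothesis into a constraint on the quotient orbifold of the $E$-action, and then to extract a purely number-theoretic bound from the cyclic quotient $E \to \Z_m$.

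First I would invoke the Nielsen realization theorem \cite{SK} to realize $E$ as a group of orientation-preserving isometries of $S_g$, so that $S_g \to \Orb_E := S_g/E$ is a regular branched cover. Writing the signature of $\Orb_E$ as $(h; m_1, \dots, m_k)$, the Riemann--Hurwitz formula gives $2g-2 = |E|\bigl(2h-2+\sum_{i=1}^k(1-1/m_i)\bigr)$, so the hypothesis $|E| > 5g-5 = 5(g-1)$ is equivalent to $2h-2+\sum_i(1-1/m_i) = (2g-2)/|E| < 2/5$. Since each cone contribution $1-1/m_i$ is at least $1/2$, this forces $h=0$ and $k\in\{3,4\}$: the quotient $\Orb_E$ is a sphere with three or four cone points, subject to $\sum_i 1/m_i > k - 12/5$ (that is, $>3/5$ when $k=3$ and $>8/5$ when $k=4$), while hyperbolicity keeps the sum below $k-2$.

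Next I would exploit the normal copy $H\cong A_n$ and the induced exact sequence $1\to H\to E\to \bar C\to 1$ with $\bar C\cong\Z_m$. The intermediate quotient $\Orb_H := S_g/H$ is connected, and $\Orb_H\to\Orb_E$ is a regular $\Z_m$-cover of orbifolds whose monodromy is the composite $\pi_1^{\mathrm{orb}}(\Orb_E)\to E\to\Z_m$. Under this map the standard generator $x_i$ encircling the $i$-th cone point sends to an element $\bar f_i\in\Z_m$ whose order $d_i$ divides $m_i$; moreover $\sum_i \bar f_i = 0$ in $\Z_m$ and the $\bar f_i$ generate $\Z_m$ (cf.\ the classical theory in \cite{breuer}). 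Reducing modulo each prime power $p^a\,\|\,m$ and projecting to the $p$-Sylow $\Z_{p^a}$, the images must simultaneously generate and sum to zero in a cyclic $p$-group; a single unit cannot be cancelled by non-units, so at least two of the $\bar f_i$ must be units in $\Z_{p^a}$. Hence at least two of the $d_i$, and therefore at least two of the cone orders $m_i$, are divisible by $p^a$. This divisibility statement is the crucial structural input.

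Finally I would combine this with the reciprocal inequality. Ordering $m_1\le m_2\le m_3$ when $k=3$, the fact that every prime power of $m$ divides two of the $m_i$ yields $m_2\ge p^a$ for each $p^a\,\|\,m$, while $\sum 1/m_i>3/5$ forces $m_1\in\{2,3,4\}$. A short case analysis then closes the argument: when $m_1=2$ the prime $2$ together with the odd part $q$ of $m$ each divide two cone points, and optimizing the configuration gives $1/2+3/(2q)>3/5$, whence $q\le 13$ and $m=2q\le 26$, attained by $(m_1,m_2,m_3)=(2,13,26)$; the cases $m_1\in\{3,4\}$ and the entire case $k=4$ force $m$ strictly smaller (indeed $m\mid 6$ when $k=4$). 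I expect the main obstacle to be establishing and correctly deploying the prime-power divisibility constraint — i.e.\ showing that realizability of the cyclic cover compels each $p^a\,\|\,m$ to divide at least two cone orders — and then verifying that the resulting finite optimization genuinely caps at $26$ rather than merely approaching it, since the arithmetic is tight precisely at the extremal signature $(2,13,26)$.
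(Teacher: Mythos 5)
Your proposal is correct, and its skeleton is the same as the paper's: realize $E$ by isometries, use Riemann--Hurwitz with $|E|>5g-5$ to force the quotient orbifold to be a sphere with $3$ or $4$ cone points, and then bound $m$ via the induced $\Z_m$-action on the intermediate quotient $S_g/A_n$. Where you diverge is in how the bound $m\leq 26$ is actually extracted. The paper enumerates all admissible signatures $(0;m_1,\dots,m_k)$ and asserts, signature by signature, that the existence of the induced cyclic action (via Theorem~\ref{thm:cyc_conj_class}) limits $m$ to $\{2,\dots,19\}\cup\{22,26\}$, leaving the verification implicit. You instead isolate the structural reason behind that verification: the branching orders $d_i\mid m_i$ of the $\Z_m$-cover must generate $\Z_m$ and satisfy the genus-zero product relation, so every prime power $p^a$ exactly dividing $m$ must divide at least two of the cone orders --- which is precisely condition (ii) of the paper's Definition~\ref{defn:data_set} --- and then you run a short arithmetic optimization against $\sum 1/m_i>3/5$. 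This is a genuine improvement in transparency: it explains \emph{why} the extremal signature is $(0;2,13,26)$ and why, e.g., $(0;2,15,15)$ or $(0;2,19,19)$ cannot push $m$ past $26$. Two small points to tighten: (a) in the case $m_1=2$ you write $m=2q$ with $q$ the odd part, but you must separately rule out $4\mid m$ (if $4\,\|\,m$ then $4q$ divides both $m_2$ and $m_3$, forcing $1/m_2+1/m_3\leq 1/(2q)<1/10$ unless $m<20$, so this case is harmless but needs saying); and (b) for $k=4$ the signature $(0;2,2,2,k)$ gives $m\mid 2$ while $(0;2,2,3,3)$ gives $m\mid 6$, so your parenthetical ``$m\mid 6$'' is fine but is the union of two distinct subcases. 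Neither gap affects the conclusion.
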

In the next subsection, we digress a little from our central theme to answer a related problem pertaining to the existence of certain subgroups of $\map(S_g)$ that are isomorphic to $A_n \rtimes \Z$. In particular, we show that:
\begin{prop*}
For $g \geq 2$ and $n \geq 5$, consider a subgroup $H <  \map(S_g)$ such that $H \cong A_n \rtimes \Z$, where the $\Z$-component is generated by a multitwist $\prod_{j=1}^\ell T_{c_j}^{p_j}$, where $p_j \neq 0$ for all $j$ and $c_{j_1} \neq c_{j_2}$ when $j_1\neq j_2$  Then  $\ell \geq n$. In particular, the $\Z$-component of $H$ cannot be generated by a power of a Dehn twist.
\end{prop*}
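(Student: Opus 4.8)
The plan is to show that the $A_n$-factor must permute the curves $c_1,\dots,c_\ell$, that this permutation action is forced to be faithful, and hence that $\ell$ is at least the minimal faithful permutation degree of $A_n$, which equals $n$ for $n\ge 5$.

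First I would set $c=\{[c_1],\dots,[c_\ell]\}$ and observe that, since the $c_j$ are distinct essential curves and each $p_j\neq 0$, this multicurve is precisely the canonical reduction system $\mathrm{CRS}(F)$ of the multitwist $F=\prod_{j=1}^\ell T_{c_j}^{p_j}$. The key point is to promote the algebraic fact that $A_n$ is normal in $H$ to the geometric statement that $A_n$ preserves $c$. Because $A_n\triangleleft H$, conjugation by $F$ is an automorphism $\phi\in\aut(A_n)$; as $\aut(A_n)$ is finite, $\phi$ has finite order $d$, so $F^{d}$ centralizes $A_n$. Now $F^{d}=\prod_j T_{c_j}^{dp_j}$ is a multitwist with the same support, so $\mathrm{CRS}(F^{d})=\mathrm{CRS}(F)=c$. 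Since the canonical reduction system is conjugation-equivariant, every element commuting with $F^{d}$ preserves $\mathrm{CRS}(F^{d})$; applied to each $a\in A_n$ this yields $a(c)=c$. Thus $A_n$ acts on the $\ell$ components of $c$, giving a homomorphism $\rho\colon A_n\to\Sigma_\ell$.

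Next, since $A_n$ is simple for $n\ge 5$, the kernel of $\rho$ is either trivial or all of $A_n$, and I would rule out the latter. If $A_n$ fixed every $[c_j]$, I would realize $A_n$ by isometries of a hyperbolic metric via Nielsen realization; then each $a\in A_n$ maps the geodesic representative $\gamma_j$ of $[c_j]$ to itself, inducing a homomorphism from $A_n$ into $\mathrm{Isom}(\gamma_j)$. Any finite subgroup of $O(2)$ is cyclic or dihedral, hence solvable, so this homomorphism is trivial and each $a$ fixes $\gamma_j$ pointwise. But an orientation-preserving isometry fixing a geodesic pointwise has identity differential at a point of $\gamma_j$ and is therefore the identity, contradicting $A_n\neq 1$. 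Hence $\rho$ is faithful. A faithful $A_n$-action on $\ell$ points must have at least one orbit on which $A_n$ acts faithfully (the kernels over the separate orbits intersect trivially, and by simplicity one of them is already trivial); the stabilizer of a point in that orbit is a proper subgroup, whose index --- the orbit size --- is at least the minimal index $n$ of a proper subgroup of $A_n$. Therefore $\ell\ge n$, and in particular a power of a single Dehn twist, for which $\ell=1<n$, cannot generate the $\Z$-factor.

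The main obstacle is the first step: without the passage to $F^{d}$, the possibly non-central action of $A_n$ on $F$ blocks a direct comparison of reduction systems, and it is exactly the finiteness of $\aut(A_n)$ together with conjugation-invariance of the canonical reduction system that forces $A_n$ to permute the $c_j$. The second delicate point is excluding the degenerate case in which $A_n$ fixes each curve individually; this is handled not combinatorially but by the rigidity of isometric actions, namely that an orientation-preserving isometry pinning a geodesic pointwise must be trivial.
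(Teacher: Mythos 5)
Your argument is correct, and its overall skeleton coincides with the paper's: replace the multitwist $F$ by a power $F^{d}$ that centralizes the $A_n$-factor (the paper phrases this reduction as $A_n\times\Z<A_n\rtimes\Z$), show that $A_n$ then permutes the curves $c_1,\dots,c_\ell$, use simplicity of $A_n$ to force this permutation representation to be faithful, and conclude $\ell\geq n$ from the minimal faithful permutation degree of $A_n$. Where you differ is in the two technical inputs. To obtain the permutation action you use conjugation-equivariance of the canonical reduction system of $F^{d}$; the paper instead reads off $\prod_j T_{c_j}^{p_j}=\prod_j T_{a(c_j)}^{p_j}$ from the commutation relation and appeals to the uniqueness of the multicurve underlying a multitwist (see the proofs of Lemma~\ref{lem:scc} and of the corollary following Proposition~\ref{prop_multicurve}). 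To exclude the degenerate case in which $A_n$ fixes each isotopy class $[c_j]$, you combine Nielsen realization with the rigidity of an orientation-preserving isometry fixing a geodesic pointwise; the paper instead cuts along the curve, caps with marked disks, and quotes the fact that a finite subgroup of a pure mapping class group fixing a marked point is cyclic. Both substitutions are sound, and each buys something: the canonical-reduction-system route sidesteps any discussion of when a multitwist determines its defining multicurve, while the cut-and-cap route avoids the reduction-system machinery altogether. Your closing step, passing from a faithful but possibly intransitive action on $\ell$ points to a single faithful orbit of size at least $n$, is also handled correctly via simplicity and the minimal index of a proper subgroup of $A_n$.
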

\noindent Finally, in Section~\ref{sec:table} we provide a complete classification of the weak conjugacy classes of subgroups of $\map(S_g)$ isomorphic to $A_n \rtimes \Z_m$, for $g  = 10,11$. 
 
\section{Preliminaries} 
\label{sec:prelims}
In this section, we introduce some basic concepts and results that are pivotal to the theory we develop.
\subsection{Weakly conjugate actions on surfaces}
A finite group $H$ \textit{acts on} $S_g$ if there exists a monomorphism $\epsilon: H \hookrightarrow \homeo(S_g)$. We identify $H$ with its image $\epsilon(H)$ if no confusion caused. It is known that $H$ can always be realized as isometries for some hyperbolic metric on $S_g$ and the quotient $\Orb_H:=S_g/H$ can be identified with $\mathbb{H}/\Gamma$ for some cocompact Fuchsian group $\Gamma$. A cocompact Fuchsian group $\Gamma$ admits a presentation given by:
\[\Gamma=\biggl\langle~ \alpha_1,\beta_1,..., \alpha_{g_{0}},\beta_{g_{0}},\xi_{1},...,\xi_{r}~\bigg|~\xi_{1}^{m_1},...,\xi_{r}^{m_r},\prod_{j = 1}^{r} \xi_{j}\prod_{i = 1}^{g_{0}} [\alpha_{i},\beta_{i}]~\biggr\rangle. \tag{1} \label{eqn:eqn1}
\]
The \textit{signature} of $\Gamma$ is defined to be the tuple, 
$\sig(\Gamma):=(g_0; m_1, \dots, m_r).$
The quotient space $\mathcal{O}_H = \mathbb{H}/\Gamma$ is called a \textit{hyperbolic orbifold of genus $g_0$} with \textit{orbifold signature} $\sig(\mathcal{O}_H) := \sig(\Gamma$). The orbifold fundamental group of $\Orb_H$, denoted by $\pi_1^{orb}(\Orb_H)$, is defined to be the Fuchsian group $\Gamma$. The following version of the Riemann's existence theorem~\cite[Corollary 5.9.5]{Jones}, due to Harvey~\cite{harvey}, characterizes finite group actions on surfaces. 
\begin{theorem}
\label{thm:riemann}
A finite group $H$ acts on $S_g$ with a quotient orbifold $\Orb_H$ with $\sig(\Orb_H) = (g_0; m_1,\dots, m_r)$ if and only if the following conditions hold.
\begin{enumerate}[(i)]

    \item There exists a co-compact Fuchsian group $\Gamma$ with $\sig(\Gamma) = \sig(\Orb_H)$.
    \item There exists a surjective homomorphism
$$\phi_H : \Gamma \to H,$$ which is order-preserving on torsion elements.
    \item The following Riemann-Hurwitz equation holds: 
    $$\frac{2-2g}{|H|}= 2-2g_0-\sum_{j=1}^{r} (1-\frac{1}{m_j}). $$
\end{enumerate}
\end{theorem}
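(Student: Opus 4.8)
The plan is to prove both implications by passing back and forth between the topological action of $H$ on $S_g$ and the algebra of the cocompact Fuchsian groups uniformizing $S_g$ and its quotient orbifold. Throughout I would use the fact, quoted just above in the excerpt, that any finite $H < \homeo(S_g)$ is realized by isometries of some hyperbolic metric (valid since $g \geq 2$), so that $S_g = \mathbb{H}/\Gamma_0$ for a torsion-free cocompact Fuchsian group (a surface group) $\Gamma_0 \cong \pi_1(S_g)$, and $\Orb_H = S_g/H = \mathbb{H}/\Gamma$ for a cocompact Fuchsian group $\Gamma = \pi_1^{orb}(\Orb_H)$ with the presentation in \eqref{eqn:eqn1}.

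For the forward direction, given an action of $H$, I would lift it to the universal cover $\mathbb{H}$: the composite $\mathbb{H} \to S_g \to \Orb_H$ exhibits $\Gamma_0$ as a normal subgroup of $\Gamma$ with $\Gamma/\Gamma_0 \cong H$, and I would take $\phi_H : \Gamma \to \Gamma/\Gamma_0 \cong H$ as the required surjection. Condition (i) is then immediate. Condition (ii) follows because $\Gamma_0$ is torsion-free: every element of finite order in $\Gamma$ is conjugate to a power of one of the elliptic generators $\xi_j$ (of order $m_j$), and since $\ker\phi_H = \Gamma_0$ contains no nontrivial torsion, $\phi_H(\xi_j)$ must have order exactly $m_j$, i.e.\ $\phi_H$ is order-preserving on torsion. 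Condition (iii) is the multiplicativity of the orbifold Euler characteristic under the degree-$|H|$ branched cover $S_g \to \Orb_H$, namely $\chi(S_g) = |H|\,\chi^{orb}(\Orb_H)$ with $\chi^{orb}(\Orb_H) = 2 - 2g_0 - \sum_j (1 - 1/m_j)$, which rearranges at once to the stated Riemann--Hurwitz equation.

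For the converse, I would start from data satisfying (i)--(iii) and set $\Gamma_0 := \ker \phi_H$. The crucial step is to show $\Gamma_0$ is torsion-free: since $\phi_H$ is order-preserving on torsion and every finite-order element of $\Gamma$ is conjugate into $\langle \xi_j \rangle$ for some $j$, no nontrivial power of any $\xi_j$ lies in $\ker\phi_H$, so $\Gamma_0$ has no elliptic elements. A torsion-free cocompact Fuchsian group is a surface group, hence $\mathbb{H}/\Gamma_0 = S_{g'}$ is a closed orientable surface, and $H \cong \Gamma/\Gamma_0$ acts on it with quotient $\mathbb{H}/\Gamma = \Orb_H$ of the prescribed signature $(g_0; m_1,\dots,m_r)$. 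Finally I would recover $g'$ from the same Euler-characteristic relation $2 - 2g' = |H|\,\chi^{orb}(\Orb_H)$; comparing with (iii) forces $g' = g$, so the action is on $S_g$, as required.

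The main obstacle is the torsion analysis linking condition (ii) to the torsion-freeness of $\Gamma_0$ in both directions. This rests on the structure theorem for cocompact Fuchsian groups, which guarantees that every element of finite order is conjugate to a power of an elliptic generator $\xi_j$; handling this conjugacy carefully, and verifying that an order-preserving surjection introduces no new torsion into the kernel, is the technical heart of the argument. By contrast, conditions (i) and (iii) amount to bookkeeping via uniformization and the Euler characteristic, so I expect them to be routine once the kernel is shown to be a surface group.
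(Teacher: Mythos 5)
The paper does not prove this statement at all --- it is quoted as Harvey's version of Riemann's existence theorem with citations to \cite{Jones} and \cite{harvey} --- so there is no internal proof to compare against. Your argument is correct and is essentially the standard proof from those sources: identify $\phi_H$ with the quotient map $\Gamma \to \Gamma/\Gamma_0$ arising from uniformization, use the Fuchsian torsion theorem (every finite-order element is conjugate to a power of some $\xi_j$) to show that order-preservation on torsion is equivalent to torsion-freeness of $\ker\phi_H$, and pin down the genus via multiplicativity of the orbifold Euler characteristic; the only points left tacit, both routine, are that the induced $\Gamma/\Gamma_0$-action on $\mathbb{H}/\Gamma_0$ is effective and orientation-preserving.
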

\noindent The epimorphism $\phi_H$, in Theorem \ref{thm:riemann}, is called a \textit{surface kernel map} associated to $H$. Two $H$-actions $H_1,H_2 < \homeo(S_g)$ are said to be \textit{conjugate} if $H_1 \cong H_2 \cong H$ and $H_1$ is conjugate to $H_2$ in $\homeo(S_g)$. We will also require the following result from the theory of Riemann surfaces~\cite[Lemma 11.5]{breuer}. 

\begin{lem}
\label{lem:subaction}
Let $H<\homeo(S_g)$ be of finite order with $\sig(\mathcal{O}_H)=(g_0;m_1,\ldots,m_r)$, and let $\G\in H$ be of order $m$. Then for $u\in \Z_m^{\times}$, we have
$$|\mathbb{F}_{\G}(u,m)|=|C_H(\G)|\cdot\sum\limits_{\substack{1\leq j\leq r, ~m\mid m_j \\ \G \sim_H \phi_H(\xi_j)^{m_ju/m}
}}\frac{1}{m_j},  $$
where $\mathbb{F}_{\G}(u,m)$ denote the set of fixed points of $\G$ with induced rotation angle $2\pi u/m$ and $C_H(\G)$ denotes the centralizer of $\G$ in $H$.
\end{lem}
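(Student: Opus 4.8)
The plan is to translate the count of fixed points into a count of cosets of point-stabilizers and then match the prescribed rotation angle. Since $H$ acts by orientation-preserving isometries for a suitable hyperbolic metric, a point $\tilde x \in S_g$ has nontrivial stabilizer precisely when it lies over a cone point of $\Orb_H$, and that stabilizer is then cyclic. Under the standard presentation~(\ref{eqn:eqn1}) together with the surface kernel map $\phi_H\colon \Gamma \to H$, the branch locus of $p\colon S_g \to \Orb_H$ consists of the $r$ cone points $x_1,\dots,x_r$, where $x_j$ has order $m_j$ and the stabilizer of a chosen preimage $\tilde x_j$ is the cyclic group $G_j := \langle \phi_H(\xi_j)\rangle$ of order $m_j$. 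The fiber over $x_j$ is the orbit $H\cdot \tilde x_j$, which I would identify with the coset space $H/G_j$. First I would record that $\G$ fixes the point $h\tilde x_j$ if and only if $h^{-1}\G h \in G_j$, and that, because $G_j$ is abelian, the resulting power $k = k(hG_j)$ determined by $h^{-1}\G h = \phi_H(\xi_j)^{k}$ depends only on the coset $hG_j$, not on the chosen representative.

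Next I would pin down the rotation data. Fixing the convention that $\phi_H(\xi_j)$ acts near $\tilde x_j$ as a rotation by $2\pi/m_j$, and using that conjugation by an orientation-preserving isometry preserves the local rotation angle, the angle of $\G$ at $h\tilde x_j$ equals $2\pi k(hG_j)/m_j$. Requiring this to equal $2\pi u/m$ forces $k(hG_j)\equiv m_j u/m \pmod{m_j}$; since $h^{-1}\G h$ has order $m$, the element $\phi_H(\xi_j)^{k}$ must also have order $m$, which is possible only when $m \mid m_j$, and then $\gcd(u,m)=1$ pins down $k = m_j u/m$ uniquely. Writing $w_j := \phi_H(\xi_j)^{m_j u/m}$, an element of order $m$, the fixed points of $\G$ over $x_j$ with angle $2\pi u/m$ are exactly the cosets $hG_j$ with $h^{-1}\G h = w_j$, and such cosets exist only if $\G \sim_H w_j$.

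Finally I would carry out the coset count. The set $\{h \in H : h w_j h^{-1} = \G\}$ is empty unless $\G \sim_H w_j$, in which case it is a single left coset of $C_H(w_j)$ and hence has $|C_H(w_j)|$ elements; moreover each such coset $hG_j$ consists entirely of solutions, since $w_j \in G_j$ is centralized by the abelian group $G_j$. Passing from elements $h$ to cosets $hG_j$, and using the resulting containment $G_j \le C_H(w_j)$, the number of distinct solution cosets is $|C_H(w_j)|/|G_j| = |C_H(w_j)|/m_j$; as $w_j \sim_H \G$ gives $|C_H(w_j)| = |C_H(\G)|$, this contributes $|C_H(\G)|/m_j$ fixed points over $x_j$ when $m \mid m_j$ and $\G \sim_H \phi_H(\xi_j)^{m_j u/m}$, and $0$ otherwise. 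Summing over all such $j$ and factoring out $|C_H(\G)|$ yields the stated formula. I expect the main obstacle to be the bookkeeping in the middle step: verifying that the power $k$ is well-defined on cosets, fixing a consistent orientation convention so the rotation angle transports correctly under conjugation, and confirming the containment $G_j \le C_H(\G)$ up to conjugacy that makes the final count exactly $|C_H(\G)|/m_j$, and in particular an integer.
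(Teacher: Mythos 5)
Your argument is correct. Note that the paper does not actually prove this lemma; it is quoted verbatim from Breuer \cite[Lemma 11.5]{breuer} (it is the rotation-angle refinement of Macbeath's fixed-point formula), so there is no in-paper proof to compare against. Your orbit-counting derivation is the standard one and all the key steps check out: fixed points of a nontrivial isometry lie over cone points, the fiber over $x_j$ is $H/G_j$ with $G_j=\langle\phi_H(\xi_j)\rangle$, the power $k$ with $h^{-1}\G h=\phi_H(\xi_j)^k$ is constant on left cosets $hG_j$ because $G_j$ is abelian, matching the angle $2\pi u/m$ forces $m\mid m_j$ and $k=m_ju/m$, and the solution set $\{h: h w_j h^{-1}=\G\}$ is a single left coset of $C_H(w_j)\supseteq G_j$, yielding $|C_H(\G)|/m_j$ fixed points over $x_j$. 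The only conventions you need to fix (and you flag them) are that $\phi_H(\xi_j)$ rotates by $+2\pi/m_j$ at $\tilde x_j$ and that orientation-preserving conjugation transports rotation angles; both are built into the surface-kernel correspondence of Theorem~\ref{thm:riemann}.
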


Let $\Gamma$ be a co-compact Fuchsian group having the fixed presentations as in Equation (\ref{eqn:eqn1}). Following the notation in Equation (\ref{eqn:eqn1}), we state a result of Zieschang \cite[Theorem 5.8.2]{zieschang}, interpreted for orientation-preserving maps (see also \cite{broughton_normalizer}).
\begin{theorem}
\label{thm:zieschang}
Let $\psi:\Gamma \to \Gamma$ be an automorphism. Then $\psi(\xi_j)$ is conjugate to $\xi_{\pi(j)}$ for some permutation $\pi$ on $r$ letters.

\end{theorem}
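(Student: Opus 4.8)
The plan is to reduce the statement to a structural fact about the conjugacy classes of maximal finite cyclic subgroups of $\Gamma$, and then to pin down the distinguished generator of each such class using orientation. Recall that $\Gamma$ acts properly discontinuously on $\mathbb{H}$ by orientation-preserving isometries, and an element has finite order precisely when it is elliptic, i.e. fixes a point of $\mathbb{H}$. The stabilizer of any point is finite cyclic, and a standard fundamental-domain argument shows that every such stabilizer is conjugate into one of the subgroups $\langle \xi_1\rangle,\dots,\langle \xi_r\rangle$; these are exactly the maximal finite cyclic subgroups, and $\langle \xi_{j_1}\rangle$ is conjugate to $\langle \xi_{j_2}\rangle$ in $\Gamma$ if and only if $j_1 = j_2$. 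Thus the conjugacy classes of maximal finite cyclic subgroups are represented by $\langle \xi_1\rangle,\dots,\langle \xi_r\rangle$, one for each cone point of $\Orb_H$.

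First I would observe that an automorphism $\psi$ of $\Gamma$ preserves the order of every element, carries finite cyclic subgroups to finite cyclic subgroups, preserves maximality (being an isomorphism of the whole group), and respects the conjugacy relation. Consequently $\psi$ permutes the $r$ conjugacy classes $[\langle \xi_1\rangle],\dots,[\langle \xi_r\rangle]$, which produces a permutation $\pi$ of $\{1,\dots,r\}$ together with elements $g_j \in \Gamma$ and integers $k_j$ with $\gcd(k_j,m_{\pi(j)})=1$ such that
$$\psi(\xi_j) = g_j\,\xi_{\pi(j)}^{\,k_j}\,g_j^{-1}.$$
Comparing orders gives $m_j = m_{\pi(j)}$, so $\pi$ respects the cone orders. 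It remains to show that one may take $k_j \equiv 1 \pmod{m_j}$, i.e. that $\psi(\xi_j)$ is conjugate to the distinguished generator $\xi_{\pi(j)}$ itself rather than to an arbitrary primitive power.

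This last step is exactly where the orientation-preserving hypothesis enters. The plan is to realize $\psi$, up to an inner automorphism, by an orientation-preserving self-homeomorphism $\Phi$ of the orbifold $\Orb_H = \mathbb{H}/\Gamma$; this is the content of the Dehn--Nielsen--Baer correspondence for Fuchsian groups, under which $\out(\Gamma)$ is identified with the orientation-preserving mapping class group of $\Orb_H$, and which underlies Zieschang's theory. Viewing $\Orb_H$ as the closed surface of genus $g_0$ with cone points $x_1,\dots,x_r$, each $\xi_j$ is represented by a small, positively oriented loop encircling $x_j$. The homeomorphism $\Phi$ permutes the cone points according to $\pi$, and since $\Phi$ is orientation-preserving it is, near each $x_j$, an orientation-preserving local homeomorphism taking $x_j$ to $x_{\pi(j)}$; hence it sends a positively oriented loop around $x_j$ to a positively oriented loop around $x_{\pi(j)}$. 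Because $\xi_j$ and $\xi_{\pi(j)}$ are precisely these positively oriented peripheral generators, $\psi(\xi_j)$ is freely homotopic, hence conjugate in $\Gamma$, to $\xi_{\pi(j)}$, forcing $k_j \equiv 1$.

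I expect the main obstacle to be this final identification of the distinguished generator. The algebraic input---that automorphisms permute the conjugacy classes of maximal finite cyclic subgroups---is routine, but it controls $\psi(\xi_j)$ only up to replacing $\xi_{\pi(j)}$ by a primitive power. Ruling out the nontrivial powers is genuinely the orientation-preserving refinement: one must either invoke the realization of $\psi$ by an orientation-preserving homeomorphism and track the local rotation sense at each cone point, or, in a purely algebraic treatment, extract the rotation data at the cone points from the long relation $\prod_{j}\xi_j \prod_i [\alpha_i,\beta_i]=1$ together with a fixed orientation class, which is the technical heart of Zieschang's argument.
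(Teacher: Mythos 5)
The paper offers no proof of this statement to compare against: it is quoted directly from Zieschang [Theorem 5.8.2] with the gloss ``interpreted for orientation-preserving maps,'' and your overall route --- realize the automorphism by a homeomorphism of the orbifold and read off the peripheral data --- is exactly the mechanism behind the cited result. Your first stage is sound: the subgroups $\langle\xi_1\rangle,\dots,\langle\xi_r\rangle$ represent the $r$ distinct conjugacy classes of maximal finite cyclic subgroups of $\Gamma$, an automorphism permutes these classes, and comparing orders gives $\psi(\xi_j)=g_j\,\xi_{\pi(j)}^{k_j}\,g_j^{-1}$ with $\gcd(k_j,m_{\pi(j)})=1$ and $m_j=m_{\pi(j)}$.

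The gap is in the step you yourself identify as the crux. The Dehn--Nielsen--Baer correspondence for cocompact Fuchsian groups identifies $\out(\Gamma)$ with the \emph{extended} mapping class group of $\mathcal{O}_H$, including orientation-reversing classes --- not with the orientation-preserving mapping class group, as you assert. Hence an arbitrary automorphism need not be realizable by an orientation-preserving homeomorphism, and when it is realized by an orientation-reversing one, your own loop-tracking argument yields $\psi(\xi_j)$ conjugate to $\xi_{\pi(j)}^{-1}$, which is \emph{not} conjugate to $\xi_{\pi(j)}$ whenever $m_{\pi(j)}>2$, since conjugation within $\Gamma<\mathrm{PSL}(2,\mathbb{R})$ preserves the signed rotation angle of an elliptic element. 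Concretely, in the $(0;2,3,7)$ triangle group, conjugation by a reflection of the defining triangle is an automorphism of $\Gamma$ carrying $\xi_3$ to a conjugate of $\xi_3^{-1}\not\sim\xi_3$; here $\pi$ must be the identity because the orders $2,3,7$ are distinct, so the conclusion fails outright. Thus the statement is genuinely false for arbitrary automorphisms --- which is precisely why the paper appends the proviso ``interpreted for orientation-preserving maps'' --- and no argument can remove that hypothesis; your proposal, by resting on the misquoted identification $\out(\Gamma)\cong\mathrm{MCG}^{+}(\mathcal{O}_H)$, purports to prove the unrestricted (false) statement. Once you instead \emph{assume} that $\psi$ is induced by an orientation-preserving homeomorphism, the remainder of your argument --- the image of a positively oriented small loop about $x_j$ is a positively oriented small loop about $x_{\pi(j)}$, traversed once, and free homotopy classes of loops correspond to $\Gamma$-conjugacy classes --- is correct and completes the proof; your closing paragraph hints at this distinction, but the body of the proof needs the hypothesis made explicit and the appeal to Dehn--Nielsen--Baer corrected.
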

\noindent We will use this result in the proofs of our main theorems.

Due to Theorem \ref{thm:riemann}, one can encode a cyclic action, combinatorially, as a tuple called a \textit{cyclic data set}. 
\begin{defn}\label{defn:data_set}
A \textit{data set of degree $n$} is a tuple
$$
D = (n,g_0; (c_1,n_1),\ldots, (c_r,n_r)),
$$
where $n\geq 2$ and $g_0 \geq 0$ are integers, and each $c_j \in \Z_{n_j}^\times$ such that:
\begin{enumerate}[(i)]
\item each $n_j\mid n$,
\item $\lcm(n_1,\ldots ,\widehat{n_j}, \ldots,n_{r}) = N$, for $1 \leq j \leq r$, where $N = n$ if $g_0 = 0$,  and
\item $\displaystyle \sum_{j=1}^{r} \frac{n}{n_j}c_j \equiv 0\pmod{n}$.
\end{enumerate}
The number $g$ determined by the Riemann-Hurwitz equation
\begin{equation}\label{eqn:riemann_hurwitz}
\frac{2-2g}{n} = 2-2g_0 + \sum_{j=1}^{r} \left(\frac{1}{n_j} - 1 \right) 
\end{equation}
is called the \textit{genus} of the data set, denoted by $g(D)$.
\end{defn}

\noindent To simplify notation, if a pair $(c_j,m_j)$ in the data set $D$ occurs with multiplicity $k_j$, then we denote it by $(c_j,m_j)^{[k_j]}$. By a result of Nielsen~\cite{nielsen_periodic}, this data characterizes cyclic actions up to conjugacy. 
\begin{theorem}
\label{thm:cyc_conj_class}
The cyclic data sets of genus $g$ and degree $n$ correspond to conjugacy classes
of periodic maps of order $n$ on $S_g$.
\end{theorem}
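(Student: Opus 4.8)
The plan is to establish the stated correspondence as a genuine bijection, constructing it in both directions from Theorem~\ref{thm:riemann} and proving it is well defined on conjugacy classes and injective by means of Theorem~\ref{thm:zieschang}. Starting from a periodic $f \in \map(S_g)$ of order $n$, I would first realize $\langle f \rangle \cong \Z_n$ as a group of orientation-preserving isometries for a hyperbolic metric on $S_g$, so that $\Orb := S_g/\langle f \rangle$ is a hyperbolic orbifold of signature $(g_0; m_1,\dots,m_r)$. By Theorem~\ref{thm:riemann} there is a surface kernel epimorphism $\phi : \Gamma \to \Z_n$, where $\Gamma = \pi_1^{orb}(\Orb)$ carries the presentation (\ref{eqn:eqn1}) with $\sig(\Gamma) = (g_0; m_1,\dots,m_r)$. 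Since $\phi$ is order-preserving, each $\phi(\xi_j)$ has order $m_j$, whence $\phi(\xi_j) = f^{(n/m_j)c_j}$ for a unique $c_j \in \Z_{m_j}^\times$ recording the local rotation number at the $j$-th cone point; setting $n_j := m_j$ reads off the tuple $D = (n, g_0; (c_1,n_1),\dots,(c_r,n_r))$.

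The next step is to verify that $D$ is a data set, each axiom being a restatement of a property of $\phi$. Divisibility $n_j \mid n$ is immediate from $\phi(\xi_j) \in \Z_n$ having order $n_j$. Condition (iii), namely $\sum_j (n/n_j)c_j \equiv 0 \pmod n$, is exactly the assertion that $\phi$ kills the long relation $\prod_j \xi_j \prod_i [\alpha_i,\beta_i]$, the commutators being trivial in the abelian target $\Z_n$. Condition (ii) reformulates surjectivity of $\phi$: when $g_0 = 0$ the elements $\phi(\xi_j)$ must generate $\Z_n$, and since the long relation lets one delete any single $\xi_j$, this forces the $\lcm$ of the remaining $n_j$ to equal $n$, whereas for $g_0 > 0$ the images of the free generators $\alpha_i,\beta_i$ absorb the leftover cyclic factor. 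Finally, the genus $g(D)$ computed from (\ref{eqn:riemann_hurwitz}) coincides with $g$ by Theorem~\ref{thm:riemann}(iii).

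I would then show that $[f] \mapsto D$ descends to conjugacy classes and admits an inverse. A conjugate $hfh^{-1}$ induces an isometry of quotient orbifolds preserving cone orders and local rotation numbers, hence produces the same multiset $\{(c_j,n_j)\}$; the choice of presentation of $\Gamma$ and the labeling of the $\xi_j$ are immaterial because, by Theorem~\ref{thm:zieschang}, any automorphism of $\Gamma$ permutes the $\xi_j$ up to conjugacy, and conjugacy is invisible in the abelian group $\Z_n$. For the reverse direction, from a data set $D$ I would choose a Fuchsian group $\Gamma$ of signature $(g_0; n_1,\dots,n_r)$ (existing since the Riemann--Hurwitz value equals $g(D)$) and define $\phi : \Gamma \to \Z_n$ by $\phi(\xi_j) = f^{(n/n_j)c_j}$, sending $\alpha_i,\beta_i$ to a set completing $\phi$ to a surjection; conditions (iii), (i), and (ii) guarantee respectively that $\phi$ is well defined, order-preserving, and onto, so Theorem~\ref{thm:riemann} realizes $\Z_n$ as an action on $S_{g(D)}$ whose generator is the desired periodic map.

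The crux, and the step I expect to be the main obstacle, is that $D$ is a \emph{complete} invariant: two surface kernel epimorphisms $\phi,\phi'$ yielding the same data set must determine conjugate periodic maps. Here I would argue that, after matching cone points via the permutation from Theorem~\ref{thm:zieschang}, the two epimorphisms are related by an automorphism $\psi$ of $\Gamma$ with $\phi' = \phi \circ \psi$, and that such $\psi$ is induced by an orientation-preserving self-homeomorphism of $\Orb$ which lifts to a homeomorphism of $S_g$ conjugating one action to the other. The delicate point is to guarantee that this lift carries the chosen generator $f$ to $f$ itself rather than to some other power $f^k$: this is precisely where the rotation numbers $c_j$, and not merely the orders $n_j$, are indispensable, since preserving each $c_j$ pins down the rotation direction and hence the generator. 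Upgrading the abstract isomorphism of $\Z_n$-actions to a conjugacy fixing $f$ by tracking the local rotation data through the lift is the technical heart of Nielsen's theorem, and completing this verification would finish the proof.
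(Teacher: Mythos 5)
The paper offers no proof of this statement: it is recorded as a classical theorem and attributed to Nielsen \cite{nielsen_periodic}, with the Nielsen realization theorem invoked immediately afterwards to pass from periodic homeomorphisms to periodic mapping classes. So there is no in-paper argument to measure yours against. Your sketch follows the standard modern route: realize $\langle f\rangle$ by isometries, encode the action by a surface kernel epimorphism $\phi:\Gamma\to\Z_n$ as in Theorem~\ref{thm:riemann}, read off the pairs $(c_j,n_j)$ from $\phi(\xi_j)=f^{(n/n_j)c_j}$, check the data set axioms against the long relation and surjectivity, and invert the construction via Theorem~\ref{thm:riemann}. These parts are correct; one point worth making explicit for $g_0>0$ is that the long relation gives $n_{j_0}\mid\lcm(n_j:j\neq j_0)$ for every $j_0$, which is why the all-but-one lcm is independent of the omitted index, as condition (ii) of Definition~\ref{defn:data_set} demands.

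The genuine gap is the one you name yourself: the proposal does not establish that the data set is a \emph{complete} conjugacy invariant, and that is the substance of Nielsen's theorem rather than a routine verification. Your plan --- match the two epimorphisms by an automorphism $\psi$ of $\Gamma$ using Theorem~\ref{thm:zieschang}, realize $\psi$ by an orientation-preserving self-homeomorphism of the quotient orbifold, and lift it to $S_g$ --- is the right one, but each step is itself a nontrivial theorem: the geometric realization of type-preserving isomorphisms of cocompact Fuchsian groups (an orbifold Dehn--Nielsen--Baer statement), and the check that the lifted homeomorphism conjugates the chosen generator $f$ to $f$ rather than to some other power $f^k$, which is precisely where the units $c_j$ enter. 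Since you close by saying that ``completing this verification would finish the proof,'' what you have written is an accurate outline of the hardest direction, not a proof of it. It is nevertheless consistent with, and considerably more informative than, what the paper itself provides, which is only a citation.
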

\noindent By the Nielsen realization theorem \cite{nielsen,kerckhoff}, cyclic data sets of genus $g$ and degree $n$ are also in one-to-one correspondence with the conjugacy classes of order $n$ periodic mapping classes in $\map(S_g)$. Therefore, we represent the conjugacy class of a periodic $F\in\map(S_g)$ by $D_{F}$. 
Although, conjugacy is a natural equivalence on the set of finite group actions on surfaces, classification of the actions up to conjugacy is absent in the literature for the family of non-abelian groups due to its known complexity. However, we have defined a weaker notion of conjugacy in \cite{dey} which we record here as well.

\begin{defn}\label{defn:wk_conjugacy}
Consider $H_1,H_2 < \homeo(S_g)$ such that $H_1 \cong H_2 \cong A_n \rtimes \Z_m$. Then we say that the actions of $H_1$ and $H_2$ on $S_g$ are \textit{weakly conjugate} if there exists an isomorphism $\psi: \pi_1^{orb}(\Orb_{H_1}) \to\pi_1^{orb}(\Orb_{H_2})$ and an isomorphism $\chi : H_1 \to H_2$ such that $(\chi\circ\phi_{H_1})(g)$ is conjugate to $(\phi_{H_2}\circ\psi)(g)$ in $H_2$, whenever $g\in\pi_1^{orb}(\Orb_{H_1})$ is of finite order.
\end{defn}
\noindent In other words, the action of $H_1$ and $H_2$ on $S_g$ are weakly conjugate if the following diagram commutes up to conjugacy at the level of torsion elements.
\begin{center}\label{fig:wkconjugacy}
\begin{tikzcd}[sep=12mm,
arrow style=tikz,
arrows=semithick,
diagrams={>={Straight Barb}}
                ]
\pi_1^{orb}(\Orb_{H_1}) \dar["\phi_{H_1}" '] \rar["\psi",""name=U]
        & \pi_1^{orb}(\Orb_{H_2}) \dar["\phi_{H_2}"]      \\
H_1\rar["\chi"  ,""name=D] & H_2
                     \ar[to path={(U) node[pos=.5,C] (D)}]{}
    \end{tikzcd}
\captionof{figure}{The weak conjugacy of the $H_i$-actions on $S_g$.}
\end{center}

\subsection{Liftability of periodic mapping classes under alternating covers} Consider an $H$-action on $S_g$, where $H \cong A_n$, and the induced branched cover $p: S_g \to \Orb_H (:= S_g/H)$. Suppose $E_{n,m}$ is an extension of $\Z_m$ by $A_n$, i.e., $A_n \lhd E_{n,m}$ and $E_{n,m}/A_n \cong \Z_m$. Then every $E_{n,m}$-action on $S_g$ is realized by lifting an order $m$ cyclic subgroup of $\map(\Orb_H)$ under $p$. A natural question in this context is when does a periodic mapping class lift under the branched cover $p$. In this paper, we give an answer to this question, up to conjugacy, through classifying weak conjugacy classes of $E_{n,m}$-action on $S_g$. 

Note that the mapping classes in $\map(\Orb_H)$ are represented by homeomorphisms that preserve the set of cone points in $\Orb_H$ and their orders. The Birman-Hilden theorem~\cite{birman,margalit-winarski} asserts that the sequence 
\begin{equation}
\label{eqn:eqn4}
1 \to H \to \mathrm{SMod}(S_g) \to \mathrm{LMod}(\Orb_H) \to 1
\end{equation}
is exact. For a periodic mapping class $\overline{G} \in \mathrm{LMod}(\Orb_H)$, the above short exact sequence restricts abstractly to $$1 \to A_n \to E_{n,m} \to \Z_m \to 1.$$

Our goal is to characterize weak-conjugacy classes of $E_{n,m}$-actions on $S_g$. Before going into the main section, we want to emphasize some properties of $E_{n,m}$ groups  which will be crucial in our context, but whose proofs are not readily available in literature.
\subsection{Properties of $E_{n,m}$ groups}
Since $Z(A_n)=\{id\}$, where $id$ is the identity permutation, it is known (see \cite[Chapter 11]{robinson}) that when $n\geq 5$ and $n \neq 6$, the isomorphism classes of the extensions of $\Z_m$ by $A_n$ are in bijective correspondence with the number of homomorphisms (also known as coupling) $\Phi: \Z_m \to \out(A_n)$. Hence one can conclude the following.
\begin{prop}\label{prop:extension}For $n\geq5$, $n\neq6$ and $m\geq2$, the short exact sequence: $1\to A_n \to E_{n,m} \to \Z_m \to 1$ always splits. Furthermore:
\begin{enumerate}[(i)]
\item  If $m$ is odd, $E_{n,m}$ is isomorphic to $A_n \times \Z_m$.
\item  If $m$ is even, $E_{n,m}$ is isomorphic to either $A_n \times \Z_m$ or $A_n \rtimes_{\phi} \Z_m$,
where $\varphi:\Z_m \to \aut(A_n)$ is a homomorphism determined by $\bar{1} \mapsto \chi_{(1~2)}$ with $\chi_{(1~2)}:=\sigma \mapsto (1~2) \sigma (1~2)$.
\end{enumerate}
\end{prop}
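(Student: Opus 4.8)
The plan is to reduce the entire statement to the classification of couplings $\Phi\colon \Z_m \to \out(A_n)$ granted in the paragraph preceding the proposition, combined with two explicit computations. First I would recall the classical fact that for $n \geq 5$ with $n \neq 6$ one has $\aut(A_n) \cong \Sigma_n$ and $\out(A_n) \cong \Z_2$, the quotient map $\aut(A_n)\to\out(A_n)$ being $\Sigma_n \to \Sigma_n/A_n \cong \Z_2$ under the identification $\mathrm{Inn}(A_n)\cong A_n$; this is precisely the point at which $n=6$ must be excluded, since there $\out(A_6)\cong\Z_2\times\Z_2$. Because $Z(A_n)=\{\mathrm{id}\}$, the cited correspondence tells us that the isomorphism class of $E_{n,m}$ is determined by its coupling $\Phi\in\mathrm{Hom}(\Z_m,\Z_2)$, so it suffices to enumerate these homomorphisms and realize each of them by an explicit \emph{split} extension.

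The enumeration is immediate: $\mathrm{Hom}(\Z_m,\Z_2)$ consists only of the trivial homomorphism when $m$ is odd, and of the trivial one together with the surjection $\bar1\mapsto 1$ when $m$ is even. I would then match each coupling with a concrete semidirect product realizing it. The direct product $A_n\times\Z_m$ has trivial coupling, since conjugation by the $\Z_m$-factor acts trivially on $A_n$. For the nontrivial coupling, available only when $m$ is even, I take $\varphi\colon\Z_m\to\aut(A_n)$ defined by $\bar1\mapsto\chi_{(1\,2)}$; as $\chi_{(1\,2)}^2=\chi_{(1\,2)^2}=\mathrm{id}$ and $2\mid m$, this is a well-defined homomorphism, and since $(1\,2)\notin A_n$ its image in $\out(A_n)$ is the nontrivial class, so the coupling of $A_n\rtimes_\varphi\Z_m$ is exactly the surjection onto $\Z_2$.

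To conclude, I would invoke the granted bijection once more: each coupling is realized by one of these split extensions, and there is a unique isomorphism class of extension per coupling, so every $E_{n,m}$ must be isomorphic to one of the exhibited semidirect products. In particular the sequence $1\to A_n\to E_{n,m}\to\Z_m\to1$ always splits. Reading off the cases then yields (i) for $m$ odd and (ii) for $m$ even.

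The step I expect to carry the most weight is the logical passage from ``each coupling is realized by some split extension'' to ``every extension splits.'' This is not a computation but a careful use of the hypothesis $Z(A_n)=1$: it is exactly the vanishing of $Z(A_n)$ that makes the coupling a complete invariant, collapsing the classifying set of extensions with a fixed coupling to a single class, so that exhibiting one split representative forces that class to consist of split extensions. I would state this dependence explicitly rather than leaving it buried in the cited correspondence, and I would double-check the order computation for $\chi_{(1\,2)}$ so that the lift $\varphi$ is genuinely defined on all of $\Z_m$.
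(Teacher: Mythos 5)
Your proposal is correct and takes essentially the same route as the paper, which derives the proposition directly from the fact that, since $Z(A_n)=\{\mathrm{id}\}$, isomorphism classes of extensions of $\Z_m$ by $A_n$ correspond bijectively to couplings $\Phi:\Z_m\to\out(A_n)\cong\Z_2$; the paper leaves the enumeration of $\mathrm{Hom}(\Z_m,\Z_2)$ and the explicit split realizations implicit, whereas you spell them out. Your added emphasis on why $Z(A_n)=1$ makes the coupling a complete invariant (hence why a single split representative forces every extension with that coupling to split) is a faithful elaboration of the paper's argument, not a departure from it.
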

\noindent We add a flag variable $i\in \{0,1\}$, and assume that $\E$ is the direct product $A_n \times \Z_m$ if $i=0$, the semi-direct product $A_n \rtimes_{\varphi} \Z_m$ if $i=1$. To compute the automorphism group of $\E$, we use the result \cite[Theorem 1]{curran}, which gives the subgroup of the full automorphism group of a semi-direct product that fixes the normal component of the semi-direct product. It is easy to check that $A_n$ is the commutator subgroup of $\E$ thus, $A_n$ is characteristic. Therefore, in our context,  the subgroup mentioned in \cite{curran} coincides with the full automorphism group and we have the following result.

\begin{prop}\label{aut_semidirect}
For $n\geq5$, $n\neq6$ and $m\geq2$, we have the following isomorphisms of groups.
\begin{enumerate}[(i)]
\item Let $\beta:\Z_m\to A_n$ be defined by $$\beta(\bar{x})=\begin{cases}
    \beta(\bar{0})=id, & \text{if } x \equiv y \pmod{m}, \text{ where } 1\leq y \leq m-1 \text{ is even, and } \\
    \tau_\alpha (1\,2) \tau_\alpha^{-1} (1\, 2), & \text{ otherwise, where } \alpha \text{ is the conjugation map by } \tau_\alpha\in\Sigma_n.
    \end{cases}$$ Then
$\aut(A_n \rtimes_{\varphi} \Z_m)\cong
\Biggl\{
\begin{pmatrix}
\alpha & \beta\\
0 & \delta
\end{pmatrix}
:
\begin{matrix*}[l]
\alpha \in \mathrm{Aut}(A_n), \delta \in \mathrm{Aut}(\Z_m)
\end{matrix*}
\Biggr\}.$
\item $\mathrm{Aut}(A_n \times \Z_m) \cong
\Biggl\{
\begin{pmatrix}
\alpha & 0 \\
0 & \delta
\end{pmatrix}
:
\begin{matrix}
\alpha \in \mathrm{Aut}(A_n),\\ \delta \in \mathrm{Aut}(\Z_m)
\end{matrix}
\Biggr\}\cong \mathrm{Aut}(A_n) \times \mathrm{Aut}(\Z_m) \cong \Sigma_n\times\Z_m^{\times}.$
\end{enumerate}
\end{prop}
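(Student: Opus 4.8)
The plan is to apply Curran's description \cite{curran} of the automorphisms of a semidirect product that fix the normal factor, using the observation already recorded above that $A_n = [\E,\E]$ is the commutator subgroup and hence characteristic in $\E$. Consequently every $\theta \in \aut(\E)$ maps $A_n$ onto itself, so it restricts to an automorphism $\alpha := \theta|_{A_n} \in \aut(A_n)$ and descends to an automorphism $\delta \in \aut(\Z_m)$ of the quotient $\E/A_n \cong \Z_m$. Writing elements of $\E$ as $a z^{k}$ with $a\in A_n$ and $z$ generating the $\Z_m$-complement, the remaining freedom is a map $\beta\colon\Z_m\to A_n$ with $\theta(z^{k})=\beta(\bar k)\,z^{\delta(k)}$, so that $\theta(a z^{k})=\alpha(a)\,\beta(\bar k)\,z^{\delta(k)}$. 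The lower-left entry of the associated matrix $\begin{pmatrix}\alpha & \beta\\ 0 & \delta\end{pmatrix}$ vanishes precisely because $\theta$ preserves $A_n$; thus by \cite{curran} the full automorphism group is the group of such upper-triangular matrices, subject to the compatibility conditions that make $\theta$ a homomorphism. The whole problem therefore reduces to determining which triples $(\alpha,\beta,\delta)$ are admissible.

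For part (ii) I would first note the additional rigidity coming from the center. Since $Z(A_n)=\{id\}$ for $n\geq 5$, the center of $A_n\times\Z_m$ equals $\{id\}\times\Z_m\cong\Z_m$, so the $\Z_m$-factor is also characteristic. Hence any $\theta$ preserves both direct factors and splits as $\theta = \alpha\times\delta$, forcing $\beta=0$, which gives $\aut(A_n\times\Z_m)\cong\aut(A_n)\times\aut(\Z_m)$ at once. It then remains to identify the factors: for $n\geq 4$ with $n\neq 6$ the conjugation map $\Sigma_n\to\aut(A_n)$, $\sigma\mapsto\chi_\sigma$, is an isomorphism (its kernel $C_{\Sigma_n}(A_n)$ is trivial, and $|\aut(A_n)|=|\mathrm{Inn}(A_n)|\,|\out(A_n)|=(n!/2)\cdot 2=|\Sigma_n|$), while $\aut(\Z_m)\cong\Z_m^{\times}$, yielding the stated $\Sigma_n\times\Z_m^{\times}$.

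For part (i) the $\Z_m$-factor is no longer central, so $\beta$ need not vanish and must be computed. Here I would write $\alpha=\chi_{\tau_\alpha}$ for a unique $\tau_\alpha\in\Sigma_n$ (uniqueness again from $C_{\Sigma_n}(A_n)=\{id\}$), record that $\varphi_{\bar k}=\chi_{(1\,2)}^{\,k}=\chi_{(1\,2)^{k}}$ and $\delta(\bar k)=\overline{ck}$ with $c=\delta(\bar 1)\in\Z_m^{\times}$, and impose the conjugation-compatibility condition from \cite{curran}, namely $\alpha\circ\varphi_{\bar k}=\mathrm{inn}_{\beta(\bar k)}\circ\varphi_{\delta(\bar k)}\circ\alpha$ for all $k$. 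Passing to the corresponding elements of $\Sigma_n$ and using the injectivity of $\sigma\mapsto\chi_\sigma$, this single identity forces $\beta(\bar k)=\tau_\alpha\,(1\,2)^{k}\,\tau_\alpha^{-1}\,(1\,2)^{\delta(k)}$. Because $m$ is even (Proposition~\ref{prop:extension}), the unit $c$ is odd, so $\delta(k)\equiv k\pmod 2$; this both guarantees $\beta(\bar k)\in A_n$ and collapses the formula to $\beta(\bar k)=id$ when $k$ is even and $\beta(\bar k)=\tau_\alpha(1\,2)\tau_\alpha^{-1}(1\,2)$ when $k$ is odd, which is exactly the map $\beta$ in the statement. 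Thus $\beta$ is uniquely determined by $(\alpha,\delta)$, and the assignment $\theta\mapsto(\alpha,\delta)$ is a bijection onto $\aut(A_n)\times\aut(\Z_m)$.

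The step I expect to require the most care is the converse: verifying that each triple $(\alpha,\beta,\delta)$ produced above genuinely defines an automorphism, i.e.\ that the forced $\beta$ satisfies the crossed-homomorphism identity $\beta(\overline{k_1+k_2})=\beta(\bar k_1)\,\varphi_{\delta(\bar k_1)}\!\big(\beta(\bar k_2)\big)$, after which bijectivity of $\theta$ follows from the five lemma since $\theta$ induces isomorphisms on $A_n$ and on $\E/A_n$. This reduces, using $(1\,2)^2=id$ and the parity identity $\delta(k)\equiv k\pmod 2$, to a short case check on the parities of $k_1$ and $k_2$; in each case both sides equal $\tau_\alpha(1\,2)^{\,k_1+k_2}\tau_\alpha^{-1}(1\,2)^{\,k_1+k_2}$. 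Granting this, composition of automorphisms corresponds to the natural (non-abelian) multiplication of the upper-triangular matrices, so the bijection of the previous paragraph is a group isomorphism, completing the identification of $\aut(A_n\rtimes_{\varphi}\Z_m)$ with the stated matrix group.
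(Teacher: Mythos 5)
Your proposal is correct and follows essentially the same route as the paper: both invoke Curran's description of $\aut(A_n\rtimes_\varphi\Z_m)$ as upper-triangular matrices (using that $A_n=[\E,\E]$ is characteristic) and then pin down $\beta$ from the compatibility conditions together with $Z(A_n)=\{id\}$. The only differences are cosmetic — you derive $\beta(\bar k)$ for all $k$ directly from the conjugation condition and the parity of $\delta$, where the paper computes $\beta(\bar 1)$ and then iterates the cocycle identity; for part (ii) you use that $\{id\}\times\Z_m$ is the center (hence characteristic) where the paper notes $\beta$ becomes a homomorphism into $Z(A_n)$; and you add an explicit sufficiency check that the paper leaves to Curran's theorem.
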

\begin{proof}
For elements $x,y$ in a group $G$, let $x^y$ denote the element $yxy^{-1}$. By \cite[Theorem 1]{curran}, $\beta$ satisfies the following conditions:
\begin{enumerate}[(a)]
\item $\beta(\bar{x}+\bar{y})=\beta(\bar{x})\beta(\bar{y})^{\delta(\bar{x})}$, for all $\bar{x},\bar{y} \in \Z_m$, and 
\item $\alpha(\sigma^{\bar{x}})=\alpha(\sigma)^{\beta(\bar{x})\delta(\bar{x})},$ for all $\sigma\in A_n$ and $\bar{x} \in \Z_m.$
\end{enumerate}
Taking $\bar{x}=\bar{y}=\bar{0}$, condition (a) implies $\beta(\bar{0})=id$. Since $\mathrm{Aut}(A_n) \cong \Sigma_n$ and $\mathrm{Aut}(\Z_m) \cong \Z_m^{\times}$, any $\alpha\in \mathrm{Aut}(A_n)$ can be identified with the conjugation map by some $\tau_\alpha \in \Sigma_n$ and any $\delta \in \mathrm{Aut}(\Z_m)$ with the multiplication by some $\ell$ such that $(\ell,m)=1$. Replacing $\bar{x}$ by $\bar{1}$ in condition (b), we get 
$$\begin{array}{lrcl}
       &\alpha(\sigma^{\bar{1}})
       &=&\alpha(\sigma)^{\beta(\bar{1})\delta(\bar{1})}, ~\forall \sigma \in A_n,\\
     \iff & \alpha(\phi(\bar{1}) \sigma) = \alpha((1~2)\sigma(1\,2))  &=& \beta(\bar{1}) (1\,2) \cdot\alpha(\sigma)\cdot (1~2) \beta(\bar{1})^{-1}, \,\forall \sigma \in A_n,\\
      \iff & \tau_\alpha(1\,2)\sigma(1\, 2)\tau_\alpha^{-1}  &=& \beta(\bar{1}) (1~2) \cdot\tau_\alpha\sigma\tau_\alpha^{-1}\cdot (1\,2) \beta(\bar{1})^{-1}, \,\forall \sigma \in A_n, \text{ and}\\
      \iff & \sigma  &=& (1\,2)\tau_\alpha^{-1}\beta(\bar{1}) (1\,2) \tau_\alpha \cdot \sigma \cdot\tau_\alpha^{-1}(1\,2) \beta(\bar{1})^{-1}\tau_\alpha(1\,2), \, \forall \sigma \in A_n.
     \end{array}$$
Hence $(1\,2)\tau_\alpha^{-1}\beta(\bar{1}) (1\,2) \tau_\alpha \in Z(A_n)=\{id\}$ implies that  $(1\,2)\tau_\alpha^{-1}\beta(\bar{1}) (1\,2) \tau_\alpha=id$. Thus $\beta(\bar{1})=\tau_\alpha (1\, 2) \tau_\alpha^{-1} (1~2)$. By condition (a) we have

$$\beta(\bar{2})=\beta(\bar{1}+\bar{1})=\beta(\bar{1})\beta(\bar{1})^{\delta(\bar{1})}=\beta(\bar{1})(1\, 2)\beta(\bar{1})(1\,2)=id.$$
Similarly,
$$\beta(\bar{3})=\beta(\bar{1}+\bar{2})=\beta(\bar{1})\beta(\bar{2})^{\delta(\bar{1})}=\beta(\bar{1}).$$
Therefore, by applying condition (a) inductively, we have
$$\beta(\bar{x})=\begin{cases}
    \beta(\bar{0})=id, & \text{if }x \equiv y \pmod{m}, \text{ where } 1\leq y \leq m-1 \text{ is even, and } \\
    \tau_\alpha (1\,2) \tau_\alpha^{-1} (1\, 2), & \text{otherwise.}
    \end{cases}$$
which proves (i). 

For (ii), note that, conditions (a) and (b) imply that $\beta:\Z_m\to A_n$ is a homomorphism with $\beta(\Z_m)\subseteq Z(A_n)=\{id\}$. Therefore $\beta=0$ (trivial) and we have our desired result.

\end{proof}

\begin{rem}
It is worth mentioning that the groups $\aut(A_n \rtimes_{\varphi} \Z_m)$ and $\aut(A_n \times \Z_m)$ are isomorphic via the map:
$$ \begin{pmatrix}
\alpha & \beta\\
0 & \delta
\end{pmatrix} \mapsto
\begin{pmatrix}
\alpha & 0\\
0 & \delta
\end{pmatrix}.   $$
\end{rem}

\noindent Now we show that the groups $\E$ are two-generated. First we need the following lemma from \cite{miller}.
\begin{lem}\label{lem:generators} If $\ell_1,\ell_2$ represent a pair of numbers, each being greater than unity, such that neither exceeds $n$ but their sum exceeds $n$, then it is always possible to find two cycles of
orders $\ell_1$ and $\ell_2$ respectively such that they generate the alternating group of degree $n$ whenever both $\ell_1$ and $\ell_2$ are odd.
\end{lem}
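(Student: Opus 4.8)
The plan is to reduce the statement to an application of Jordan's classical theorem: \emph{a primitive permutation group of degree $n$ that contains a $3$-cycle must contain $A_n$}. The parity hypothesis makes half the work automatic, since a cycle of odd order is an even permutation. Thus any $\ell_1$-cycle $\sigma_1$ and $\ell_2$-cycle $\sigma_2$ with $\ell_1,\ell_2$ odd automatically generate a subgroup $G:=\langle\sigma_1,\sigma_2\rangle\leq A_n$, and it suffices to \emph{choose} them so that $G$ is transitive, is primitive, and contains a $3$-cycle; Jordan's theorem then gives $G\supseteq A_n$, whence $G=A_n$. Throughout I assume WLOG $\ell_1\leq\ell_2$, and set $k:=\ell_1+\ell_2-n\geq1$, the forced size of the overlap of the two supports; the only place the hypothesis $\ell_1+\ell_2>n$ enters is in guaranteeing this overlap.

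For the construction and transitivity, I would take $\sigma_1=(1\,2\,\cdots\,\ell_1)$ and pick $\sigma_2$ to be an $\ell_2$-cycle supported on $\{\ell_1-k+1,\dots,\ell_1\}\cup\{\ell_1+1,\dots,n\}$, interleaved so that $\sigma_1\sigma_2$ is a single long cycle. When $n$ is odd one can arrange $\sigma_1\sigma_2=c:=(1\,2\,\cdots\,n)$: a transitive factorization of an $n$-cycle into an $\ell_1$-cycle and an $\ell_2$-cycle exists exactly when $\ell_1+\ell_2-n$ is odd (the associated Hurwitz genus $(\ell_1+\ell_2-n-1)/2$ is then a nonnegative integer), which holds here because $\ell_1+\ell_2$ is even and $n$ is odd. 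When $n$ is even an $n$-cycle is an odd permutation and cannot lie in $A_n$, so instead I arrange $\sigma_1\sigma_2$ to be an $(n-1)$-cycle while keeping the remaining point tied in through the overlap. In both cases the supports of $\sigma_1,\sigma_2$ cover $\{1,\dots,n\}$ and meet, so $G$ is transitive. Once a full cycle $c\in G$ is in hand, a short computation produces the required short cycle: $c\sigma_1c^{-1}=(2\,3\,\cdots\,\ell_1+1)\in G$, and $(c\sigma_1c^{-1})\,\sigma_1^{-1}=(1\,\ell_1+1\,2)$ is a $3$-cycle (using $\ell_1\geq3$, valid since $\ell_1$ is odd and exceeds $1$).

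The hard part will be primitivity. Suppose $\mathcal P$ is a nontrivial $G$-invariant partition. Since $c\in G$, the partition $\mathcal P$ is $\langle c\rangle$-invariant, hence a congruence system whose blocks are arithmetic progressions of a fixed difference $\delta=n/s>1$. Because $\sigma_1$ fixes the nonempty set $\{\ell_1+1,\dots,n\}$ pointwise while moving $\{1,\dots,\ell_1\}$ as one cycle, every block containing a fixed point is $\sigma_1$-invariant; as the support of $\sigma_1$ is a single $\sigma_1$-cycle, one forces each block to lie entirely inside or entirely outside that support, so the support is a union of blocks. Comparing the block-difference $n/s$ dictated by $c$ with the difference $\ell_1/s$ that $\sigma_1$ imposes on its support then yields $\ell_1=n$, contradicting $\ell_1<n$. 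Hence $G$ is primitive. (If $\ell_1=\ell_2=n$, which can occur only for $n$ odd, both generators are full cycles and the statement is checked directly; for $n$ even the same primitivity conclusion is extracted from the $(n-1)$-cycle together with the element moving the last point.)

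Finally I would invoke Jordan's theorem to conclude $G\supseteq A_n$, and combine it with the parity observation $G\leq A_n$ to get $G=A_n$; the finitely many small degrees (in practice $n=5$, where the relevant pairs $(\ell_1,\ell_2)$ are $(3,3),(3,5),(5,5)$) are verified by hand. I expect the primitivity step to be the genuine obstacle, since it is the point at which the overlap forced by $\ell_1+\ell_2>n$ and the parity of $n$ both enter in an essential way; the transitivity, the production of a $3$-cycle, and the reduction $G\leq A_n$ are all either automatic from the parity hypothesis or short explicit computations.
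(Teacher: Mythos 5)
First, for context: the paper offers no proof of this lemma at all --- the statement is quoted essentially verbatim from Miller \cite{miller}, whose original 1928 argument is an explicit constructive one, so your Jordan-theorem route is necessarily different from the source. The question is therefore only whether your argument is complete, and as written it is not.

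The central gap is that your two concrete computations --- the production of the $3$-cycle $(1\ \ell_1{+}1\ 2)$ and the block analysis for primitivity --- both require $\sigma_1=(1\,2\,\cdots\,\ell_1)$ and $c=\sigma_1\sigma_2=(1\,2\,\cdots\,n)$ to hold \emph{simultaneously}. If both normal forms hold, then $\sigma_2=\sigma_1^{-1}c$ is forced to equal $(\ell_1\ \ell_1{+}1\ \cdots\ n)$, a cycle of length $n-\ell_1+1$; so your explicit verifications cover only the case $\ell_2=n-\ell_1+1$, i.e.\ overlap $k=1$ (and hence $n$ odd). For every other admissible pair you fall back on the bare existence of a transitive factorization of an $n$-cycle (resp.\ $(n-1)$-cycle) into cycles of the prescribed lengths --- itself a nontrivial uncited theorem of Boccara --- and for such a factorization $\sigma_1$ is a scattered $\ell_1$-cycle in the labelling where $c=(1\,2\,\cdots\,n)$, so neither the identity $(c\sigma_1c^{-1})\sigma_1^{-1}=(1\ \ell_1{+}1\ 2)$ nor the claim that $\mathrm{supp}(\sigma_1)$ is the interval $\{1,\dots,\ell_1\}$ while the blocks are residue classes mod $n/s$ is available. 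The primitivity step, which you yourself flag as the crux, ends with the unproved assertion that ``comparing the block-difference \dots yields $\ell_1=n$'' and also omits the case of a block that meets $\mathrm{Fix}(\sigma_1)$ and contains all of $\mathrm{supp}(\sigma_1)$ (easy to exclude, but it must be said); when one tries to run the comparison in a labelling-free way, the conclusion that $\mathrm{supp}(\sigma_1)$ is a union of blocks does not by itself produce a contradiction. Together with the undischarged subcases $n$ even (where no full cycle lies in $A_n$ and the $3$-cycle computation is never redone for the $(n-1)$-cycle) and $\ell_1=\ell_2=n$ (``checked directly'' with no check supplied), the proposal is a reasonable strategy but not yet a proof: the parity reduction $G\le A_n$ and the appeal to Jordan's theorem are fine, while the transitive-factorization input, the $3$-cycle, and above all the primitivity must each be established for a \emph{single, explicitly fixed} pair $(\sigma_1,\sigma_2)$ valid for all overlaps $k\ge 1$.
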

\begin{prop}\label{prop:gen_direct}
The group $A_n\times\Z_m$ is two-generated.
\end{prop}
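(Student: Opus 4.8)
The plan is to produce two explicit generators of $A_n \times \Z_m$, exploiting the fact that the $\Z_m$-factor is cyclic and that $A_n$ is generated by two elements of a controlled structure. The overall strategy is to reduce the problem to Lemma~\ref{lem:generators} by choosing a pair of \emph{odd}-order cycles in $A_n$ whose orders $\ell_1,\ell_2$ satisfy the hypotheses of that lemma, and then to couple one of these generators with a generator of $\Z_m$ so that the cyclic component is reached as a suitable power.

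First I would fix a generator $\bar{1}$ of $\Z_m$ and seek elements of the form $x_1 = (\sigma_1, \bar{a})$ and $x_2 = (\sigma_2, \bar{b})$ in $A_n \times \Z_m$, where $\sigma_1,\sigma_2 \in A_n$ are cycles of odd orders $\ell_1,\ell_2$ chosen by Lemma~\ref{lem:generators} to generate $A_n$ (for instance, taking $\ell_1,\ell_2$ to be odd, each at most $n$, with $\ell_1+\ell_2>n$; such a pair exists for every $n\geq 5$, e.g.\ an $(n-1)$-cycle together with a $3$-cycle, adjusting parities as needed). The subgroup $\langle x_1,x_2\rangle$ projects onto all of $A_n$ in the first coordinate. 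The key arithmetic step is to arrange that the $\Z_m$-coordinates $\bar{a},\bar{b}$ generate $\Z_m$ as well, while simultaneously allowing us to isolate an element of the subgroup whose first coordinate is the identity and whose second coordinate generates $\Z_m$.

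The main mechanism I would use is the following: since $\sigma_1$ has odd order $\ell_1$, the element $x_1^{\ell_1} = (\mathrm{id}, \ell_1\bar{a})$ lies in $\langle x_1\rangle$ and is supported entirely in the $\Z_m$-factor. Thus if I choose $\bar{a}$ so that $\gcd(\ell_1 a, m)$ is as small as possible---ideally so that $\ell_1\bar a$ generates $\Z_m$---then $\langle x_1,x_2\rangle$ contains the full factor $\{\mathrm{id}\}\times\Z_m$. Once this factor is in the subgroup, multiplying $x_1$ and $x_2$ by elements of $\{\mathrm{id}\}\times\Z_m$ lets me recover $(\sigma_1,\bar 0)$ and $(\sigma_2,\bar 0)$, whence $A_n\times\{\bar 0\}\subseteq \langle x_1,x_2\rangle$ by Lemma~\ref{lem:generators}. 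Together these two subgroups generate the whole direct product. The only subtlety is guaranteeing the existence of $\bar a$ with $\gcd(\ell_1 a,m)=\gcd(\ell_1,m)\cdot(\text{unit})$ reaching a generator; when $\gcd(\ell_1,m)>1$ one instead combines the cyclic contributions of $x_1^{\ell_1}$ and $x_2^{\ell_2}$, using that $\gcd(\ell_1,\ell_2,m)$ can be driven to a divisor that, after taking an appropriate integer combination of $\ell_1\bar a$ and $\ell_2\bar b$, yields a generator of $\Z_m$.

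I expect the main obstacle to be precisely this arithmetic coordination: ensuring that one can simultaneously (i) pick odd cycle-orders $\ell_1,\ell_2$ meeting the hypotheses of Lemma~\ref{lem:generators}, and (ii) select the $\Z_m$-coordinates so that the induced powers generate $\Z_m$, regardless of the common factors between $\ell_1,\ell_2$ and $m$. The cleanest way around this is to exploit the freedom in choosing $\bar a,\bar b$: one can always take, say, $\bar b=\bar 1$ so that the projection to $\Z_m$ is already surjective through $x_2$, and then the odd-order power $x_2^{\ell_2}=(\mathrm{id},\ell_2\bar 1)$ generates $\Z_m$ whenever $\gcd(\ell_2,m)=1$; when this gcd is nontrivial one supplements with $x_1^{\ell_1}$ to reach $\gcd(\ell_1,\ell_2)$-combinations. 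Since $\ell_1,\ell_2$ can be chosen coprime (e.g.\ consecutive admissible odd values), their combination generates $\Z_m$, and the argument closes. I would present the coprime choice as the main case and handle the surjectivity onto $A_n$ via a direct invocation of Lemma~\ref{lem:generators}.
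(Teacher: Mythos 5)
Your proposal is correct and, once the hedging resolves, follows essentially the same route as the paper: pick two odd, coprime cycle-orders satisfying Lemma~\ref{lem:generators} (the paper uses $n-2,n$ for $n$ odd and $n-3,n-1$ for $n$ even), pair each cycle with $\bar{1}\in\Z_m$, use the odd-order powers $(\mathrm{id},\overline{\ell_1}),(\mathrm{id},\overline{\ell_2})$ together with $\gcd(\ell_1,\ell_2)=1$ to obtain $\{\mathrm{id}\}\times\Z_m$, and then recover $(\sigma_i,\bar{0})$. The detours about minimizing $\gcd(\ell_1 a,m)$ are unnecessary once the coprime choice is made, but the final argument is sound.
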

\begin{proof}
For $n$ odd, applying Lemma \ref{lem:generators}, there exist generators $\sigma_1,\sigma_2$ of $A_n$ of orders $n-2,n$, respectively. Then, we know that $A_n\times\Z_m$ is generated by $(\sigma_1,\bar{0}),(\sigma_2,\bar{0})$, and $(id,\bar{1})$. Now, consider the elements $(\sigma_1,\bar{1})$ and $(\sigma_2,\bar{1})$ in $A_n\times\Z_m$. Note that $(\sigma_1,\bar{1})^{n-2}=(id,\overline{n-2})$ and $(\sigma_2,\bar{1})^{n}=(id,\overline{n})$. Moreover, $\langle \overline{n-2}, \bar{n}\rangle =\Z_m$ since $(n-2,n)=1$. Hence there exist integers $k$ and $\ell$ such that $(id,\overline{n-2})^k \cdot (id,\overline{n})^{\ell}=(id,\bar{1})$. Furthermore, $(\sigma_1,\bar{1}) \cdot (id,-\bar{1})=(\sigma_1,\bar{0})$ and $(\sigma_2,\bar{1}) \cdot (id,-\bar{1})=(\sigma_2,\bar{0})$. Thus, $(\sigma_1,\bar{1})$ and $(\sigma_2,\bar{1})$ also generate $A_n\times\Z_m$.

For $n$ even, applying Lemma \ref{lem:generators}, there exist generators $\sigma_1,\sigma_2$ of $A_n$ of orders $n-3,n-1$, respectively. By similar arguments, as above, one can show that $(\sigma_1,\bar{1})$ and $(\sigma_2,\bar{1})$ also generate $A_n\times\Z_m$. Hence, $A_n\times\Z_m$ is two-generated.
\end{proof}

To show that the group $E_{n,m}^1$ is two-generated, we need the following elementary number-theoretic lemma, which is a consequence of \cite[Corollary 3]{BL} for $n \geq 21$ and for lower values of $n$, it can be deduced by a direct computation.
\begin{lem}\label{lem:primes}
For $n\geq 5$, there are at least two distinct primes $p_1$ and $p_2$ such that $n \leq p_1 < p_2 < 2n$.
\end{lem}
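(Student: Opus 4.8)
The plan is to split the argument according to the size of $n$, treating a large range with an explicit prime-counting estimate and a small range by direct verification. First, for $n \geq 21$, I would invoke \cite[Corollary 3]{BL}, which supplies an explicit lower bound for the number of primes lying in an interval of the form $[n, 2n)$. In the range $n \geq 21$ this bound already exceeds $1$, so at least two distinct primes occur, yielding $p_1 < p_2$ with $n \leq p_1 < p_2 < 2n$ as required. Thus the entire large-$n$ regime reduces to a single citation, with no further estimation needed.

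For the remaining values $5 \leq n \leq 20$, I would simply enumerate the primes in each interval $[n,2n)$ and check that at least two appear. For instance, $[5,10)$ contains $5,7$; $[6,12)$ contains $7,11$; $[8,16)$ contains $11,13$; and the count only grows as $n$ increases within this finite range. Since there are finitely many cases, this is a routine and completely mechanical check that I would record compactly.

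The one point requiring genuine care is the half-open nature of the interval $[n,2n)$. Because $2n$ is even for every $n \geq 2$, the right endpoint is never prime and never contributes, so the primes in $[n,2n)$ coincide with those in $(n,2n)$ except possibly for $n$ itself when $n$ is prime. This subtlety is decisive exactly at the lower boundary of the small range: for $n = 5$ the only prime strictly between $5$ and $10$ is $7$, and the second prime is the endpoint $n = 5$. It is also the reason to take the threshold for \cite[Corollary 3]{BL} conservatively at $n \geq 21$ rather than pushing it lower. I do not expect any real obstacle beyond this endpoint bookkeeping, since the statement is only a mild strengthening of Bertrand's postulate and follows at once once the two ranges are combined.
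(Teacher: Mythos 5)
Your proposal matches the paper's argument exactly: the paper likewise obtains the statement from \cite[Corollary 3]{BL} for $n \geq 21$ and disposes of $5 \leq n \leq 20$ by direct computation, with the same endpoint case $p_1 = n = 5$ handled by the weak inequality $n \leq p_1$. The approach and the split point are identical, so there is nothing further to compare.
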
 

\begin{prop}\label{prop:gen_semidirect}
$A_n\rtimes_{\varphi}\Z_{m}$ is two-generated.
\end{prop}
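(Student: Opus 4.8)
The key structural observation is that the coupling $\varphi\colon\Z_m\to\aut(A_n)$, $\bar1\mapsto\chi_{(1\,2)}$, is a well-defined homomorphism only when $m$ is even (as $\chi_{(1\,2)}$ has order two in $\aut(A_n)\cong\Sigma_n$), so throughout the argument I may assume $m$ is even. My plan is to realize $A_n\rtimes_\varphi\Z_m$ as an explicit index-two subgroup of $\Sigma_n\times\Z_m$ and then two-generate it by a short application of Goursat's lemma; this deliberately sidesteps the twisting, which is the source of difficulty if one instead tries to reconstruct the $\Z_m$-factor by raising explicit generators to powers.

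First I would verify that the map
\[
\Theta\colon A_n\rtimes_\varphi\Z_m\longrightarrow \Sigma_n\times\Z_m,\qquad (\sigma,\bar x)\longmapsto\bigl(\sigma\,(1\,2)^{x},\ \bar x\bigr),
\]
where $(1\,2)^x$ denotes $(1\,2)$ or the identity according to the parity of $x$ (well defined since $m$ is even), is an injective homomorphism. A direct computation from $(\sigma,\bar x)(\tau,\bar y)=(\sigma\,(1\,2)^x\tau\,(1\,2)^x,\overline{x+y})$ shows $\Theta$ is multiplicative, and its kernel is clearly trivial. Since $\sgn(\sigma\,(1\,2)^x)=(-1)^x$ for $\sigma\in A_n$, the image lands in the index-two subgroup
\[
L:=\{(\rho,\bar x)\in\Sigma_n\times\Z_m:\ \sgn(\rho)=(-1)^{x}\}.
\]
Comparing orders, $|A_n\rtimes_\varphi\Z_m|=\tfrac{n!}{2}\,m=|L|$, so $\Theta$ is an isomorphism onto $L$, exhibiting $A_n\rtimes_\varphi\Z_m$ as the fibre product of $\Sigma_n$ and $\Z_m$ over $\Z_2$ (via $\sgn$ and reduction mod $2$).

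Next I would choose generators inside $L$. Take $\rho_1=(1\,2\,\cdots\,n)$ and $\rho_2=(1\,2)$, which generate $\Sigma_n$, and set $\beta=(\rho_2,\bar1)\in L$ (legitimate, as $\sgn\rho_2=-1$) and $\alpha=(\rho_1,\bar\epsilon)\in L$, where $\epsilon\in\{0,1\}$ is chosen with $\epsilon\equiv n-1\pmod 2$ so that $\sgn\rho_1=(-1)^{\epsilon}$. Then $\langle\alpha,\beta\rangle\le L$ projects onto $\Sigma_n$ (its image contains $\rho_1,\rho_2$) and onto $\Z_m$ (its image contains $\bar1$). Finally I invoke Goursat's lemma: a subgroup of $\Sigma_n\times\Z_m$ surjecting onto both factors is the graph of an isomorphism $\Sigma_n/N\cong\Z_m/M$ for some normal $N\lhd\Sigma_n$ and $M\le\Z_m$. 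Since $\Z_m/M$ is abelian and, for $n\geq5$, the normal subgroups of $\Sigma_n$ are exactly $\{1,A_n,\Sigma_n\}$, the common quotient is either trivial, forcing $\langle\alpha,\beta\rangle=\Sigma_n\times\Z_m$, which is impossible because $\langle\alpha,\beta\rangle\le L\subsetneq\Sigma_n\times\Z_m$, or $\Z_2$, forcing $\langle\alpha,\beta\rangle=L$ exactly. Hence $\langle\alpha,\beta\rangle=L\cong A_n\rtimes_\varphi\Z_m$, so the group is two-generated.

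The only real content is the structural identification $A_n\rtimes_\varphi\Z_m\cong L$; once this is established, the remaining steps are routine and the twisting plays no role, since Goursat eliminates the need to disentangle the normal and cyclic parts by hand. This is the step where the more computational route—building explicit generators as in Proposition~\ref{prop:gen_direct} and controlling their orders modulo $m$—runs into the coprimality obstructions that the lemmas on generating cycles are designed to address; the fibre-product viewpoint avoids them entirely.
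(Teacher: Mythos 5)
Your proof is correct, but it takes a genuinely different route from the paper. The paper proves two-generation by exhibiting explicit generators: it splits into the cases $6\nmid m$ and $6\mid m$, uses Miller's lemma (Lemma~\ref{lem:generators}) to produce two cycles of coprime odd orders generating $A_n$, and in the $6\mid m$ case invokes a Bertrand-type statement (Lemma~\ref{lem:primes}) to find two primes whose associated cycles let one recover the $\Z_m$-factor by raising generators to suitable coprime powers. You instead identify $A_n\rtimes_{\varphi}\Z_m$ structurally with the index-two fibre product $L=\{(\rho,\bar x)\in\Sigma_n\times\Z_m:\sgn(\rho)=(-1)^x\}$ via $(\sigma,\bar x)\mapsto(\sigma\,(1\,2)^x,\bar x)$ — a computation I have checked is a well-defined isomorphism onto $L$ precisely because $m$ must be even for $\varphi$ to exist — and then two-generate $L$ by a pair projecting onto both factors, with Goursat's lemma (plus the fact that $\Sigma_n$ has abelianization $\Z_2$ and only $1$, $A_n$, $\Sigma_n$ as normal subgroups for $n\geq 5$) pinning down the subgroup as exactly $L$. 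Your argument is uniform in $m$, needs no case analysis, and dispenses with both auxiliary lemmas, which is a real simplification; it even yields the concrete generating pair $(id,\bar 1)$ and $\bigl((1\,2\,\cdots\,n)(1\,2)^{\epsilon},\bar\epsilon\bigr)$ upon pulling back through the isomorphism, so nothing used downstream (e.g.\ in the proof of Proposition~\ref{prop:wlp}, which only requires the existence of some generating pair) is lost. What the paper's computational route buys in exchange is explicit control over the cycle types and orders of the generators, which is in the same spirit as the rest of its data-set formalism, but that control is not needed for the statement at hand.
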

\begin{proof}
\textit{Case (i): $6\nmid m$.} Taking $\sigma=(3~4~5)$, we have that $\sigma':=[\sigma\cdot(1~2)]^{m}$ equals $(3~5~4)$ or $(3~4~5)$, depending upon whether $m=6k+2$ or $6k+4$, respectively. Taking $\tau=(1~3~5~2~4)^{-1}\cdot \tau_a \cdot (1~3~5~2~4)$ if $m=6k+2$ and $\tau=(1~3~5~2~4)\cdot \tau_a \cdot (1~3~5~2~4)^{-1}$ if $m=6k+4$ where 
$$\tau_a = 
 \begin{cases}
    (1~2~\dots~n), & \text{if } n \text{ is odd, and} \\
    (2~3~\dots~n), &  \text{if } n \text{ is even}
  \end{cases}$$
we have $A_n=\langle \sigma',\tau\rangle$. The group $\langle (\sigma, \bar{1}), (\tau, \bar{0}) \rangle \cong A_n\rtimes_{\varphi}\Z_{m}$. Hence, $A_n\rtimes_{\varphi}\Z_{m}$ is two-generated whenever $6\nmid m$.

\textit{Case (ii): $6\mid m$.} Take two distinct odd primes $p_1$ and $p_2$ such that $p_1,p_2 \leq n$ but $p_1+p_2>n$ (existence is guaranteed by Lemma \ref{lem:primes}). It is apparent that one of them must be $\leq n-2$, and we may assume that this prime is $p_1$. Suppose that $m=6k=p_1^{\ell_1}\cdot 2p_2^{\ell_2}\cdot q$ where $\ell_1,\ell_2\geq0$ such that $(p_1,q)=(p_2,q)=1$. Now consider a $p_1$-cycle $\sigma_1$, and a $p_2$-cycle $\sigma_2$ such that $A_n=\langle \sigma_1,\sigma_2\rangle$ as guaranteed by Lemma \ref{lem:generators}. Without loss of generality, as length$(\sigma_1)\leq n-2$, assume that $\sigma_1$ and $(1~2)$ are disjoint (if not, conjugate both the generators by a fixed suitable permutation to achieve the same).

It remains to be shown that the elements $(\sigma_1,\bar{p}_1^{\ell_1})$ and $(\sigma_2,\bar{2}\bar{p}_2^{\ell_2})$ generate $A_n\rtimes_{\varphi}\Z_{m}$. We note that $(\sigma_1,\bar{p}_1^{\ell_1})^{2p_2^{\ell_2}\cdot q}$ is $(\sigma_1^{2p_2^{\ell_2}\cdot q},\bar{0})$ and $(\sigma_2,\bar{2}\bar{p}_2^{\ell_2})^{p_1^{\ell_1}q}=(\sigma_2^{p_1^{\ell_1}q},\bar{0})$. But $\langle \sigma_1^{2p_2^{\ell_2}\cdot q} \rangle=\langle\sigma_1\rangle$ and $\langle \sigma_2^{p_1^{\ell_1}q} \rangle = \langle \sigma_2 \rangle$. Hence $(\sigma,\bar{0}) \in \Big\langle (\sigma_1,\bar{p}_1^{\ell_1}),(\sigma_2,\bar{2}\bar{p}_2^{\ell_2}) \Big\rangle$ for all $\sigma\in A_n$. Therefore $$(id,\bar{p}_1^{\ell_1}),(id,\bar{2}\bar{p}_2^{\ell_2})\in \Big\langle (\sigma_1,\bar{p}_1^{\ell_1}),(\sigma_2,\bar{2}\bar{p}_2^{\ell_2}) \Big\rangle.$$ Since $(p_1^{\ell_1},2p_2^{\ell_2})=1$, $(id,\bar{1})\in \Big\langle (\sigma_1,\bar{p}_1^{\ell_1}),(\sigma_2,\bar{2}\bar{p}_2^{\ell_2}) \Big\rangle$. Thus, we have $$\Big\langle (\sigma_1,\bar{p}_1^{\ell_1}),(\sigma_2,\bar{2}\bar{p}_2^{\ell_2}) \Big\rangle=A_n\rtimes_{\varphi}\Z_{m}.$$
Hence, $A_n\rtimes_{\varphi}\Z_{m}$ is two-generated for $6\mid m$.
\end{proof}
We end this section with the following final remark.
\begin{rem}
	It is evident that $(\sigma_1,\bar{x}_1)$ is conjugate to $(\sigma_2,\bar{x}_2)$ in $A_n\times\Z_m$ if and only if $\sigma_1$ is conjugate to $\sigma_2$ in $A_n$ and $\bar{x}_1=\bar{x}_2$. For the group $A_n\rtimes_{\varphi}\Z_m$, an element $(\sigma_1,\bar{x}_1)$ is conjugate to $(\sigma_2,\bar{x}_2)$ in $A_n\rtimes_{\varphi}\Z_m$ if and only if
	$\bar{x}_1=\bar{x}_2$ and there exists $(\tau,\bar{y}) \in A_n\rtimes_{\varphi}\Z_m$ such that:
	\begin{center}
		$\sigma_2=\begin{cases}
		\tau\cdot\sigma_1\cdot\tau^{-1}, & \text{if both}~x_1~\text{and}~y~\text{are even,} \\
		\tau\cdot (1~2)\sigma_1(1~2)\cdot\tau^{-1}, & \text{if}~x_1~\text{is even but}~y~\text{is odd,}\\
		\tau\cdot \sigma_1\cdot(1~2)\tau^{-1}(1~2), & \text{if}~x_1~\text{is odd but}~y~\text{is even, and} \\
		\tau\cdot(1~2)\sigma_1\cdot\tau^{-1}(1~2), & \text{if both}~x_1~\text{and}~y~\text{are odd.}
		\end{cases}$
	\end{center}
	
\end{rem} 

\section{Main Theorems} 
\begin{defn}
\label{defn:H_data_set}
An \textit{$\E$-data set $\D_e$ of degree $(n,m)$ and genus $g\geq 2$} is an ordered tuple 
$$\D_e=\bigl((n,m,i),g_0; [(\sigma_1,\bar{x}_1);m_1,t_1], \dots, [(\sigma_r,\bar{x}_r);m_r,t_r]\bigr)$$
where $n,m,g_0$ are integers with $n\geq 5$ but $n\neq6$, $m \geq 1$, $g_0 \geq 0$, and $(\sigma_i,\bar{x}_i) \in \E$, non-trivial, satisfying the following conditions:
   \begin{enumerate}[(i)]
   \item  $\displaystyle  \frac{2-2g}{m \cdot n\, !/2}=  2-2g_{0}-\sum_{j=1}^{r} \left(1-\frac{1}{m_j}\right).$
     \item For $1 \leq j \leq r,$ $m_j$ is the order of $(\sigma_j,\bar{x}_j)$ in $\E$ and $t_j$ is the order of $\bar{x}_j$ in $\Z_m$.
\item For $g_0=0$, we have:
\begin{enumerate}[(a)]
     \item $\prod_{j = 1}^{r} (\sigma_j,\bar{x}_j)= (id,\bar{0})$, and
    \item $\bigl\langle (\sigma_1,\bar{x}_1),\ldots,(\sigma_r,\bar{x}_r)\bigr\rangle = \E$.
\end{enumerate}
\item For $g_0=1$, there exist $(\sigma_{r+1},\bar{x}_{r+1}), (\sigma_{r+2},\bar{x}_{r+2}) \in \E$ such that:
\begin{enumerate}[(a)]
    \item $\prod_{j = 1}^{r} (\sigma_j,\bar{x}_j)= [(\sigma_{r+2},\bar{x}_{r+2}), (\sigma_{r+1},\bar{x}_{r+1})]$, and
    \item $\bigl\langle (\sigma_1,\bar{x}_1),\ldots,(\sigma_r,\bar{x}_r), (\sigma_{r+1},\bar{x}_{r+1}), (\sigma_{r+2},\bar{x}_{r+2}) \bigr\rangle = \E$.
\end{enumerate}
\item For $g_0\geq 2$, the element $\sum_{j=1}^{r} \bar{x}_j=\bar{0}$.
        \end{enumerate}

\end{defn}
\noindent If a tuple $[(\sigma_j,\bar{x}_j);m_j,t_j]$ in $\D_e$ occurs more than once, then we will use the symbol $[(\sigma_j,\bar{x}_j);m_j,t_j]^{[\ell_j]}$ to denote that it occurs with multiplicity $\ell_j$ in $\D_e$. 

\begin{defn}
\label{defn:alt_sym_data_set}
Let $\D_e$ be an $E_{n,m}^i$-data set as in Definition~\ref{defn:H_data_set}. Then: 
\begin{enumerate}[(a)]
\item $\D_e$ is called an \textit{alternating data set} if, in which case $\D_e$ will take the simpler form: 
$$\D_e=\bigl(n,g_0; [\sigma_1;m_1], \dots, [\sigma_r;m_r]\bigr).$$
\item $\D_e$ is called a \textit{symmetric data set} if $m=2$ and $i=1$, in which case $\D_e$ will take the form:
$$\D_e=\bigl((n,2,1),g_0; [(\sigma_1,\bar{x}_1);m_1,t_1], \dots, [(\sigma_r,\bar{x}_r);m_r,t_r]\bigr).$$
\end{enumerate}  
\end{defn}
\noindent From here on, we will fix the notation $\D_a$ for an alternating data set and $\D_s$ for a symmetric data set. We will now define an equivalence on $\E$-data sets.
\begin{defn}
\label{defn:eq_data_sets}
Two $\E$-data sets 
\begin{gather*}
\D_e=\bigl((n,m,i),g_0; [(\sigma_1,\bar{x}_1);m_1,t_1], \dots, [(\sigma_r,\bar{x}_r);m_r,t_r]\bigr)\\
\text{and}\\
\D_e'=\bigl((n',m',i'),g_0'; [(\sigma_1',\bar{x}_1');m_1',t_1'], \dots, [(\sigma_{r'}',\bar{x}_{r'}');m_{r'}',t_{r'}']\bigr)
\end{gather*}  are said to be \textit{equivalent} if there exists a permutation $\pi$ on $r$ letters and an integer $\ell$ with $(\ell,m)=1$, satisfying the following conditions.
\begin{enumerate}[(i)]
    \item $n=n', m=m', i=i', g_0=g_0',$ and $r=r'$.
    \item For $1 \leq j \leq r$, $\bar{x}_{\pi(j)}'=\ell\bar{x}_j$ in $\Z_m$.
    \item For $i=0$, there exists $\tau_{\alpha} \in \Sigma_n$ such that $\sigma_{\pi(j)}'$ is conjugate to $\tau_\alpha \sigma_j \tau_{\alpha}^{-1}$ in $A_n$.
    \item For $i=1$, there exists $\tau_{\alpha} \in \Sigma_n$ and $\beta:\Z_m\to A_n$ as defined in Proposition \ref{aut_semidirect}, such that $\sigma_{\pi(j)}'$ is conjugate to $\tau_\alpha \sigma_j \tau_{\alpha}^{-1}\beta(\bar{x}_j)$ in $A_n$.

\end{enumerate}
\end{defn}

\noindent We represent the equivalence class of an $\E$-data set $\D_e$ by $[\D_e]$.
\begin{prop}
\label{prop:wlp}
 Equivalence classes of $\E$-data sets of genus $g$ correspond to weak conjugacy classes of $\E$-actions on $S_g$.
\end{prop}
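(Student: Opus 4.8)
The plan is to establish the stated correspondence as a genuine bijection, built on Riemann's existence theorem (Theorem~\ref{thm:riemann}). To a given $\E$-action on $S_g$ I would first attach a surface kernel epimorphism $\phi:\Gamma\to\E$, where $\Gamma$ is a cocompact Fuchsian group with the standard presentation of Equation~(\ref{eqn:eqn1}) and $\sig(\Gamma)=(g_0;m_1,\dots,m_r)=\sig(\Orb_{\E})$. Recording the images $(\sigma_j,\bar x_j):=\phi(\xi_j)$ of the elliptic generators, together with $m_j=\mathrm{ord}(\sigma_j,\bar x_j)$ (equal to $\mathrm{ord}(\xi_j)$ by order-preservation) and $t_j=\mathrm{ord}(\bar x_j)$, produces a tuple $\D_e$. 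I would verify that $\D_e$ satisfies all axioms of Definition~\ref{defn:H_data_set}: axiom~(i) is the Riemann--Hurwitz identity of Theorem~\ref{thm:riemann}(iii) with $|\E|=m\cdot n!/2$; axiom~(ii) is order-preservation; and axioms~(iii)--(v) are read off from the long relation $\prod_j\xi_j\prod_i[\alpha_i,\beta_i]=1$ together with surjectivity of $\phi$. In particular, projecting $\phi$ to $\E/A_n\cong\Z_m$ and using that $\Z_m$ is abelian forces $\sum_j\bar x_j=\bar 0$, which is axiom~(v), while for $g_0=0,1$ surjectivity becomes the generation conditions (iii)(b) and (iv)(b) of Definition~\ref{defn:H_data_set}.

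Next, I would show that two $\E$-actions are weakly conjugate (Definition~\ref{defn:wk_conjugacy}) if and only if their data sets are equivalent (Definition~\ref{defn:eq_data_sets}). For the forward direction, suppose $\psi:\pi_1^{orb}(\Orb_{H_1})\to\pi_1^{orb}(\Orb_{H_2})$ and $\chi:H_1\to H_2$ witness weak conjugacy. Since isomorphic cocompact Fuchsian groups share the same signature, condition~(i) of the equivalence holds, and applying Theorem~\ref{thm:zieschang} to $\psi$ (after a fixed identification of the two Fuchsian groups of equal signature) yields a permutation $\pi$ with $\psi(\xi_j)\sim\xi'_{\pi(j)}$; hence $(\phi_{H_2}\circ\psi)(\xi_j)$ is conjugate to $(\sigma'_{\pi(j)},\bar x'_{\pi(j)})$ in $H_2$. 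On the other side, identifying each $H_k$ with $\E$ and invoking Proposition~\ref{aut_semidirect}, the isomorphism $\chi$ has the matrix form with entries $\alpha\in\aut(A_n)$ (conjugation by some $\tau_\alpha\in\Sigma_n$), $\delta\in\aut(\Z_m)$ (multiplication by some $\ell$ coprime to $m$), and off-diagonal $\beta$, so that $\chi(\sigma_j,\bar x_j)=(\tau_\alpha\sigma_j\tau_\alpha^{-1}\beta(\bar x_j),\ell\bar x_j)$ when $i=1$ and $(\tau_\alpha\sigma_j\tau_\alpha^{-1},\ell\bar x_j)$ when $i=0$. The weak-conjugacy hypothesis says this element is conjugate in $\E$ to $(\sigma'_{\pi(j)},\bar x'_{\pi(j)})$; feeding this into the conjugacy criterion for $A_n\rtimes_\varphi\Z_m$ recorded in the final remark of Section~\ref{sec:prelims} pins down $\bar x'_{\pi(j)}=\ell\bar x_j$ and the conjugacy of $\sigma'_{\pi(j)}$ to $\tau_\alpha\sigma_j\tau_\alpha^{-1}\beta(\bar x_j)$ in $A_n$, which are exactly conditions~(ii)--(iv). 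The converse reverses this computation: from an equivalence with data $(\pi,\ell,\tau_\alpha,\beta)$, I set $\chi=\left(\begin{smallmatrix}\alpha&\beta\\0&\delta\end{smallmatrix}\right)\in\aut(\E)$ and choose an isomorphism $\psi$ of the two Fuchsian groups sending $\xi_j\mapsto\xi'_{\pi(j)}$, then check that the diagram of Definition~\ref{defn:wk_conjugacy} commutes up to conjugacy on torsion.

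Finally, I would establish surjectivity, i.e.\ that every $\E$-data set of genus $g$ is realized by an action. Here I build $\Gamma$ with the prescribed signature and define $\phi$ on the elliptic generators by $\phi(\xi_j)=(\sigma_j,\bar x_j)$. Axioms (iii)(a)/(iv)(a)/(v) guarantee the long relation is respected, so $\phi$ extends to a homomorphism; for $g_0=0,1$ surjectivity is immediate from (iii)(b)/(iv)(b). For $g_0\ge2$ I would use that $[\E,\E]=A_n$ (since $\E/A_n$ is abelian and $A_n$ is perfect for $n\ge5$): axiom~(v) places $\bigl(\prod_j(\sigma_j,\bar x_j)\bigr)^{-1}$ in $A_n$, so it can be expressed through the commutators supplied by the hyperbolic generators, while the two-generation of $\E$ (Propositions~\ref{prop:gen_direct} and~\ref{prop:gen_semidirect}) lets me simultaneously pick images of two hyperbolic generators that generate $\E$, securing surjectivity. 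Theorem~\ref{thm:riemann} then promotes $\phi$ to an honest $\E$-action on $S_g$ whose data set is the prescribed one, and combining the three steps yields the bijection.

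I expect the crux to be the precise matching of the ``commutes up to conjugacy on torsion'' clause of weak conjugacy with the algebraic conditions (iii)/(iv) of the data-set equivalence, since this requires carefully threading the semidirect-product conjugacy criterion (with its $(1\,2)$-twists and the $\beta(\bar x_j)$ correction term) through the matrix description of $\aut(\E)$, while ensuring that the Zieschang permutation $\pi$ and the Fuchsian isomorphism $\psi$ can be chosen compatibly with a prescribed $\chi$. A secondary technical point is the realization step for $g_0\ge2$, where two-generation and the perfectness of $A_n$ must be combined to meet the relation and surjectivity constraints at once.
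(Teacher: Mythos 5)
Your proposal follows essentially the same route as the paper's proof: extract the tuple $\phi(\xi_j)=(\sigma_j,\bar x_j)$ from a surface kernel epimorphism, match data-set equivalence with weak conjugacy via Theorem~\ref{thm:zieschang} together with the matrix description of $\aut(\E)$ from Proposition~\ref{aut_semidirect}, and realize an arbitrary data set by defining $\phi$ on a Fuchsian group of the prescribed signature, using Propositions~\ref{prop:gen_direct} and~\ref{prop:gen_semidirect} on the hyperbolic generators. The one place where your justification falls short is the realization step for $g_0\ge 2$. You reserve one pair of hyperbolic generators to generate $\E$ and then claim the leftover correction element --- which does lie in $A_n\times\{\bar 0\}$ by axiom (v) and the triviality of the $\Z_m$-component of any commutator --- ``can be expressed through the commutators supplied by the hyperbolic generators'' because $A_n$ is perfect. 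Perfectness only yields a product of commutators of a priori unbounded length, whereas for $g_0=2$ exactly one commutator slot remains after the generating pair is fixed. What is actually needed, and what the paper invokes, is Ore's theorem that every element of $A_n$ is a \emph{single} commutator; this lets one set $\phi(\alpha_2),\phi(\beta_2)=(\rho_1,\bar 0),(\rho_2,\bar 0)$ with $[(\rho_1,\bar 0),(\rho_2,\bar 0)]$ equal to the required correction and send the remaining hyperbolic generators to the identity. With that citation substituted, your argument is the paper's proof in a reordered presentation: you pass from actions to data sets first, while the paper defines the map from data sets to actions and then verifies well-definedness, injectivity, and surjectivity.
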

\begin{proof}

We show this correspondence by establishing a natural bijective map:
\begin{center}
$\left\{\parbox{38mm}{\raggedright Equivalence classes of $\E$-data sets}\right\}\xrightarrow{\makebox[1.5cm]{$\Theta_e$}}
\left\{\parbox{40mm}{\raggedright Weak conjugacy classes of $\E$-actions on $S_g$} \right\}.$
\end{center}
Consider an $\E$-data set $\D_e=\bigl((n,m,i),g_0; [(\sigma_1,\bar{x}_1);m_1,t_1], \dots, [(\sigma_r,\bar{x}_r);m_r,t_r]\bigr)$.  Fix a co-compact Fuchsian group $\Gamma$ with $\sig(\Gamma)=(g_0;m_1,\dots,m_r)$, having the presentation as in (\ref{eqn:eqn1}). Define a map $\phi: \Gamma \to \E$ as follows:
$$\begin{array}{rcll}
\phi(\xi_j) & := & (\sigma_j,\bar{x}_j), & \text{for } g_0 \geq 0 \text{ and } 1\leq j \leq r,\\
\phi(\alpha_1) ,\phi(\beta_1) & := & (\sigma_{r+1},\bar{x}_{r+1}), (\sigma_{r+2},\bar{x}_{r+2}) &  \text{for }  g_0 = 1,\\
\phi(\alpha_1),\phi(\beta_1), \phi(\alpha_2),\phi(\beta_2) & := & (\sigma, \bar{x}),(\tau, \bar{y}),(\rho_1,\bar{0}),(\rho_2,\bar{0}) & \text{for }  g_0 \geq 2, \text{ and}\\
\phi(\alpha_i)=\phi(\beta_i) & := & (id,\bar{0}) & \text{for }  g_0 \geq 2 \text{ and } 3 \leq i \leq g_0,
\end{array}$$
such that $\langle (\sigma, \bar{x}),(\tau, \bar{y}) \rangle =E_{n,m}^{i}$ and $[(\rho_1,\bar{0}),(\rho_2,\bar{0})]\cdot[(\sigma, \bar{x}),(\tau, \bar{y})]\cdot\Pi_{i=1}^r (\sigma_i,\bar{x}_i) =(id,\bar{0})$. The existence of $(\sigma, \bar{x}),(\tau, \bar{y})$ is guaranteed by Propositions \ref{prop:gen_direct} and \ref{prop:gen_semidirect}. The existence of $(\rho_1,\bar{0}),(\rho_2,\bar{0})$ follows from the fact that every element in $A_n$ is a commutator \cite{Ore}. The conditions (i)-(v) on the data set $\D_e$ ensure that $\phi$ satisfies the hypothesis of Theorem \ref{thm:riemann}. Let $H<\homeo(S_g)$ be an  $E_{n,m}^{i}$-action thus obtained from the surface kernel map $\phi$. Define $\Theta_e([\D_e]):=[H]$.

Let $\D_e'=\bigl((n',m',i'),g_0'; [(\sigma_1',\bar{x}_1');m_1',t_1'], \dots, [(\sigma_{r'}',\bar{x}_{r'}');m_{r'}',t_{r'}']\bigr)$ be another $\E$-data set. Define a map, as described above, $\phi': \Gamma' \to \E$ where $\Gamma'$ is a co-compact Fuchsian group with $\sig(\Gamma')=(g_0';m_1',\dots,m_{r'}')$ and having the presentation as below:
\[\Gamma'=\biggl\langle~ \alpha_1',\beta_1',..., \alpha_{g_{0}}',\beta_{g_{0}}',\xi_{1}',...,\xi_{r}'~\bigg|~\xi_{1}'^{m_1'},...,\xi_{r}'^{m_r'},\prod_{j = 1}^{r} \xi_{j}'\prod_{i = 1}^{g_{0}} [\alpha_{i}',\beta_{i}']~\biggr\rangle.\tag{$2$} \label{eqn:eqn2}
\]
Let $\Theta_e([\D_e'])=[H']$. Suppose $[\D_e]=[\D_e']$, i.e., there exist permutations $\pi,\tau_\alpha \in \Sigma_r$, an integer $\ell$ with $(\ell,m)=1$, and $\beta:\Z_m\to A_n$ satisfying the conditions (i)-(iv) as in Definition \ref{defn:eq_data_sets}. It follows from basic group theoretic fact that, for the permutation $\pi$, there exists a $\psi'\in \aut(\Gamma')$ such that $\psi'(\xi_i')$ is conjugate to $\xi_{\pi(i)}'$ in $\Gamma'$. Define an isomorphism $\psi:\Gamma \to \Gamma'$ given by $\psi(\xi_i):=\psi'(\xi_i')$ and an isomorphism $\chi \in \aut(E_{n,m}^{i})$ given by $\begin{pmatrix}
\alpha & \beta\\
0 & \delta
\end{pmatrix}$ where $\alpha$ is the conjugation map by $\tau_\alpha$ and $\delta$ is the multiplication map by $\ell$. Now one can easily verify that the diagram in Figure \ref{fig:wkconjugacy2} commutes up to conjugacy at the level of torsion elements. Hence $H$ and $H'$ are weakly conjugate, i.e., $[H]=[H']$. Thus $\Theta_e$ is well-defined.
\begin{center}
\begin{tikzcd}[sep=12mm,
arrow style=tikz,
arrows=semithick,
diagrams={>={Straight Barb}}
                ]
\Gamma \dar["\phi" '] \rar["\psi",""name=U]
        & \Gamma' \dar["\phi'"]      \\
\E\rar["\chi"  ,""name=D] & \E
                     \ar[to path={(U) node[pos=.5,C] (D)}]{}
    \end{tikzcd}
\captionof{figure}{A commutative diagram.}
\label{fig:wkconjugacy2}
\end{center}

Conversely, if $H$ and $H'$ are weakly conjugate, then by Definition \ref{defn:wk_conjugacy}, there exist $\psi$ and $\chi$ such that the diagram in Figure \ref{fig:wkconjugacy2} commutes up to conjugacy at the level of torsion elements, i.e., $\phi' \circ \psi(\xi_j)$ is conjugate to $\chi \circ \phi(\xi_j)$ in $\E$ for all $1\leq j \leq r$. Thus, by \cite[Theorem 5.8.2]{zieschang}, it follows that $\psi(\xi_j)$ is conjugate to $\xi_{\pi(j)}'$ for some $\pi \in \Sigma_r$. Hence $\phi' \circ \psi(\xi_j)$ is conjugate to $\phi'(\xi_{\pi(j)}')=\Bigl(\sigma_{\pi(j)}',\bar{x}_{\pi(j)}'\Bigr)$. Let $\chi =\begin{pmatrix}
\alpha & \beta\\
0 & \delta
\end{pmatrix} \in \aut(E_{n,m}^{i})$, then $$\chi \circ \phi(\xi_j)=\begin{pmatrix}
\alpha & \beta\\
0 & \delta
\end{pmatrix}(\sigma_j,\bar{x}_j)=\Bigl(\alpha(\sigma_j)\beta(\bar{x}_j),\delta(\bar{x}_j)\Bigr)=\Bigl(\tau_{\alpha}\sigma_j\tau_{\alpha^{-1}}\beta(\bar{x}_j),\ell\bar{x}_j\Bigr)$$ is conjugate to $\Bigl(\sigma_{\pi(j)}',\bar{x}_{\pi(j)}'\Bigr)$ in $\E$ for all $1\leq j \leq r$. Thus the conditions (i)-(iv) of Definition \ref{defn:eq_data_sets} are satisfied. Therefore, $\Theta_e$ is injective.

Finally, it remains to be shown that $\Theta_e$ is surjective. Let $H<\homeo(S_g)$ be an  $E_{n,m}^{i}$-action. Identifying $H$ with $\E$ and $\pi_1^{orb}(\Orb_H)$, where $\Orb_H=S_g/H$, with $\Gamma$, let $\phi_H:\Gamma \to \E$ be the surface kernel epimorphism, as in Theorem \ref{thm:riemann}. Then, one can check that $\Theta_e([\D_e])=[H]$ with $\D_e=\bigl((n,m,i),g_0; [(\sigma_1,\bar{x}_1);m_1,t_1], \dots, [(\sigma_r,\bar{x}_r);m_r,t_r]\bigr)$ such that $(\sigma_j,\bar{x}_j):=\phi_H(\xi_j)$ for all $1\leq j \leq r$. This completes the proof.

\end{proof}

\noindent As an immediate consequence, we have the following result.
\begin{cor}
\label{cor:alt_corr}
For $g\geq2$ and $n\geq5$ but $n\neq6$, equivalence classes of alternating data sets of genus $g$ and degree $n$ correspond to the weak conjugacy classes of $A_n$-actions on $S_g$.
\end{cor}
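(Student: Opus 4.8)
The plan is to recognize the corollary as the degenerate case $m=1$ of Proposition~\ref{prop:wlp}. When $m=1$, the cyclic factor $\Z_m = \Z_1$ is trivial, so $\E \cong A_n$ for either value of the flag $i \in \{0,1\}$; indeed $A_n \times \Z_1 = A_n \rtimes_{\varphi} \Z_1 = A_n$, and the flag becomes irrelevant. First I would observe that an alternating data set in the sense of Definition~\ref{defn:alt_sym_data_set}(a) is precisely an $\E$-data set with $m=1$: each component $\bar{x}_j \in \Z_1$ is forced to equal $\bar{0}$, whence $t_j = 1$ for all $j$, and each tuple $[(\sigma_j,\bar{0});m_j,1]$ collapses to $[\sigma_j;m_j]$, giving the stated simpler form $\bigl(n,g_0;[\sigma_1;m_1],\dots,[\sigma_r;m_r]\bigr)$. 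In this way the set of alternating data sets of degree $n$ is identified with the slice $m=1$ of the set of $\E$-data sets.

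Next I would verify that the equivalence of Definition~\ref{defn:eq_data_sets} restricts to the expected equivalence on alternating data sets. Since $m=1$, any integer $\ell$ with $(\ell,m)=1$ acts trivially on $\Z_1$, so condition~(ii) is vacuous; moreover $\beta(\bar{x}_j)=\beta(\bar{0})=id$, so conditions~(iii) and~(iv) coincide and both reduce to the single requirement that there exist a permutation $\pi$ on $r$ letters and a $\tau_\alpha \in \Sigma_n$ with $\sigma_{\pi(j)}'$ conjugate to $\tau_\alpha\sigma_j\tau_\alpha^{-1}$ in $A_n$. Hence the equivalence of alternating data sets is exactly the restriction of the $\E$-data set equivalence to the slice $m=1$. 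Likewise, with $H_1,H_2 \cong A_n \rtimes \Z_1 = A_n$, Definition~\ref{defn:wk_conjugacy} specializes verbatim to the usual notion of weak conjugacy of $A_n$-actions on $S_g$.

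Finally I would invoke Proposition~\ref{prop:wlp}: restricting the bijection $\Theta_e$ to the slice $m=1$ yields a bijection between equivalence classes of alternating data sets of genus $g$ and degree $n$ and weak conjugacy classes of $A_n$-actions on $S_g$. Here condition~(i) of Definition~\ref{defn:H_data_set} with $m=1$ is exactly the Riemann--Hurwitz equation of Theorem~\ref{thm:riemann} for $H\cong A_n$ with $|H|=n\,!/2$, while the generation conditions~(iii)(b) and~(iv)(b) become the requirement that the prescribed elements generate $A_n$, which is precisely the surface-kernel condition for an $A_n$-action. There is essentially no obstacle here beyond bookkeeping; the only point requiring a moment's care is confirming that the two flags $i=0,1$ do not give rise to distinct equivalence classes when $m=1$, which is immediate since both groups equal $A_n$ and the reduced data no longer records $i$.
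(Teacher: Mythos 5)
Your proposal is correct and matches the paper's intent exactly: the paper states Corollary~\ref{cor:alt_corr} as an immediate consequence of Proposition~\ref{prop:wlp}, obtained by specializing to the case $m=1$, which is precisely the reduction you carry out. Your additional bookkeeping (checking that the equivalence of Definition~\ref{defn:eq_data_sets} collapses to conjugation by $\tau_\alpha\in\Sigma_n$, and that the flag $i$ is immaterial when $m=1$) is sound and merely makes explicit what the paper leaves implicit.
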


A final ingredient in the proof of our main result is the following proposition, which follows from Lemma~\ref{lem:subaction}. 

\begin{prop}
\label{prop:cyclicgen}
Let $\D_e=\bigl((n,m,i),g_0; [(\sigma_1,\bar{x}_1);m_1,t_1], \dots, [(\sigma_r,\bar{x}_r);m_r,t_r]\bigr)$ represent the weak conjugacy class of an $\E$-action on $S_g$. For $(\sigma,\bar{x}) \in \E$ with $|(\sigma,\bar{x})|=d,$ we have: $$\D_e[(\sigma,\bar{x})]=\left(d,\tilde{g_0};(u_{ij}^{-1},d_i)^{\bigl[\frac{d_i}{d} \cdot |\mathbb{f}_{(\sigma,\bar{x})^{d/d_i}}(u_{ij},d_i)|\bigr]}: u_{ij}\in\mathbb{Z}_{d_i}^{\times}, ~d_i~|~d\right),$$ 
where $\tilde{g_0}$ is determined by the  Riemann-Hurwitz equation, and $\mathbb{f}_{(\sigma,\bar{x})^{d/d_i}}(u_{ij},d_i)$ denotes the set of fixed points of $(\sigma,\bar{x})^{d/d_i}$ with induced rotation angle $2\pi u_{ij}^{-1}/d_i$ that do not come from fixed points of any ${(\sigma,\bar{x})}^{d/d_{i'}}$ with induced rotation angle $2\pi u_{i'j'}^{-1}/d_{i'}$ such that $d_i'\neq d_i,~d_i|d_i'|d$, $\gcd(u_{i'j'},d_{i'})=1$, and $u_{ij}\equiv u_{i'j'}(\mathrm{mod}~d_i)$. Further, the cardinality of the set $\mathbb{f}_{(\sigma,\bar{x})^{d/d_i}}(u_{ij},d_i)$ is given by the recursive formula:
$$|\mathbb{f}_{(\sigma,\bar{x})^{d/d_i}}(u_{ij},d_i)|= |\mathbb{F}_{(\sigma,\bar{x})^{d/d_i}}(u_{ij},d_i)|- \sum\limits_{\substack{d_i'\in \N \\d_i'\neq d_i\\d_i|d_i'|d}}\sum\limits_{\substack{ \gcd(u_{i'j'},d_{i'})=1\\u_{ij}\equiv u_{i'j'}(\mathrm{mod}~d_i)}}|\mathbb{f}_{(\sigma,\bar{x})^{d/d_{i'}}}(u_{i'j'},d_{i'})|~.$$
\end{prop}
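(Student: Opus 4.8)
The plan is to recognize $\D_e[(\sigma,\bar{x})]$ as nothing more than the cyclic data set of the $\Z_d$-action generated by $(\sigma,\bar{x})$ on $S_g$, and then to read off its cone points directly from the global datum $\D_e$ through Lemma~\ref{lem:subaction}. By Theorem~\ref{thm:cyc_conj_class}, the conjugacy class of $\langle(\sigma,\bar{x})\rangle\cong\Z_d$ is determined by enumerating the cone points of the quotient orbifold $S_g/\langle(\sigma,\bar{x})\rangle$, each recorded as a pair (twist, order), together with the genus $\tilde g_0$, which is then forced by the Riemann-Hurwitz equation once all the orders are known. Thus the entire content of the proposition lies in identifying the orders $d_i\mid d$ occurring, their local rotation data, and their multiplicities.

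First I would stratify the points of $S_g$ with nontrivial $\langle(\sigma,\bar{x})\rangle$-stabilizer according to the order of that stabilizer: a point $p$ has stabilizer $\langle(\sigma,\bar{x})^{d/d_i}\rangle\cong\Z_{d_i}$ for a unique divisor $d_i\mid d$, and the generator $(\sigma,\bar{x})^{d/d_i}$ acts on the tangent plane at $p$ by a rotation whose inverse rotation number is recorded as the first coordinate $u_{ij}^{-1}$ of the datum, following the convention of Definition~\ref{defn:data_set}. Applying Lemma~\ref{lem:subaction} to the $\E$-action with $\G=(\sigma,\bar{x})^{d/d_i}$ of order $d_i$ and $u=u_{ij}\in\Z_{d_i}^\times$ yields $|\mathbb{F}_{(\sigma,\bar{x})^{d/d_i}}(u_{ij},d_i)|$, the total number of points of $S_g$ fixed by $(\sigma,\bar{x})^{d/d_i}$ carrying this rotation number, computed purely from $\D_e$.

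The hard part, and the step I expect to be the main obstacle, is that $\mathbb{F}_{(\sigma,\bar{x})^{d/d_i}}(u_{ij},d_i)$ \emph{overcounts}: it records every point fixed by $(\sigma,\bar{x})^{d/d_i}$, including those whose true stabilizer is strictly larger, i.e. those fixed by some $(\sigma,\bar{x})^{d/d_{i'}}$ with $d_i\mid d_{i'}\mid d$ and $d_{i'}\neq d_i$. To isolate the points with \emph{exact} stabilizer $\Z_{d_i}$, denoted $\mathbb{f}_{(\sigma,\bar{x})^{d/d_i}}(u_{ij},d_i)$, I would subtract the contributions of all larger strata. The crucial compatibility check is on the rotation angles: since $(\sigma,\bar{x})^{d/d_i}=\bigl((\sigma,\bar{x})^{d/d_{i'}}\bigr)^{d_{i'}/d_i}$, a point fixed by $(\sigma,\bar{x})^{d/d_{i'}}$ with rotation number $u_{i'j'}$ contributes to $\mathbb{F}_{(\sigma,\bar{x})^{d/d_i}}(u_{ij},d_i)$ precisely when $u_{ij}\equiv u_{i'j'}\pmod{d_i}$. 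This produces exactly the congruence constraints appearing in the displayed recursion, and a downward induction over the divisor lattice of $d$ then establishes the recursive formula for $|\mathbb{f}_{(\sigma,\bar{x})^{d/d_i}}(u_{ij},d_i)|$.

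Finally, I would pass from fixed points on $S_g$ to cone points downstairs via the orbit-stabilizer theorem. Because $(\sigma,\bar{x})$ commutes with $(\sigma,\bar{x})^{d/d_i}$, the set of points with exact stabilizer $\Z_{d_i}$ and fixed rotation number is $\langle(\sigma,\bar{x})\rangle$-invariant and breaks into orbits of size $d/d_i$, each projecting to a single cone point of $S_g/\langle(\sigma,\bar{x})\rangle$ of order $d_i$ with datum $(u_{ij}^{-1},d_i)$. Hence the number of such cone points is $\frac{d_i}{d}\,|\mathbb{f}_{(\sigma,\bar{x})^{d/d_i}}(u_{ij},d_i)|$, which is the multiplicity exponent asserted. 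Collecting these pairs over all $d_i\mid d$ and $u_{ij}\in\Z_{d_i}^\times$ and solving the Riemann-Hurwitz equation for $\tilde g_0$ recovers $\D_e[(\sigma,\bar{x})]$ exactly as stated.
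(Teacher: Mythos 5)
Your proposal is correct and follows exactly the route the paper intends: the paper offers no written proof beyond the remark that the proposition "follows from Lemma~\ref{lem:subaction}", and your argument — stratifying fixed points by exact stabilizer order, computing the raw counts $|\mathbb{F}|$ via Lemma~\ref{lem:subaction}, removing the larger strata by the stated recursion with the rotation-number congruence $u_{ij}\equiv u_{i'j'}\pmod{d_i}$ coming from $(\sigma,\bar{x})^{d/d_i}=\bigl((\sigma,\bar{x})^{d/d_{i'}}\bigr)^{d_{i'}/d_i}$, and dividing by the orbit size $d/d_i$ to get cone-point multiplicities — is precisely the omitted verification. No gaps; your write-up in fact supplies more detail than the paper does.
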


\noindent Thus, we obtain the main result of this paper, which follows from Propositions~\ref{prop:wlp} - \ref{prop:cyclicgen}.
\begin{theorem}[Main Theorem 1]\label{thm:main1}
Let $F, G \in \map(S_g)$ be two periodic mapping classes. Then there exist conjugates $F', G'$ of $F , G$ respectively such that $\langle F', G'\rangle \cong \E$ if and only if there exists an $\E$-data set $\D_e$ of genus $g$ such that the cyclic data sets $D_{F}=\D_e[(\sigma,\bar{x})]$ and $D_{G}=\D_e[(\tau,\bar{y})]$ for some generating pair $(\sigma,\bar{x}),(\tau,\bar{y}) \in \E$. Moreover, if $\lambda_{n}$ denotes the number of non-equivalent $\E$-data sets of genus $g$ with this property, then $F$ and $G$ weakly generate $\lambda_n$ many $\E$-subgroups of $\map(S_g)$ up to weak conjugacy.
\end{theorem}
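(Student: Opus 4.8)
The plan is to assemble the three preceding propositions into a dictionary that translates the algebraic datum ``$\langle F', G'\rangle \cong \E$'' into the combinatorial datum ``an $\E$-data set $\D_e$ carrying a generating pair with the prescribed cyclic data sets.'' The three pillars are: Nielsen's classification (Theorem~\ref{thm:cyc_conj_class}), by which a periodic mapping class $F$ is determined up to conjugacy in $\map(S_g)$ by its cyclic data set $D_F$; the bijection $\Theta_e$ of Proposition~\ref{prop:wlp} between equivalence classes of $\E$-data sets and weak conjugacy classes of $\E$-actions on $S_g$; and Proposition~\ref{prop:cyclicgen}, which reads off, from a given $\E$-data set $\D_e$, the cyclic data set $\D_e[(\sigma,\bar{x})]$ of the periodic map that a group element $(\sigma,\bar{x}) \in \E$ induces under the realized action. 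With these in hand, both implications reduce to careful bookkeeping.

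For the forward direction, suppose $F', G'$ are conjugates of $F, G$ with $E := \langle F', G'\rangle \cong \E$. By the Nielsen realization theorem the finite group $E$ acts on $S_g$ by isometries, so $E < \homeo(S_g)$ is an $\E$-action; applying $\Theta_e^{-1}$ from Proposition~\ref{prop:wlp} produces an $\E$-data set $\D_e$ of genus $g$ representing its weak conjugacy class. Fixing an identification $E \cong \E$, the generators $F', G'$ correspond to a pair $(\sigma,\bar{x}), (\tau,\bar{y}) \in \E$ that generates $\E$. Proposition~\ref{prop:cyclicgen} computes the cyclic data set of $\langle (\sigma,\bar{x})\rangle$ to be $\D_e[(\sigma,\bar{x})]$; since this is by definition the cyclic data set $D_{F'}$ of $F'$, and $F'$ is conjugate to $F$ so that $D_{F'}=D_F$, we obtain $D_F = \D_e[(\sigma,\bar{x})]$, and symmetrically $D_G = \D_e[(\tau,\bar{y})]$. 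Different choices of identification $E \cong \E$ differ by an automorphism of $\E$, which moves $\D_e$ only within its equivalence class and permutes the generating pair accordingly, so existence of the data set and pair is unaffected.

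For the converse, suppose such a $\D_e$ and generating pair $(\sigma,\bar{x}),(\tau,\bar{y})$ exist. Realize $[\D_e]$ via $\Theta_e$ as an $\E$-action $E < \homeo(S_g)$, and let $F'', G'' \in E$ be the periodic maps corresponding to $(\sigma,\bar{x})$ and $(\tau,\bar{y})$. Since the pair generates $\E$, we have $\langle F'', G''\rangle = E \cong \E$. By Proposition~\ref{prop:cyclicgen} the cyclic data sets of $F''$ and $G''$ are $\D_e[(\sigma,\bar{x})] = D_F$ and $\D_e[(\tau,\bar{y})] = D_G$; hence by Theorem~\ref{thm:cyc_conj_class} the maps $F''$ and $G''$ are conjugate in $\map(S_g)$ to $F$ and $G$ respectively. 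Taking $F' := F''$ and $G' := G''$ then exhibits conjugates of $F, G$ with $\langle F', G'\rangle \cong \E$, as required; note that the two conjugations are allowed to be independent, which is what makes this step immediate.

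For the counting statement, the point is that via $\Theta_e$ the weak conjugacy classes of $\E$-subgroups weakly generated by $F$ and $G$ correspond exactly to the equivalence classes of $\E$-data sets enjoying the displayed property, so there are precisely $\lambda_n$ of them. The step that needs care --- and the main obstacle --- is checking that this property descends to equivalence classes of data sets: one must verify that the finite collection $\{\D_e[(\sigma,\bar{x})] : (\sigma,\bar{x}) \in \E\}$, together with its pairing with ordered generating pairs, is preserved under the equivalence of Definition~\ref{defn:eq_data_sets}. This follows by tracking the automorphism $\chi = \left(\begin{smallmatrix}\alpha & \beta\\ 0 & \delta\end{smallmatrix}\right)$ of Proposition~\ref{aut_semidirect} that implements the equivalence: it permutes the elements of $\E$ while sending each $(\sigma,\bar{x})$ to $\chi(\sigma,\bar{x})$ of the same order and identical fixed-point behaviour, so the cyclic data sets output by Proposition~\ref{prop:cyclicgen} are unchanged. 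A secondary point to confirm is that Nielsen's correspondence is invoked at the level of conjugacy in $\map(S_g)$, not merely in $\homeo(S_g)$, which is exactly the content of Theorem~\ref{thm:cyc_conj_class} together with the Nielsen realization theorem.
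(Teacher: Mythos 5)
Your proposal is correct and follows essentially the same route as the paper, which derives Theorem~\ref{thm:main1} directly from Propositions~\ref{prop:wlp} and~\ref{prop:cyclicgen} together with Nielsen's classification (Theorem~\ref{thm:cyc_conj_class}) and the Nielsen realization theorem; the paper leaves the bookkeeping implicit, and your writeup supplies it accurately, including the point that the two conjugations may be chosen independently and that the defining property descends to equivalence classes of $\E$-data sets.
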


\begin{exmp}
Consider the weak conjugacy class of action represented by an $\E$-data set
$$\D_e=\bigl((7,10,0),1; [(\sigma_1,\bar{1});10,10],[(\sigma_2,\bar{1});70,10], [(id,-\bar{2});5,5]\bigr),$$
where $\sigma_1=(1~2~3~4~5)$ and $\sigma_2=(1~2~3~4~5~6~7)$. Note that $\langle (\sigma_1,\bar{1}),(\sigma_2,\bar{1})\rangle=A_7 \times \Z_{10}$. We now compute the cyclic factors $\D_e[(\sigma_1,\bar{1})]$ and $\D_e[(\sigma_2,\bar{1})]$ associated with the $\E$-data set $\D_e$ by applying Proposition~\ref{prop:cyclicgen}. For $(\sigma_1,\bar{1})$, first note that $d= |(\sigma_1,\bar{1})|=10$. To compute the multiplicity of $(u_{1j}^{-1},d_1)=(1,10)$ in $\D_e[(\sigma_1,\bar{1})]$, we use the recursive formula in Proposition~\ref{prop:cyclicgen} and obtain that
$|\mathbb{f}_{(\sigma_1,\bar{1})}(1,10)|= |\mathbb{F}_{(\sigma_1,\bar{1})}(1,10)|$. By Lemma~\ref{lem:subaction}, we have:
\[|\mathbb{F}_{(\sigma_1,\bar{1})}(1,10)|= |C_{A_7 \times \Z_{10}}((\sigma_1,\bar{1}))|\cdot \frac{1}{10}=50\cdot\frac{1}{10}=5.\]
Thus, the multiplicity of the pair $(1,10)$ in $\D_e[(\sigma_1,\bar{1})]= \frac{10}{10} \cdot 5=5$. Similarly, the multiplicities of the pairs $(u_{12}^{-1},d_1)=(3,10),(u_{13}^{-1},d_1)=(7,10),(u_{14}^{-1},d_1)=(9,10)$ can be computed and all of them are turned out to be zero. The next divisor of $d(=10)$ is $d_2=5$. To compute the multiplicity of $(u_{21}^{-1},d_2)=(1,5)$ in $\D_e[(\sigma_1,\bar{1})]$, we appeal to the same recursive formula, and obtain that
\begin{eqnarray*}
|\mathbb{f}_{(\sigma_1,\bar{1})}(1,5)| &=& |\mathbb{F}_{(\sigma_1,\bar{1})^{2}}(1,5)|-|\mathbb{f}_{(\sigma_1,\bar{1})}(1,10)|\\
&=& |C_{A_7 \times \Z_{10}}((\sigma_1^2,\bar{2}))|\cdot \frac{1}{10}-5\\
&=& 50\cdot\frac{1}{10}-5=0.
\end{eqnarray*}
Similar computations show that the multiplicities of the pairs $(u_{22}^{-1},d_2)=(2,5),(u_{23}^{-1},d_2)=(3,5),(u_{24}^{-1},d_2)=(4,5)$ are also zero. The next and the smallest (non-trivial) divisor of $d(=10)$ is $d_3=2$. We compute the multiplicity of $(u_{31}^{-1},d_3)=(1,2)$ in $\D_e[(\sigma_1,\bar{1})]$ in a similar manner, and first derive that:
\begin{eqnarray*}
|\mathbb{f}_{(\sigma_1,\bar{1})}(1,2)| &=& |\mathbb{F}_{(\sigma_1,\bar{1})^{5}}(1,2)|-\sum\limits_{\substack{u_{1j}=1,3,7,9}}|\mathbb{f}_{(\sigma_1,\bar{1})}(u_{1j},10)|-\sum\limits_{\substack{u_{2j}=1,3}}|\mathbb{f}_{(\sigma_1,\bar{1})}(u_{2j},5)|\\
&=& |C_{A_7 \times \Z_{10}}((id,\bar{5}))|\cdot \left(\frac{1}{10}+\frac{1}{70}\right)-5\\
&=& \frac{7!}{2}\cdot10\cdot\left(\frac{1}{10}+\frac{1}{70}\right)-5=2875.
\end{eqnarray*}
Thus, the multiplicity of the pair $(1,2)$ in $\D_e[(\sigma_1,\bar{1})]$ is $\frac{2}{10}\cdot 2875=575$. The genus of $\D_e$ is given by the equation
\[\frac{2-2g}{7!\cdot5}=2-2-\Bigl[\left(1-\frac{1}{10}\right)-\left(1-\frac{1}{70}\right)-\left(1-\frac{1}{5}\right)\Bigr],\]
which implies that $g=33841$. Finally, the orbifold genus of $\D_e[(\sigma_1,\bar{1})]$ is given by the equation
\[ \frac{2-2g}{10}=2-2g_0-\left(1-\frac{1}{10}\right)\cdot5-\left(1-\frac{1}{2}\right)\cdot575,\]
which implies that $g_0=3239$. Therefore $\D_e[(\sigma_1,\bar{1})]=(10,3239;(1,10)^{[5]},(1,2)^{[575]})$. For $(\sigma_2,\bar{1})$, following the same line of arguments as above, one can conclude that \[\D_e[(\sigma_2,\bar{1})]=(70,298;(11,70),(51,70),(1,10)^{[51]},(2,5)^{[360]},(1,2)^{[72]}).\]
Thus, any two periodic mapping classes $F$ and $G$ in $\map(S_g)$ weakly generate a subgroup isomorphic to $A_7 \times \Z_{10}$ if $D_F=\D_e[(\sigma_1,\bar{1})]$ and $D_G=\D_e[(\sigma_2,\bar{1})]$.
\end{exmp}

For an $H$-action on $S_g$, where $H \cong A_n$, any $\overline{G} \in \map(\mathcal{O}_H)$ induces a permutation, denoted by $\Pi_{\overline{G}}$, on the set of cone points of $\mathcal{O}_H$. Let $D_{\overline{G}}$ denote the  cyclic data set of $\overline{G}$, where $\overline{G}$ is viewed as an element of the mapping class group of the surface part of $\mathcal{O}_H$ (obtained by forgetting the cone points). 

\begin{defn}
\label{defn:wlp}
Let $\D_a$ be an alternating data set, $D$ be a cyclic data set, and $\Pi \in \Sigma_r$. Then the pair $(\D_a, (D,\Pi))$ is called a \textit{weak-liftable pair of degree $m$} if there exists an $H \in \D_a$ and a periodic mapping class $\overline{G} \in \map(\Orb_H)$ of order $m$ satisfying the following conditions.
\begin{enumerate}[(i)] 
\item $\overline{G}$ lifts under the branched cover $S_g \to S_g/H$.
\item The cyclic data set $D_{\overline{G}}=D$, and the permutation $\Pi_{\overline{G}}=\Pi$.  
\end{enumerate}
\end{defn}
\noindent Note that it is implicit from Definition~\ref{defn:wlp} that $|\Pi|$ divides $m$. Also, since $(D,\Pi)$ depend on $\overline{G}$, we will fix the notation $\wlp$ for a weak-liftable pair with the implicit assumption that $\overline{G}$ is indeed the representative that lifts under $S_g \to S_g/H$. 

\begin{defn}
\label{defn:eq_wl_pairs}
Two weak-liftable pairs $\wlp$ and $(\D_a',(D_{\overline{G}'}, \Pi_{\overline{G}'}))$ are said to be \textit{equivalent} if the following are satisfied:
\begin{enumerate}[(i)]
\item $\D_a$ and $\D_a'$ are equivalent with respect to $\pi$ (as in Definition \ref{defn:eq_data_sets}).
\item $D_{\overline{G}^{\ell}}=D_{\overline{G}'}$ with $(\ell,|\overline{G}'|)=1$.
\item $\Pi_{\overline{G}}=\pi^{-1} \circ \Pi_{\overline{G}'} \circ \pi$.
\end{enumerate}
\end{defn}
\begin{prop}\label{prop:wls}There exists a well-defined surjective map:
\begin{center}
$\left\{\parbox{38mm}{\raggedright Equivalence classes of $\E$-data sets}\right\}\xrightarrow{\makebox[1.5cm]{$\Psi$}}
\left\{\parbox{38mm}{\raggedright Equivalence classes of \\weak-liftable pairs} \right\}.$
\end{center}
\end{prop}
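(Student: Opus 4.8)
The plan is to construct $\Psi$ geometrically by passing through realized actions, and then to verify that it respects both equivalence relations and is surjective. For a class $[\D_e]$, I would first fix a representative $\E$-action $E < \homeo(S_g)$ furnished by Proposition~\ref{prop:wlp}, together with the surface kernel epimorphism $\phi_E : \Gamma_E \to \E$, where $\Gamma_E := \pi_1^{orb}(S_g/E)$. Since $A_n = [\E,\E]$ is characteristic in $\E$, its preimage $\Gamma_H := \phi_E^{-1}(A_n)$ is a normal cocompact Fuchsian subgroup of $\Gamma_E$, and the restriction of the $E$-action to the subgroup $H := A_n$ is an $A_n$-action on $S_g$ with quotient $\Orb_H = S_g/H = \mathbb{H}/\Gamma_H$. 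By Corollary~\ref{cor:alt_corr} this action determines an equivalence class of alternating data sets, of which I take a representative $\D_a$. The quotient $\Gamma_E/\Gamma_H \cong \E/A_n \cong \Z_m$ acts on $\Orb_H$; I let $\overline{G}$ be a generator of this cyclic action, regarded as an order-$m$ periodic mapping class of $\Orb_H$. By the Birman--Hilden sequence~\eqref{eqn:eqn4}, $\overline{G}$ lifts under $S_g \to \Orb_H$, with a lift given by any element of $E$ projecting to a generator of $E/A_n$. Hence $\wlp$ is a weak-liftable pair of degree $m$ in the sense of Definition~\ref{defn:wlp}, and I set $\Psi([\D_e]) := [\wlp]$.

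For well-definedness, note first that the construction is insensitive to the auxiliary choices: a different representative action, or a different generator $\overline{G}$ (replaceable by $\overline{G}^{\ell}$ with $(\ell,m)=1$), alters $\wlp$ only within its equivalence class by conditions~(ii) and~(iii) of Definition~\ref{defn:eq_wl_pairs}. The essential point is invariance under equivalence of $\E$-data sets. If $[\D_e] = [\D_e']$, then by Proposition~\ref{prop:wlp} the corresponding $\E$-actions $E,E'$ are weakly conjugate, so there are an isomorphism $\psi : \Gamma_E \to \Gamma_{E'}$ and an isomorphism $\chi = \left(\begin{smallmatrix}\alpha & \beta \\ 0 & \delta\end{smallmatrix}\right) : E \to E'$ making the weak-conjugacy diagram commute up to conjugacy on torsion. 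Because $A_n$ is characteristic, $\chi(A_n) = A_n$, and, since inner automorphisms of $E'$ preserve $A_n$, one deduces $\psi(\Gamma_H) = \Gamma_{H'}$; thus $\psi$ restricts to $\psi_H : \Gamma_H \to \Gamma_{H'}$ realizing a weak conjugacy of the two $A_n$-actions. This yields condition~(i), namely $\D_a \sim \D_a'$ with respect to a permutation $\pi$ of the cone points of $\Orb_H$, the permutation being extracted from $\psi_H$ via Theorem~\ref{thm:zieschang}. The component $\delta$ of $\chi$ is the automorphism of $\Z_m \cong \E/A_n$ induced on $\Gamma_E/\Gamma_H$, namely multiplication by some $\ell$ with $(\ell,m)=1$; descending the commuting diagram to the cyclic quotients identifies $\overline{G}^{\ell}$ with $\overline{G}'$ up to conjugacy, i.e.\ $D_{\overline{G}^{\ell}} = D_{\overline{G}'}$, which is condition~(ii). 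Finally, the same $\pi$ conjugates the descended cyclic actions into one another, giving $\Pi_{\overline{G}} = \pi^{-1}\circ\Pi_{\overline{G}'}\circ\pi$, which is condition~(iii). Hence the two weak-liftable pairs are equivalent, and $\Psi$ is well defined.

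For surjectivity, I would begin with an arbitrary weak-liftable pair $\wlp$ of degree $m$. By Definition~\ref{defn:wlp} there is an $A_n$-action $H$ in the class $\D_a$ and an order-$m$ periodic $\overline{G} \in \map(\Orb_H)$ that lifts under $p : S_g \to \Orb_H$, with $D_{\overline{G}}$ and $\Pi_{\overline{G}}$ as prescribed. The preimage of $\langle\overline{G}\rangle$ under the Birman--Hilden sequence~\eqref{eqn:eqn4} is a finite group $E$ fitting into $1 \to A_n \to E \to \Z_m \to 1$; by Proposition~\ref{prop:extension} this splits and $E \cong \E$ for the appropriate flag $i$. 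By Nielsen realization $E$ is realized as an $\E$-action on $S_g$, which by Proposition~\ref{prop:wlp} corresponds to an $\E$-data set $\D_e$. Running the construction of $\Psi$ on $\D_e$, the restriction to $A_n$ recovers the action $H$, hence $\D_a$, while the quotient $E/A_n$ acting on $\Orb_H$ is precisely $\langle\overline{G}\rangle$, returning $(D_{\overline{G}},\Pi_{\overline{G}})$. Therefore $\Psi([\D_e]) = [\wlp]$, and $\Psi$ is surjective.

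The step I expect to be most delicate is the well-definedness, and within it the compatibility asserted by conditions~(ii) and~(iii): one must show that the single pair $(\pi,\ell)$ read off from the weak conjugacy of $\E$-actions simultaneously governs the permutation of the cone points of $\Orb_H$ and the relation between the descended cyclic classes. Concretely, this requires confirming that the combinatorial permutation produced by Theorem~\ref{thm:zieschang} applied to $\psi_H$ coincides with the geometric permutation $\Pi_{\overline{G}}$ of the cone points, that the $\Z_m$-multiplier arising as the quotient component $\delta$ of $\chi$ is exactly the $\ell$ of Definition~\ref{defn:eq_data_sets}, and that the ``up to conjugacy'' ambiguity in the weak-conjugacy diagram does not obstruct the restriction $\psi_H$ and the descent to quotients. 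In effect, one must verify that the entire datum $(\psi,\chi)$ descends coherently along the two-step quotient $S_g \to \Orb_H \to S_g/E$, so that $(\pi,\ell)$ witnesses all three conditions of Definition~\ref{defn:eq_wl_pairs} at once.
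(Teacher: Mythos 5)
Your proposal is correct and follows essentially the same route as the paper: $\Psi$ is constructed by restricting a representative $\E$-action to its characteristic $A_n$-subgroup and descending to the induced cyclic action on $\Orb_{H_a}$, and well-definedness is obtained by restricting the weak-conjugacy diagram to $\phi_E^{-1}(A_n)$. The only real difference is one of emphasis: you spell out surjectivity via the Birman--Hilden sequence and Proposition~\ref{prop:extension} (which the paper leaves implicit in the definition of a weak-liftable pair), whereas the paper devotes the second half of its proof to explicitly computing $\D_a$ and $D_{\overline{G}}$ from the entries of $\D_e$ using the behaviour of elliptic generators in the index-$m$ subgroup $\Gamma_1\triangleleft\Gamma$ --- a computation you omit but which is what makes $\Psi$ effective in the later examples.
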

\begin{proof}
Let $\D_e=\bigl((n,m,i),g_0; [(\sigma_1,\bar{x}_1);m_1,t_1], \dots, [(\sigma_r,\bar{x}_r);m_r,t_r]\bigr)$ be an $\E$-data set of genus $g$. Let $H<\homeo(S_g)$ be a representative $\E$-action of $\D_e$. Then, there exists a unique $H_a\triangleleft H$ such that $H_a\cong A_n$. Thus, we have an $A_n$-action on $S_g$ and a $\mathbb{Z}_m$-action on $S_g/H_a$ induced by $H/H_a~(=\langle \overline{G} \rangle)$, as depicted below:
\begin{center}
     \begin{tikzcd}  
S_g
\arrow[rr, bend right, "/H~(\cong \E)" ']
\arrow[r, "{/H_a~(\cong A_n)}"] &[3em] \mathcal{O}_{H_a} \arrow[r, "/\langle \overline{G} \rangle~(\cong\Z_m)"] &[3em] \mathcal{O}_{H}
     \end{tikzcd}
     \end{center}
By applying Corollary \ref{cor:alt_corr}, we can obtain an alternating data set $\D_a$ corresponding to $H_a$-action on $S_g$. For convenience, we will now assume the existence of $\D_a$, which we will compute explicitly later in the proof. Let $\Pi_{\overline{G}} \in \Sigma_r$ be the permutation induced by $\overline{G} \in \map(\Orb_{H_a})$ and $D_{\overline{G}}$ denote the cyclic data set of the $\Z_m$-action induced by $\overline{G}$ on $\Orb_{H_a} \approx S_{g_0'}$. Define $\psi([\D_e]):= [\wlp]$. Let $H'$ be another $\E$-action weakly conjugate to $H$. Let $\phi_{H}:\Gamma\to \E$ and $\phi_{H'}:\Gamma' \to \E$ be surface kernel maps associated to $\D_e$ and $\D_e'$ , respectively. Then
\begin{center}\label{fig:wkconjugacy3}
\begin{minipage}{.28\textwidth}
\begin{tikzcd}[sep=12mm,
arrow style=tikz,
arrows=semithick,
diagrams={>={Straight Barb}}
                ]
\Gamma \dar["\phi_{H}" '] \rar["\psi",""name=U]
        & \Gamma' \dar["\phi_{H'}"]      \\
\E\rar["\chi"  ,""name=D] & \E
                     \ar[to path={(U) node[pos=.5,C] (D)}]{}
    \end{tikzcd} 
\end{minipage}
\begin{minipage}{.27\textwidth}
restricts to 
\end{minipage}
\hspace{-1.5cm}
\begin{minipage}{.25\textwidth}
\begin{tikzcd}[sep=12mm,
arrow style=tikz,
arrows=semithick,
diagrams={>={Straight Barb}}
                ]
\Gamma_1 \dar["\phi_{H_a}" '] \rar["\psi|_{\Gamma_1}",""name=U]
        & \Gamma_1' \dar["\phi_{H_a'}"]      \\
A_n\rar["\tilde{\chi}"  ,""name=D] & A_n
                     \ar[to path={(U) node[pos=.5,C] (D)}]{},
    \end{tikzcd}
\end{minipage}
\end{center} 
where  $\Gamma_1=\phi_{H}^{-1}(A_n)$, $\Gamma_1'=\phi_{H'}^{-1}(A_n)$, $\phi_{H_a}=\phi_{H}\big|_{\Gamma_1}$, $\phi_{H_a'}=\phi_{H'}\big|_{\Gamma_1'}$, and $\tilde{\chi}=\alpha$ if $\chi=\begin{pmatrix}
\alpha & \beta \\ 
0 & \delta
\end{pmatrix}.$ Thus, $H_a$ and $H_a'$ are also weakly conjugate, i.e. $\D_a$ and $\D_a'$ are equivalent.

We will now compute $\D_a$ and $D_{\overline{G}}$ explicitly. If $\Gamma_0=\phi_{H}^{-1}((id,\bar{0}))$, we see that $\Gamma_0,\Gamma_1 \triangleleft \Gamma$ and $\Gamma/\Gamma_1\cong (\Gamma/\Gamma_0)/(\Gamma_1/\Gamma_0)\cong \E/A_n \cong \Z_m.$ Hence, $\Gamma_1\triangleleft\Gamma$ is the unique index-$m$ subgroup with $\phi_{H}(\Gamma_1)=A_n$, and the corresponding surface kernel map for the $H_a$-action is given by $\phi_{H_a}:\Gamma_1\to A_n$. Let $\Gamma$ be as in Equation (\ref{eqn:eqn1}), then $\phi_{H}(\xi_j)=(\sigma_j,\bar{x}_j)$ for $1\leq j\leq r$, as given in Proposition \ref{prop:wlp}. It is well known that if $\gamma$ is an elliptic generator of $\Gamma_1$, then $|\gamma|= c_j/t_j$, where $c_j=|\xi_j|$ and $t_j=\circ(\xi_j\Gamma_1)$ in $\Gamma/\Gamma_1$, for some unique $j$. Furthermore, the conjugacy class of $\gamma$ in $\Gamma$ splits into $[\Gamma:\Gamma_1]/t_j$ many conjugacy classes in $\Gamma_1$. Here $c_j=|\xi_j|= |(\sigma_j,\bar{x}_j)|=m_j$, and we have: 
$$t_j=|\xi_j\Gamma_1| = |(\sigma_j,\bar{x}_j)A_n| =|\bar{x}_j|,$$ where $[\Gamma:\Gamma_1]=m.$ Thus, whenever $\bar{x}_j \neq \bar{0}$ and $m_j \neq t_j$, each $((\sigma_j,\bar{x}_j);m_j,t_j)$ in $\D_e$ contributes an even permutation in $\D_a$ that lies in the conjugacy class of $(\sigma_j,\bar{x}_j)^{t_j}$ in $A_n$. Moreover, when $\bar{x}_j=\bar{0}$ and the conjugacy classes of $\sigma_j$ does not split in $A_n$, each $((\sigma_j,\bar{0}_j);m_j,1)$ in $\D_e$ contributes $m$ even permutations in $\D_a$ that lie in the unique conjugacy class of $\sigma_j$ in $A_n$.  Futhermore, when $\bar{x}_j=\bar{0}$ and the conjugacy classes of $\sigma_j$ splits in $A_n$, each $((\sigma_j,\bar{0}_j);m_j,1)$ in $\D_e$ contributes $m$ even permutations in $\D_a$ such that $\frac{m}{2}$ permutations lies in the conjugacy class of $\sigma_j$ in $A_n$ but the other $\frac{m}{2}$ permutations lie in the other conjugacy class uniquely determined by the cycle type of $\sigma_j$. Thus, $\D_a$ is determined up to equivalence as in Definition \ref{defn:eq_data_sets}. Finally, by a straightforward computation we obtain that $D_{\overline{G}}=(m,g_0;(c_1,t_1),\cdots,(c_r,t_r)),$ where $c_j$'s are given by the equation $x_j=\frac{m}{t_j}\cdot c_j$, for $1\leq j \leq r$, and $(c_j,t_j)$ is omitted if some $t_j=1$.
\end{proof}

\noindent An immediate consequence of Proposition \ref{prop:wls} is the following equivalent version of our main result.
\begin{theorem}[Main Theorem 2]\label{thm:main2}
 $\wlp$ forms a weak-liftable pair if and only if there exists an $\E$-data set $\D_e$ such that $\Psi([\D_e])= [\wlp]$.
\end{theorem}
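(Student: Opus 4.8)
The plan is to recognize that Theorem~\ref{thm:main2} is essentially a reformulation of Proposition~\ref{prop:wls}: the well-definedness of the map $\Psi$ supplies the backward implication, while its surjectivity supplies the forward one. So the proof amounts to unwinding these two properties of $\Psi$ into the biconditional asserted here, together with one appeal to the Birman--Hilden machinery.

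For the backward implication, I would suppose that an $\E$-data set $\D_e$ satisfies $\Psi([\D_e]) = [\wlp]$. By the construction of $\Psi$ in the proof of Proposition~\ref{prop:wls}, a representative $\E$-action $H < \homeo(S_g)$ of $\D_e$ carries a unique normal subgroup $H_a \cong A_n$, and the induced quotient $H/H_a \cong \Z_m$ is generated by a periodic mapping class $\overline{G} \in \map(\Orb_{H_a})$ of order $m$ which, being represented by an element of $H$, lifts to $H$ under $S_g \to \Orb_{H_a}$. Since $\Psi([\D_e])$ is by definition the equivalence class of the pair extracted from this $H_a$ and $\overline{G}$, the pair $\wlp$ meets conditions (i) and (ii) of Definition~\ref{defn:wlp} and is therefore weak-liftable.

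For the forward implication, I would exhibit the required $\D_e$ directly, which is exactly the surjectivity of $\Psi$. If $\wlp$ is weak-liftable, then Definition~\ref{defn:wlp} furnishes an $A_n$-action $H_a$ realizing $\D_a$ and a periodic $\overline{G} \in \map(\Orb_{H_a})$ of order $m$ that lifts under $S_g \to \Orb_{H_a}$ and whose cyclic data set and induced permutation are those recorded in $\wlp$. By the Birman--Hilden exact sequence~(\ref{eqn:eqn4}), the lift $G$ of $\overline{G}$ together with $H_a$ generates a finite subgroup $E < \map(S_g)$ fitting into $1 \to A_n \to E \to \Z_m \to 1$, and Proposition~\ref{prop:extension} identifies $E$ with $\E$ for a unique flag $i \in \{0,1\}$. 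As $E$ is finite it acts on $S_g$ by the Nielsen realization theorem, so Proposition~\ref{prop:wlp} assigns to this $E$-action an $\E$-data set $\D_e$; the explicit formulas for $\D_a$ and $D_{\overline{G}}$ derived at the close of the proof of Proposition~\ref{prop:wls} then confirm $\Psi([\D_e]) = [\wlp]$.

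The step demanding the most care is verifying that the combinatorial output of $\Psi$ genuinely reproduces the geometric data $D_{\overline{G}}$ and $\Pi_{\overline{G}}$ of the lifted cyclic quotient; but this identification is precisely the explicit computation already performed in Proposition~\ref{prop:wls}, so it requires no fresh argument. The only genuinely new input is that the extension $E$ obtained by lifting must be one of the two isomorphism types classified in Proposition~\ref{prop:extension}, which is what guarantees that the tuple produced is an $\E$-data set in the sense of Definition~\ref{defn:H_data_set} rather than merely an abstract extension.
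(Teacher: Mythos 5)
Your proposal is correct and matches the paper's approach: the paper presents Theorem~\ref{thm:main2} as an immediate consequence of Proposition~\ref{prop:wls}, with the backward implication coming from the well-definedness of $\Psi$ (its image consists of equivalence classes of weak-liftable pairs) and the forward implication from its surjectivity. Your write-up simply makes explicit the surjectivity argument (via the Birman--Hilden sequence, Proposition~\ref{prop:extension}, and Proposition~\ref{prop:wlp}) that the paper leaves implicit, which is consistent with its framework.
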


Note that the weak conjugacy class of free $A_n$-actions on $S_g$ is represented by the unique alternating data set
$\D_a=(n, g_0; -)$ for some $g_0\geq 2$. This implies $\frac{2-2g}{n!/2}=2-2g_0$, leading to $g= 1+ \frac{n!}{2}(g_0-1)$. The following corollary describes when such an action can be extended.
\begin{cor}\label{cor:cor1}
	For each $n,m,i$ with $n\geq5$ but $n\neq6$, $m\geq 2$, and $i\in \{0,1\}$, there exists a free $A_n$-action on $S_g$ having quotient orbifold genus $g_0$ that admits an extension to a free $\E$-action on $S_g$ if and only if $m$ divides $g_0-1$. Furthermore, there exists a free $A_n$-action on $S_g$ that extends to a non-free $\E$-action on $S_g$ if there exists a $\Z_m$-action on $S_{g_0}$ with quotient orbifold genus $\tilde{g_0}\geq2$.
\end{cor}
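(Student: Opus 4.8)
The plan rests on the elementary tower $S_g \to S_{g_0} \to \Orb_H$, in which the first map is the quotient by $A_n$ and the second is the quotient by $\Z_m\cong\E/A_n$, together with the basic dictionary that an action is free exactly when its quotient orbifold has no cone points. I keep in mind the standing constraint, recorded just above the statement, that a free $A_n$-action forces $g_0\geq 2$ (a non-abelian group cannot be a quotient of $\pi_1(S_0)$ or $\pi_1(S_1)$). For the forward direction of the biconditional, assume a free $A_n$-action with quotient genus $g_0$ extends to a free $\E$-action. Then $\Orb_H=S_{\tilde g_0}$ is a closed surface and $\Z_m$ acts on $S_{g_0}=S_g/A_n$; I would first verify this induced action is free, since a nontrivial element fixing an orbit $A_n\cdot x$ would lift to a nontrivial element of $\E$ of the form $a^{-1}g$ (with $a\in A_n$) fixing $x$, contradicting freeness of the $\E$-action. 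The Riemann--Hurwitz equation for the free $\Z_m$-cover $S_{g_0}\to S_{\tilde g_0}$ then reads $g_0-1=m(\tilde g_0-1)$, so $m\mid g_0-1$.

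For the converse, given $m\mid g_0-1$ with $g_0\geq 2$, put $\tilde g_0:=1+(g_0-1)/m$; since $g_0-1$ is a positive multiple of $m$ we have $\tilde g_0\geq 2$. I would realize the cone-point-free $\E$-data set $\D_e=((n,m,i),\tilde g_0;-)$: as $\E$ is two-generated (Proposition~\ref{prop:gen_direct} for $i=0$, Proposition~\ref{prop:gen_semidirect} for $i=1$) and $\tilde g_0\geq 2$ supplies at least four hyperbolic generators, one defines a surjection $\phi:\pi_1(S_{\tilde g_0})\twoheadrightarrow\E$ by sending $\alpha_1,\beta_1,\alpha_2,\beta_2$ to $x,y,y,x$ for a generating pair $(x,y)$ and the remaining generators to the identity, so that the product of commutators is trivial. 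By Theorem~\ref{thm:riemann} this is a surface kernel map for a free $\E$-action on $S_g$, and a one-line Riemann--Hurwitz computation confirms that its $A_n$-subaction is free with quotient genus exactly $g_0$.

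For the furthermore claim, start from a non-free $\Z_m$-action on $S_{g_0}$ with orbifold signature $(\tilde g_0;m_1,\dots,m_r)$, where $\tilde g_0\geq 2$ and $r\geq 1$, realized by a surface kernel map $\bar\phi:\Gamma\to\Z_m$ with $\bar\phi(\xi_j)=\bar c_j$ of order $m_j$ and $\sum_j\bar c_j=\bar 0$. The key idea is to lift $\bar\phi$ to $\phi:\Gamma\to\E$ while pinning each torsion generator inside the cyclic factor, $\phi(\xi_j):=(id,\bar c_j)$; then $|(id,\bar c_j)|=|\bar c_j|=m_j$, so every cone-point stabilizer injects into $\Z_m$ (i.e.\ $m_j=t_j$) and, by Proposition~\ref{prop:wls}, these generators contribute nothing to the alternating data set, which is therefore the free one $(n,g_0;-)$. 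I would route all of the $A_n$-surjectivity through the hyperbolic generators: set $\phi(\alpha_1),\phi(\beta_1)=(\sigma,\bar 0),(\tau,\bar 0)$ with $\langle\sigma,\tau\rangle=A_n$ and choose $\phi(\alpha_2),\phi(\beta_2)$ so that $[\phi(\alpha_2),\phi(\beta_2)]=([\sigma,\tau]^{-1},\bar 0)$, which is possible because every element of $A_n$ is a commutator (Ore's theorem~\cite{Ore}); the defining relation of $\Gamma$ is then satisfied since $\sum_j\bar c_j=\bar 0$. Theorem~\ref{thm:riemann} yields a non-free $\E$-action whose $A_n$-subaction is free with quotient genus $g_0$.

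The main obstacle is precisely the surjectivity bookkeeping in this last step: I must force every $\phi(\xi_j)$ into the subgroup $\{id\}\times\Z_m$ (to keep the $A_n$-cover unramified) while still generating all of $\E$ and respecting the single long relation of $\Gamma$. This is exactly where the hypothesis $\tilde g_0\geq 2$ is indispensable --- if $\tilde g_0=1$ only the two generators $\alpha_1,\beta_1$ are available, and with $\prod_j\phi(\xi_j)=(id,\bar 0)$ the relation would force $[\phi(\alpha_1),\phi(\beta_1)]=(id,\bar 0)$, making $\phi(\alpha_1),\phi(\beta_1)$ commute and hence unable to surject onto the non-abelian $A_n$.
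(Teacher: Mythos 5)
Your proof of the main biconditional is correct and follows the paper's route: the forward direction via freeness of the induced $\Z_m$-action on $S_{g_0}$ and Riemann--Hurwitz, and the converse via realizing the cone-point-free data set $((n,m,i),\tilde g_0;-)$ using two-generation of $\E$. Your extra verification that the induced $\Z_m$-action is free is a detail the paper elides, and it is sound.

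The ``furthermore'' part, however, has a genuine gap: the homomorphism $\phi:\Gamma\to\E$ you construct need not be surjective. Its image is $A_n\rtimes\langle\bar c_1,\dots,\bar c_r\rangle$, since you send $\alpha_1,\beta_1,\alpha_2,\beta_2$ into $A_n\times\{\bar 0\}$ and the only contribution to the $\Z_m$-direction comes from the elliptic images $(id,\bar c_j)$. For a cyclic data set with $\tilde g_0\geq 1$ there is no requirement that the rotation classes at the cone points generate $\Z_m$ (that lcm condition is imposed only when the quotient genus is $0$), so $\langle\bar c_1,\dots,\bar c_r\rangle$ can be a proper subgroup. Concretely, take $m=4$ and a $\Z_4$-action on $S_{g_0}$ with signature $(\tilde g_0;2,2)$ whose two cone points both carry the order-$2$ rotation $\bar 2\in\Z_4$: your $\phi$ then surjects only onto $A_n\rtimes\Z_2$, and the resulting group of homeomorphisms is not an $\E$-action at all. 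You correctly identified surjectivity as ``the main obstacle,'' but you only secured surjectivity onto the $A_n$-factor; the $\Z_m$-direction is left unaddressed. The paper's construction (inside the proof of Proposition~\ref{prop:wlp}, which the corollary invokes) avoids this by sending $\alpha_1,\beta_1$ to a generating \emph{pair of all of $\E$} supplied by Propositions~\ref{prop:gen_direct} and~\ref{prop:gen_semidirect}, not merely a generating pair of $A_n$; since the commutator of any two elements of $\E$ still lies in $A_n\times\{\bar 0\}$, Ore's theorem still provides $\phi(\alpha_2),\phi(\beta_2)\in A_n\times\{\bar 0\}$ correcting the long relation, and this is exactly the repair your argument needs (it uses $\tilde g_0\geq 2$ in the same way you do).
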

\begin{proof}Assume that there exists a free $A_n$-action on $S_g$ which can be extended to a free $\E$-action. Then, neccessarily, there exists a free $\Z_m$-action on $S_{g_0}$. Hence, by the Riemann-Hurwitz equation, $m$ divides $g_0-1$. Conversely, if $m$ divides $g_0-1$ then a representative free alternating action of $\D_a=(n, g_0; -)$ can be extended to a free $\E$-action since $\Psi([\D_e])=[\wlp]$, where $\D_e=((n,m,i),\tilde{g_0}; -)$ with $\tilde{g_0}=1+(g_0-1)/m$, $D_{\overline{G}}=(2,\tilde{g_0}; -)$, and $\Pi_{\overline{G}}= id$.
	
	To prove the second part, assume that there exists a $\Z_m$-action on $S_{g_0}$ whose conjugacy class is represented by $D_{\overline{G}'}=(m,\tilde{g_0}; (c_1,m_1), \dots, (c_r,m_r))$ with $\tilde{g_0}\geq2$. Then, we verify that $\D_e'=((n,m,i),\tilde{g_0};[(id,\overline{mc_1/m_1});m_1,m_1],\dots,[(id,\overline{mc_r/m_r});m_r,m_r])$ satisfy the hypothesis of an $\E$-data set. Furthermore, $\Psi([\D_e])=(\D_a',(D_{\overline{G'}}, \Pi_{\overline{G'}}))$. Hence, the assertion follows.
\end{proof}

\begin{exmp} Consider an $A_7\times\Z_{30}$-data set 
	$$\D_e=\bigl((7,30,0), 2; [(\sigma_1,\bar{x}_1);m_1,t_1],\dots, [(\sigma_5,\bar{x}_5);m_5,t_5]\bigr),$$
	where $(\sigma_1,\bar{x}_1)=((1~2~3),-\bar{1})$, $(\sigma_2,\bar{x}_2)=((1~2~3~4~5),-\bar{3})$, $(\sigma_3,\bar{x}_3)=((1~2)(3~4),-\bar{1})$, $(\sigma_4,\bar{x}_4)=((1~2~3)(4~5~6),\bar{0})$, and $(\sigma_5,\bar{x}_5)=(id,\bar{5})$.
	
To determine $\D_a$ and $D_{\overline{G}}$ corresponding to $\D_e$, we first compute that: $(m_1,t_1)=(30,30)$, $(m_2,t_2)=(10,10)$, $(m_3,t_3)=(30,30)$, $(m_4,t_4)=(3,1)$, and $(m_5,t_5)=(6,6)$. Since $m_j/t_j=1$ for $j=1,2,3,$ and $5$, they contribute nothing in $\D_a$. For $j=4$, $m_4/t_4=3$, hence it contributes $m/t_4=30$ cone points in $\D_a$, conjugate to $(1~2~3)(4~5~6)$ in $A_7$, of orders $3$. Thus, up to equivalence, $$\D_a=\bigl(7,86; [(1~2~3)(4~5~6);3]^{[30]}\bigr).$$ Furthermore, $D_{\overline{G}}=(30,2;(c_1,30),(c_2,10),(c_3,30),(c_5,6))$ where $c_i$'s are given by the equation: $c_i=\frac{t_i}{m}\cdot x_i$. Thus, $D_{\overline{G}}=(30,2;(29,30),(9,10),(29,30),(1,6))$. Finally, it is straightforward to see that $\Pi_{\overline{G}}=(1~2~\dots~30)$ is a $30$-cycle. Thus $\Psi([\D_e])= [\wlp]$.
\end{exmp}

\begin{exmp} Consider an $A_7\rtimes_{\varphi}\Z_{30}$-data set 
	$$\D_e=\bigl((7,30,1), 2; [(\sigma_1,\bar{x}_1);m_1',t_1'],\dots, [(\sigma_5,\bar{x}_5);m_5',t_5']\bigr),$$
where $(\sigma_1,\bar{x}_1)=((1~2~3),-\bar{1})$, $(\sigma_2,\bar{x}_2)=((1~2~3~4~5),-\bar{3})$, $(\sigma_3,\bar{x}_3)=((1~2)(3~4),-\bar{1})$, $(\sigma_4,\bar{x}_4)=((1~2~3)(4~5~6),\bar{0})$, and $(\sigma_5,\bar{x}_5)=(id,\bar{5})$. 

To determine $\D_a$ and $D_{\overline{G}}$ corresponding to $\D_e$, we compute that : $(m_1',t_1')=(30,30)$, $(m_2',t_2')=(20,10)$, $(m_3',t_3')=(30,30)$, $(m_4',t_4')=(3,1)$, and $(m_5',t_5')=(6,6)$. Since $m_j'/t_j'=1$ for $j=1,3,$ and $5$, they contribute nothing in $\D_a$. For $j=2$, $m_2'/t_2'=2$, hence it contributes $m/t_2'=3$ cone points in $\D_a$, conjugate to $(1~4)(3~5)$ in $A_7$, of orders $2$. For $j=4$, $m_4'/t_4'=3$, hence it contributes $m/t_4'=30$ cone points in $\D_a$, conjugate to $(1~2~3)(4~5~6)$ in $A_7$, of orders $3$. Thus, up to equivalence, $$\D_a=\bigl(7,86; [(1~4)(3~5);2]^{3},[(1~2~3)(4~5~6);3]^{30}\bigr).$$ Furthermore, $D_{\overline{G}}=(30,2;(29,30),(9,10),(29,30),(1,6))$, remains unchanged. Finally, it is straightforward to see that $\Pi_{\overline{G}}=(1~2~3)(4~5~\dots~33)$.
\end{exmp}

\section{Applications} 
\label{sec:apps}

\subsection{Lifting involutions under alternating covers}
\label{subsec:liftinv}
Consider an $H$-action on $S_g$, where $H \cong A_n$, and the induced branched cover $p: S_g \to \Orb_H= S_g/H$. For a liftable involution $\overline{G} \in \map(\Orb_H)$, the short exact sequence (\ref{eqn:eqn4}) restricts to
   \[1 \to \mathrm{Deck}(p) \to \widetilde{H} \to \langle \overline{G} \rangle \to 1,\tag{6} \label{eqn:eqn6}
        \]
where $\widetilde{H}$ is either isomorphic to $A_n \times \Z_2$ or $\Sigma_n$. To begin with, we determine when $\widetilde{H}$ is a symmetric group. For this purpose, we first define the following.

\begin{defn}
\label{defn:wls}
A weak-liftable pair $\wlp$ of degree $(n,2)$ is called a \textit{weak-liftable symmetric pair} (abbreviated as \textit{WLS-pair}) of degree $n$ if there exists a representative $H<\map(S_g)$ of $[\D_a]$ such that the following conditions hold. 
\begin{enumerate}[(i)] 
\item $\overline{G}$ lifts to a $G$ under the alternating cover $S_g \to S_g/H$.
\item $\langle H\cup\{G\} \rangle \cong \Sigma_n$.
\end{enumerate}
\end{defn}

It is implicit from the definition of a weak-liftable pair $\wlp$ of degree $(n,2)$ that $|\Pi_{\overline{G}}|$ divides $|\overline{G}|=2$. Further, the equivalence in Definition \ref{defn:eq_wl_pairs} can be adapted for the symmetric case as follows.

\noindent An immediate consequence of Proposition \ref{prop:wls} is the following.
\begin{theorem}\label{thm:wls}
For $n\geq5$ but $n\neq6$, $\wlp$ forms a WLS-pair of degree $n$ if and only if there exists a symmetric data set $\D_s$ of degree $n$ such that $\Psi([\D_s])= [\wlp]$.
\end{theorem}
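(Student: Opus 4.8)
The plan is to deduce Theorem~\ref{thm:wls} directly from Theorem~\ref{thm:main2} by specializing to the case $m=2$ and $i=1$, and then showing that in this regime the general weak-liftable pair machinery coincides precisely with the WLS-pair notion. First I would observe that a symmetric data set $\D_s$, as in Definition~\ref{defn:alt_sym_data_set}(b), is by definition exactly an $E_{n,2}^1$-data set, so applying Theorem~\ref{thm:main2} with $\E = E_{n,2}^1$ yields that $\wlp$ is a weak-liftable pair of degree $(n,2)$ if and only if there exists such a $\D_s$ with $\Psi([\D_s]) = [\wlp]$. The content of the theorem is therefore to identify when ``weak-liftable pair of degree $(n,2)$ arising from $E_{n,2}^1$'' is the same as ``WLS-pair'', which by Definition~\ref{defn:wls} amounts to checking that the lift $\widetilde{H}$ is the semidirect product $\Sigma_n = A_n \rtimes_\varphi \Z_2$ rather than the direct product $A_n \times \Z_2$.

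The key step is the dichotomy supplied by Proposition~\ref{prop:extension}(ii): for $m=2$ the extension $E_{n,2}$ is isomorphic either to $A_n \times \Z_2$ (the flag $i=0$) or to $A_n \rtimes_\varphi \Z_2 \cong \Sigma_n$ (the flag $i=1$). Thus setting $i=1$ in the data set is exactly the algebraic condition that pins down $\widetilde{H} \cong \Sigma_n$, matching condition (ii) of Definition~\ref{defn:wls}. I would then argue that the surface-kernel construction in the proof of Proposition~\ref{prop:wlp}, restricted to $\E = E_{n,2}^1$, realizes precisely an $H$-action together with a liftable involution $\overline{G}$ whose lift generates $\langle H \cup \{G\}\rangle \cong \Sigma_n$; this is condition (i) of the WLS-pair definition. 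Conversely, any WLS-pair gives an honest $\Sigma_n = A_n \rtimes_\varphi \Z_2$-action on $S_g$, which by Proposition~\ref{prop:wlp} and Proposition~\ref{prop:wls} produces an $E_{n,2}^1$-data set $\D_s$ with $\Psi([\D_s]) = [\wlp]$.

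The main obstacle I anticipate is the bookkeeping needed to confirm that the equivalence relation on symmetric data sets (the restriction of Definition~\ref{defn:eq_data_sets} to $m=2$, $i=1$, where $\Z_2^\times = \{\bar{1}\}$ forces $\ell = 1$) corresponds correctly to the WLS-pair equivalence adapted from Definition~\ref{defn:eq_wl_pairs}; in particular one must verify that the twist by $\beta(\bar{x}_j)$ appearing in condition (iv) of Definition~\ref{defn:eq_data_sets} for $i=1$ is compatible with the conjugacy description of $A_n \rtimes_\varphi \Z_2$ recorded in the final Remark of Section~\ref{sec:prelims}. Once this compatibility is checked, the theorem follows immediately, since the correspondence $\Psi$ of Proposition~\ref{prop:wls} is already established to be well-defined and surjective, and its source for $m=2$, $i=1$ is exactly the collection of equivalence classes of symmetric data sets.
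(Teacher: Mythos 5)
Your proposal is correct and follows essentially the same route as the paper, which presents Theorem~\ref{thm:wls} as an immediate consequence of Proposition~\ref{prop:wls} (equivalently, as the specialization of Theorem~\ref{thm:main2} to $m=2$, $i=1$, where $E_{n,2}^1 \cong \Sigma_n$ by Proposition~\ref{prop:extension} pins down condition (ii) of Definition~\ref{defn:wls}). Your additional care about matching the equivalence relations on symmetric data sets with the WLS-pair equivalence is a reasonable elaboration of details the paper leaves implicit, and it does not change the argument.
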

The Corollary~\ref{cor:cor1} can be adapted for the symmetric case as follows.
\begin{cor}\label{cor:cor3}
For each $n\geq5$ but $n\neq6$, there exists a free $A_n$-action on $S_g$ having quotient orbifold genus $g_0$ that  admits an extension to a free $\Sigma_n$-action on $S_g$ if and only if $g_0\geq 3$ is odd. Furthermore, there exists a free $A_n$-action on $S_g$ that extends to a non-free $\Sigma_n$-action on $S_g$ if $g_0$ is even.
\end{cor}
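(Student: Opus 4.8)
The plan is to specialize Corollary~\ref{cor:cor1} to the case $m = 2$, $i = 1$ (so that $E_{n,2}^1 \cong \Sigma_n$), and translate its arithmetic conditions into the stated parity conditions on $g_0$. Recall that a free $A_n$-action on $S_g$ with quotient orbifold genus $g_0$ is encoded by $\D_a = (n, g_0; -)$, and extending it to a free $\Sigma_n$-action amounts, by Corollary~\ref{cor:cor1}, to requiring that $m = 2$ divide $g_0 - 1$; for the non-free extension we need a $\Z_2$-action on $S_{g_0}$ with quotient orbifold genus $\tilde{g_0} \geq 2$.

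First I would handle the free case. Applying Corollary~\ref{cor:cor1} with $m = 2$, a free $A_n$-action extends to a free $\Sigma_n$-action if and only if $2 \mid (g_0 - 1)$, i.e. $g_0$ is odd. The only remaining point is to rule out the low value $g_0 = 1$: a free $A_n$-action requires $g_0 \geq 2$ (an orbifold of genus $g_0$ with no cone points must have $g_0 \geq 2$ to support a free action of a group of this size, as follows from the Riemann--Hurwitz constraint and the condition $g_0 \geq 2$ built into the free alternating data set $\D_a = (n, g_0; -)$). Hence the smallest admissible odd value is $g_0 = 3$, giving exactly the condition $g_0 \geq 3$ odd. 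Concretely, when $g_0$ is odd one produces the witnessing $E$-data set $\D_e = ((n,2,1), \tilde{g_0}; -)$ with $\tilde{g_0} = 1 + (g_0 - 1)/2$, paired with $D_{\overline{G}} = (2, \tilde{g_0}; -)$ and $\Pi_{\overline{G}} = \mathrm{id}$, exactly as in the proof of Corollary~\ref{cor:cor1}.

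Next I would treat the non-free case. By the second part of Corollary~\ref{cor:cor1}, a free $A_n$-action extends to a non-free $\Sigma_n$-action provided there exists a $\Z_2$-action on $S_{g_0}$ with quotient orbifold genus $\tilde{g_0} \geq 2$. The task is therefore purely to exhibit such a hyperelliptic-type involution whenever $g_0$ is even. The natural candidate is an order-$2$ action on $S_{g_0}$ whose quotient data set has the form $D_{\overline{G}'} = (2, \tilde{g_0}; (1,2)^{[k]})$ with a positive even number $k$ of fixed points; the Riemann--Hurwitz equation $2 - 2g_0 = 2(2 - 2\tilde{g_0} - k/2)$ shows that $g_0 = 2\tilde{g_0} - 1 + k/2$, so choosing $\tilde{g_0} = 2$ and $k = 2(g_0 - 3)$ realizes any even $g_0 \geq 4$ with $\tilde{g_0} = 2 \geq 2$ and $k \geq 2$. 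One then feeds this $\Z_2$-action into the construction $\D_e' = ((n,2,1), \tilde{g_0}; [(id, \bar{1}); 2, 2]^{[k]})$ from the proof of Corollary~\ref{cor:cor1} and checks it is a valid $E_{n,2}^1$-data set, so that $\Psi([\D_e']) = (\D_a', (D_{\overline{G}'}, \Pi_{\overline{G}'}))$ yields the desired non-free extension.

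The main obstacle I anticipate is the bookkeeping at the boundary values of $g_0$ rather than any conceptual difficulty: one must verify that the orbifold-genus constraints $\tilde{g_0} \geq 2$ (for the non-free case) and the free-action requirement $g_0 \geq 2$ are compatible with, and indeed force, the sharp thresholds ``$g_0 \geq 3$ odd'' and ``$g_0$ even'' rather than admitting spurious small exceptions. In particular, I would take care to confirm that $g_0 = 1$ is genuinely excluded in the free case and that the smallest even $g_0$ admitting the $\Z_2$-action with $\tilde{g_0} \geq 2$ is correctly identified, since these are the only places where a naive application of Corollary~\ref{cor:cor1} could mis-state the range.
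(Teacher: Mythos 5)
Your proposal is essentially identical to the paper's own (one-line) proof: the paper simply asserts that Corollary~\ref{cor:cor1} "can be adapted for the symmetric case," i.e.\ specialize to $m=2$, $i=1$ so that $E_{n,2}^1\cong\Sigma_n$, read $2\mid(g_0-1)$ as "$g_0$ odd" together with the constraint $g_0\geq 2$ for free alternating data sets, and for the non-free part feed a $\Z_2$-action on $S_{g_0}$ with $\tilde g_0\geq 2$ into the construction of Corollary~\ref{cor:cor1}; your explicit witness $D_{\overline{G}'}=(2,2;(1,2)^{[2(g_0-3)]})$ is a correct instantiation. The one boundary point you flag but do not resolve is real, though it is a defect of the statement (and of the paper's implicit argument) rather than of your proof alone: for $g_0=2$ the Riemann--Hurwitz relation $2\tilde g_0+k/2=3$ shows no $\Z_2$-action on $S_2$ has quotient genus $\tilde g_0\geq2$, so the route through Corollary~\ref{cor:cor1} only yields the non-free conclusion for even $g_0\geq 4$; covering $g_0=2$ would require a symmetric data set with $\tilde g_0\in\{0,1\}$, where the generation conditions of Definition~\ref{defn:H_data_set} must be checked separately.
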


The following is another criterion for weak liftability, derived from \cite[Theorem 3.2.]{broughton_normalizer}. 

\begin{prop}\label{prop:main4}
Let $\D_a$ be an alternating data set and $D_{\overline{G}}$ be a cyclic data set. Then $\wlp$ forms a weak-liftable pair if and only if there exists $H \in [\D_a]$ and a $\chi_\omega\in \mathrm{Aut}(A_n)$ such that $\phi_H \circ \overline{G}_{\ast} = \chi_\omega \circ \phi_H$. 
\end{prop}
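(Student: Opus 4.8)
The plan is to establish Proposition~\ref{prop:main4} as essentially a translation of the classical lifting criterion from \cite[Theorem 3.2.]{broughton_normalizer} into the language of weak-liftable pairs. The underlying principle is that a mapping class $\overline{G} \in \map(\Orb_H)$ lifts under the branched cover $p \colon S_g \to \Orb_H$ exactly when the outer action of $\overline{G}_{\ast}$ on $\pi_1^{orb}(\Orb_H)$ is compatible with the surface kernel map $\phi_H$, meaning that $\overline{G}_{\ast}$ preserves the kernel $\ker \phi_H$ so that it descends to an automorphism of $H \cong A_n$. First I would recall that by the Birman--Hilden exact sequence~(\ref{eqn:eqn4}), liftability of $\overline{G}$ is governed by whether the automorphism of $\pi_1^{orb}(\Orb_H)$ induced by a representative homeomorphism carries $\ker \phi_H$ to itself, which is precisely the content of Broughton's criterion.

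Next I would set up the correspondence explicitly. Given $H \in [\D_a]$, the surface kernel epimorphism $\phi_H \colon \pi_1^{orb}(\Orb_H) \to A_n$ has a fixed kernel, and $\overline{G}_{\ast}$ denotes the automorphism that $\overline{G}$ induces on $\pi_1^{orb}(\Orb_H)$ (well-defined up to inner automorphisms and the choice of basepoint). The equation $\phi_H \circ \overline{G}_{\ast} = \chi_\omega \circ \phi_H$ asserts exactly that $\overline{G}_{\ast}$ descends through $\phi_H$ to the automorphism $\chi_\omega$ of $A_n$; in particular this forces $\overline{G}_{\ast}(\ker \phi_H) = \ker \phi_H$. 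For the forward direction, I would argue that if $\wlp$ is a weak-liftable pair, then by Definition~\ref{defn:wlp} there is a representative $H$ and a periodic $\overline{G}$ of order $m$ that genuinely lifts; applying \cite[Theorem 3.2.]{broughton_normalizer} then yields the required $\chi_\omega \in \aut(A_n)$ making the diagram commute. Conversely, if such a $\chi_\omega$ exists, the same theorem guarantees that $\overline{G}_{\ast}$ preserves $\ker \phi_H$, hence $\overline{G}$ lifts under $p$; one then reads off that $D_{\overline{G}}$ and $\Pi_{\overline{G}}$ are the data recorded in the pair, so $\wlp$ is weak-liftable.

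The step I expect to require the most care is verifying that the intertwining relation $\phi_H \circ \overline{G}_{\ast} = \chi_\omega \circ \phi_H$ is the correct reformulation of Broughton's hypothesis in our setting, given that $\overline{G}_{\ast}$ is only well-defined up to inner automorphisms of $\pi_1^{orb}(\Orb_H)$ and that the target $\chi_\omega$ is likewise determined only up to $\mathrm{Inn}(A_n)$. Since $Z(A_n) = \{\mathrm{id}\}$ for $n \geq 5$, $n \neq 6$, inner automorphisms of $A_n$ are faithfully detected, so the ambiguity on both sides matches up cleanly, and the existence of \emph{some} $\chi_\omega$ satisfying the relation is an invariant of the weak conjugacy class $[\D_a]$; this is what allows the criterion to be stated in terms of $[\D_a]$ rather than a specific representative. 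The remaining verification that the lift $\overline{G}_{\ast}$ is order-preserving on torsion and hence yields a genuine periodic lift $G$ follows from the compatibility of $\phi_H$ with the branching data, which is already encoded in the alternating data set $\D_a$ via Theorem~\ref{thm:riemann}. Thus the proof reduces to carefully matching Broughton's normalizer condition with Definition~\ref{defn:wlp}, and this matching is the crux of the argument.
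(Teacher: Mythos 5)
Your proposal is correct and matches the paper's approach: the paper gives no explicit proof of Proposition~\ref{prop:main4}, stating only that it is ``derived from'' Broughton's lifting criterion \cite[Theorem 3.2]{broughton_normalizer}, and your argument is precisely the standard unpacking of that attribution (liftability of $\overline{G}$ is equivalent to $\overline{G}_{\ast}$ preserving $\ker\phi_H$, i.e.\ descending to some $\chi_\omega\in\aut(A_n)$). Your attention to the ambiguity of $\overline{G}_{\ast}$ and $\chi_\omega$ up to inner automorphisms, resolved by $Z(A_n)=\{\mathrm{id}\}$, supplies detail the paper leaves implicit.
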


Let $\D_a=\bigl(n,g_0; [\sigma_1;m_1], \dots, [\sigma_r;m_r]\bigr)$, let $\{k_{1j},\ldots,k_{\ell_jj}\}$ be the multiset of the lengths of distinct cycles in the canonical decomposition of $\sigma_i$ into disjoint cycles, and let 
$$\Lambda = \left\{j \in \{1,2, \dots, r\} \, \Biggl\vert\,\begin{matrix*}[l]
k_{1j},\dots , k_{\ell_jj}~\text{are distinct odd}\\
\text{integers and}~\displaystyle\sum_{p=1}^{\ell_j}\,k_{pj}\,\geq n-1
\end{matrix*}
\right\}.$$ The following proposition gives a necessary condition of forming a weak liftable pair, follows directly from Theorem \ref{thm:zieschang} and Proposition \ref{prop:main4}.
\begin{prop}\label{prop:admissible_permutation}
Let $\wlp$ form a weak-liftable pair. Then
\begin{enumerate}[(i)]
\item $\sigma_j$ and $\sigma_{\Pi_{\overline{G}}(j)}$ have the same cycle type, and

\item exactly one of the following conditions holds:
    \begin{enumerate}[(a)]
    \item $\sigma_j$ is conjugate to $\sigma_{\Pi_{\overline{G}}(j)}$ in $A_n$ for all $j\in \Lambda$.
    \item $\sigma_j$ is not conjugate to $\sigma_{\Pi_{\overline{G}}(j)}$ in $A_n$ for all $j\in \Lambda$.
    \end{enumerate}
\end{enumerate}
\end{prop}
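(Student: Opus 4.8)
The plan is to derive this as a direct consequence of the automorphism-level characterization in Proposition~\ref{prop:main4} combined with the Zieschang rigidity result (Theorem~\ref{thm:zieschang}). Suppose $\wlp$ forms a weak-liftable pair. By Proposition~\ref{prop:main4}, there exist $H \in [\D_a]$ and $\chi_\omega \in \aut(A_n)$ with $\phi_H \circ \overline{G}_{\ast} = \chi_\omega \circ \phi_H$. Here $\overline{G}_{\ast}$ is the induced automorphism of $\pi_1^{orb}(\Orb_H) = \Gamma$; since $\overline{G}$ permutes the cone points according to $\Pi_{\overline{G}}$, its action on the elliptic generators $\xi_j$ of $\Gamma$ sends $\xi_j$ (up to conjugacy) to $\xi_{\Pi_{\overline{G}}(j)}$, by Theorem~\ref{thm:zieschang}. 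Applying $\phi_H$ and using the intertwining relation, we obtain that $\phi_H(\xi_{\Pi_{\overline{G}}(j)}) = \sigma_{\Pi_{\overline{G}}(j)}$ is conjugate in $A_n$ to $\chi_\omega(\sigma_j)$.

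For part~(i), I would observe that $\chi_\omega \in \aut(A_n) \cong \Sigma_n$ is realized by conjugation by some $\tau_\omega \in \Sigma_n$. Conjugation by any element of $\Sigma_n$ preserves cycle type as a permutation of $\{1,\dots,n\}$, so $\chi_\omega(\sigma_j)$ has the same cycle type as $\sigma_j$. Since $\sigma_{\Pi_{\overline{G}}(j)}$ is $A_n$-conjugate (hence a fortiori $\Sigma_n$-conjugate) to $\chi_\omega(\sigma_j)$, it follows that $\sigma_j$ and $\sigma_{\Pi_{\overline{G}}(j)}$ share the same cycle type, establishing~(i).

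For part~(ii), the key point is the dichotomy governing when $\Sigma_n$-conjugacy refines into $A_n$-conjugacy. It is classical that a permutation $\sigma$ has its $\Sigma_n$-conjugacy class split into two distinct $A_n$-classes precisely when its cycle type consists of distinct odd parts (covering all of $\{1,\dots,n\}$, counting fixed points); otherwise the $\Sigma_n$-class coincides with a single $A_n$-class. The index set $\Lambda$ is designed to capture exactly those $j$ for which $\sigma_j$ lies in a splitting class: the condition that $k_{1j},\dots,k_{\ell_j j}$ be distinct odd integers with $\sum_p k_{pj} \geq n-1$ forces the cycle type (after accounting for at most one fixed point) to have distinct odd parts. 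For $j \in \Lambda$, whether $\sigma_{\Pi_{\overline{G}}(j)} = \chi_\omega(\sigma_j)$ (up to $A_n$-conjugacy) lands in the same $A_n$-class as $\sigma_j$ or the opposite one is determined by the single parity $\sgn(\tau_\omega)$: if $\tau_\omega \in A_n$ then conjugation preserves each $A_n$-class, while if $\tau_\omega \notin A_n$ it swaps the two split classes. Since this parity is a global feature of the single automorphism $\chi_\omega$ and does not depend on $j$, the same alternative (a) or (b) must hold simultaneously for all $j \in \Lambda$, which is precisely~(ii).

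The main obstacle I anticipate is making the ``$\Lambda$ detects splitting classes'' step fully rigorous, namely verifying that the stated combinatorial condition on $\{k_{1j},\dots,k_{\ell_j j}\}$ is genuinely equivalent to $\sigma_j$ lying in a class that splits in $A_n$. One must be careful that the listed lengths are the lengths of the \emph{distinct} cycles in the canonical decomposition and that the bound $\sum_p k_{pj} \geq n-1$ correctly accounts for the possibility of a single fixed point (a $1$-cycle), so that the full cycle type over all $n$ points indeed consists of distinct odd parts. Once this equivalence is pinned down, the uniform parity argument closes the proof cleanly; the remaining steps are routine applications of the earlier results.
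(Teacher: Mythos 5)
Your argument is correct and follows exactly the route the paper intends: the paper gives no written proof but states that the proposition ``follows directly from Theorem~\ref{thm:zieschang} and Proposition~\ref{prop:main4},'' and your proposal fills in precisely those details — the intertwining relation $\phi_H \circ \overline{G}_{\ast} = \chi_\omega \circ \phi_H$, Zieschang's rigidity to identify the permutation of elliptic generators with $\Pi_{\overline{G}}$, realization of $\chi_\omega$ by conjugation in $\Sigma_n$ for (i), and the uniform parity of $\tau_\omega$ together with the classical $A_n$-splitting criterion (which is exactly what $\Lambda$ encodes) for (ii). No gaps; this matches the paper's approach.
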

\noindent As a consequence of Proposition \ref{prop:admissible_permutation}, we have the following.
\begin{cor}\label{cor:self_normalizing}
	Let $\D_a=\bigl(n,g_0; [\sigma_1;m_1], \dots, [\sigma_r;m_r]\bigr)$ such that $\sigma_i$ is not conjugate to $\sigma_j$ in $\Sigma_n$ for $1\leq i,j\leq r$. Then any representative $H<\map(S_g)$ of $[\D_a]$ does not extend to any $\E$-action. Moreover, any representative $H<\map(S_g)$ of $[\D_a]$ is self-normalizing in $\map(S_g)$ whenever $r=3$.
\end{cor}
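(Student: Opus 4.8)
The plan is to deduce both assertions directly from Proposition~\ref{prop:admissible_permutation}. The hypothesis is that the permutations $\sigma_1, \dots, \sigma_r$ in the alternating data set $\D_a$ are pairwise non-conjugate in $\Sigma_n$; in particular they have pairwise distinct cycle types. First I would address the non-extension claim. Suppose, for contradiction, that some representative $H < \map(S_g)$ of $[\D_a]$ extends to an $\E$-action. Then $H$ arises as the alternating component of a weak-liftable pair, so by Theorem~\ref{thm:main2} (equivalently, by the existence part of Definition~\ref{defn:wlp}) there is a nontrivial periodic $\overline{G} \in \map(\Orb_H)$ that lifts, inducing a permutation $\Pi_{\overline{G}} \in \Sigma_r$ on the cone points. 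By Proposition~\ref{prop:admissible_permutation}(i), $\sigma_j$ and $\sigma_{\Pi_{\overline{G}}(j)}$ must have the same cycle type for every $j$. Since distinct $\sigma_j$ have distinct cycle types by hypothesis, this forces $\Pi_{\overline{G}}(j) = j$ for all $j$, i.e.\ $\Pi_{\overline{G}} = \mathrm{id}$.

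The point I would then make is that $\Pi_{\overline{G}} = \mathrm{id}$ still leaves open the possibility that $\overline{G}$ acts nontrivially while fixing the cone-point set pointwise. Here I would invoke Proposition~\ref{prop:admissible_permutation}(ii): with $\Pi_{\overline{G}} = \mathrm{id}$, each $\sigma_j$ is trivially conjugate to $\sigma_{\Pi_{\overline{G}}(j)} = \sigma_j$ in $A_n$, so condition (ii)(a) holds vacuously and no contradiction is reached at this level. Thus the combinatorial obstruction from cycle types alone is \emph{not} quite enough, and I anticipate this to be the main subtlety: I would need to rule out a nontrivial lift that fixes every cone point, or else reinterpret the claim so that ``extend to an $\E$-action'' is understood to require $m \geq 2$ with a genuinely nontrivial $\overline{G}$. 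The resolution I would pursue is to examine the constraint more carefully via Proposition~\ref{prop:main4}: an extension requires $\chi_\omega \in \aut(A_n) \cong \Sigma_n$ with $\phi_H \circ \overline{G}_\ast = \chi_\omega \circ \phi_H$, and with $\Pi_{\overline{G}} = \mathrm{id}$ one forces $\chi_\omega$ to fix the conjugacy class of each $\phi_H(\xi_j)$; since the $\sigma_j$ generate $A_n$ and are pairwise of distinct cycle type, $\chi_\omega$ must be realized by conjugation by an element of $\Sigma_n$ fixing each generator up to conjugacy, which for generic data constrains $\omega$ enough to contradict nontriviality of the induced action.

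For the self-normalizing statement when $r = 3$, I would argue as follows. The normalizer $N_{\map(S_g)}(H)$ acts on $H$, and modulo $H$ this action is recorded precisely by liftable mapping classes in $\map(\Orb_H)$, i.e.\ by the liftable symmetries; any element of $N_{\map(S_g)}(H) \setminus H$ would descend to a nontrivial periodic $\overline{G} \in \mathrm{LMod}(\Orb_H)$ and thereby produce an extension of $H$ to some $\E$-action. But the first part shows no such extension exists, so $N_{\map(S_g)}(H) = H$, which is the self-normalizing property. The role of the hypothesis $r = 3$ is to control the orbifold: with exactly three cone points the orbifold $\Orb_H$ is a sphere with three marked points (when $g_0 = 0$), whose mapping class group of orientation-preserving symmetries fixing marked-point data is finite and in fact forces any nontrivial $\overline{G}$ to permute the three cone points nontrivially, which is again excluded by the distinct-cycle-type hypothesis via Proposition~\ref{prop:admissible_permutation}(i).

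The main obstacle, as flagged above, is pinning down that $\Pi_{\overline{G}} = \mathrm{id}$ genuinely forces $\overline{G}$ to be trivial (equivalently that no nontrivial rotation can fix all cone points while inducing the required automorphism of $A_n$); I would handle this by combining the generation property of the $\sigma_j$ with the fact that $\aut(A_n) \cong \Sigma_n$ acts faithfully, so a would-be nontrivial $\chi_\omega$ fixing every $\sigma_j$ up to $A_n$-conjugacy is pinned down by Theorem~\ref{thm:zieschang} to be inner, and then the Riemann--Hurwitz constraint on $D_{\overline{G}}$ rules out a nontrivial such lift for the relevant signatures.
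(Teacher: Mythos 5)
Your first step is right and is exactly the intended use of Proposition~\ref{prop:admissible_permutation}: part (i) together with the pairwise distinct cycle types forces $\Pi_{\overline{G}}=\mathrm{id}$. You are also right to flag that this alone yields no contradiction; but your proposed way of closing the gap (forcing $\chi_\omega$ to be inner via Proposition~\ref{prop:main4} and Theorem~\ref{thm:zieschang} and then appealing to ``generic data'' and Riemann--Hurwitz) is not an argument --- it never produces a contradiction. The step that actually finishes the proof is purely topological, and you only deploy it in your second half: when $g_0=0$ (the setting of the paper's application, where hyperbolicity forces $r\geq 3$), a nontrivial finite-order orientation-preserving homeomorphism of the sphere has exactly two fixed points, so $\overline{G}$ cannot fix all $r\geq 3$ cone points, i.e.\ $\Pi_{\overline{G}}\neq \mathrm{id}$, a contradiction. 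The same count gives the self-normalizing claim: for $r=3$ and $g_0=0$ the group $\map(S_{0,3})$ acts faithfully on the three marked points, so Proposition~\ref{prop:admissible_permutation}(i) forces $\mathrm{LMod}(\Orb_H)=1$, and $N_{\map(S_g)}(H)=H$ by the Birman--Hilden sequence~(\ref{eqn:eqn4}); your reduction of self-normalization to liftability is correct there.

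Your suspicion that the cycle-type obstruction is ``not quite enough'' is in fact stronger than you realize: for $g_0\geq 1$ the first assertion fails, so no argument can close the gap at the stated level of generality. Concretely, take $a,b,c\in A_5$ of orders $2,3,5$ with $cab=\mathrm{id}$ and $\langle a,b\rangle=A_5$, and consider the $E_{5,3}^{0}$-data set $\D_e=\bigl((5,3,0),0;[(c,\bar{1});15,3],[(a,\bar{1});6,3],[(b,\bar{1});3,3]\bigr)$ of genus $40$. Since every $t_j$ equals $m=3$, each tuple contributes $m/t_j=1$ cone point to the alternating component, and the recipe in the proof of Proposition~\ref{prop:wls} gives $\D_a=\bigl(5,1;[c^{3};5],[a;2]\bigr)$: a torus quotient with two cone points whose monodromies (a $5$-cycle and a double transposition) are not conjugate in $\Sigma_5$, yet this $A_5$-action visibly extends to an $A_5\times\Z_3$-action. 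So the corollary implicitly requires $g_0=0$, and any correct proof must run through the sphere fixed-point count rather than the automorphism-theoretic route you sketch.
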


%

\noindent Now we will show that any involution on $S_{g_0}$ having quotient orbifold genus $\tilde{g}_0\geq 1$, weakly lifts under a free alternating cover: $S_g \overset{/A_n}{\longrightarrow} S_{g_0}$.
\begin{cor}
Let $\D_a=(n,g_0;-)$ be of genus $g$ and $D_{\overline{G}}=(2,\tilde{g}_0;(1,2),\overset{\ell}{\cdots},(1,2))$ be of genus $g_0$ with $\Pi_{\overline{G}}=id$ and $\tilde{g}_0 \geq 1$. Then $\wlp$ forms a WLS-pair, and hence a weak-liftable pair.
\end{cor}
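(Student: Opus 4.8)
The plan is to produce a symmetric data set $\D_s$ whose image under the map $\Psi$ of Proposition~\ref{prop:wls} is $[\wlp]$, and then to invoke Theorem~\ref{thm:wls}. Before constructing it, I would record that $\ell$ is automatically even: the Riemann--Hurwitz equation for the involution $\overline{G}$ reads $2-2g_0 = 2(2-2\tilde g_0)-\ell$, so $\ell = 2+2g_0-4\tilde g_0$. With this in hand, I take as candidate
$$\D_s = \bigl((n,2,1),\,\tilde g_0;\; [(id,\bar 1);2,2]^{[\ell]}\bigr),$$
where $(id,\bar 1)\in E_{n,2}^1$ is the nontrivial element corresponding to the transposition $(1~2)$ under $E_{n,2}^1\cong\Sigma_n$; it has order $m_j=2$, and $\bar 1$ has order $t_j=2$ in $\Z_2$, so condition~(ii) of Definition~\ref{defn:H_data_set} holds.

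Next I would check that $\D_s$ is a bona fide $E_{n,2}^1$-data set. For condition~(i), its Riemann--Hurwitz equation is $\tfrac{2-2g}{n!}=2-2\tilde g_0-\ell/2$; substituting the genus $g=1+\tfrac{n!}{2}(g_0-1)$ coming from $\D_a=(n,g_0;-)$ collapses the left-hand side to $1-g_0$, which matches the right-hand side by the Riemann--Hurwitz relation for $\overline G$. The remaining conditions split by orbifold genus. If $\tilde g_0\geq 2$, only condition~(v) is required, and $\sum_{j=1}^{\ell}\bar 1=\bar\ell=\bar 0$ because $\ell$ is even. If $\tilde g_0=1$ (in which case $g\geq 2$ and parity of $\ell$ force $\ell\geq 2$), condition~(iv) applies; since $\prod_{j=1}^{\ell}(id,\bar 1)=(id,\bar 0)$ again by parity of $\ell$, I would realize this trivial product as the commutator of two commuting handle generators that nonetheless generate the group together with the cone elements. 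Concretely, set $\phi(\alpha_1)=\phi(\beta_1)=(c,\bar 0)$, where $c\in A_n$ is an even permutation with $\langle c,(1~2)\rangle=\Sigma_n$ --- one may take $c=(1~2~\cdots~n)$ for $n$ odd and $c=(2~3~\cdots~n)$ for $n$ even. Then $[\phi(\beta_1),\phi(\alpha_1)]=(id,\bar 0)$ equals the cone product, and the cone elements together with $(c,\bar 0)$ generate $\langle (1~2),c\rangle=\Sigma_n$, so~(iv) holds.

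Finally I would compute $\Psi([\D_s])$ by the recipe in the proof of Proposition~\ref{prop:wls}. Each cone pair has $\bar x_j=\bar 1\neq\bar 0$ but $m_j=t_j$, so by the contribution rule none of them produces a cone point of the alternating quotient; hence $\D_a$ is free, and its orbifold genus is the genus $g_0$ of the surface $\Orb_{H_a}$ carrying $\overline G$, giving $\D_a=(n,g_0;-)$. Since $\D_a$ has no cone points, $\Pi_{\overline G}=id$. The cyclic part is recovered from $x_j=\tfrac{m}{t_j}c_j$, i.e. $c_j=1$, yielding $D_{\overline G}=(2,\tilde g_0;(1,2)^{[\ell]})$. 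Thus $\Psi([\D_s])=[\wlp]$, and as $\D_s$ is symmetric ($m=2$, $i=1$), Theorem~\ref{thm:wls} shows $\wlp$ is a WLS-pair, hence a weak-liftable pair. The one genuinely delicate point is the orbifold-genus-one case: for $\tilde g_0\geq 2$ generation is automatic from the surface-kernel construction behind Proposition~\ref{prop:wlp}, whereas for $\tilde g_0=1$ I must explicitly exhibit handle generators whose commutator is the prescribed (trivial) cone product while still generating all of $\Sigma_n$, which is exactly why the existence of an even $c$ with $\langle c,(1~2)\rangle=\Sigma_n$ is needed.
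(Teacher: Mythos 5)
Your proof is correct and follows essentially the same route as the paper: both construct the symmetric data set $\D_s=\bigl((n,2,1),\tilde g_0;[(id,\bar 1);2,2]^{[\ell]}\bigr)$, verify it satisfies Definition~\ref{defn:H_data_set} using the evenness of $\ell$, and conclude via $\Psi([\D_s])=[\wlp]$. The only difference is that you spell out details the paper leaves implicit (the Riemann--Hurwitz check and the explicit handle generators in the $\tilde g_0=1$ case), and these verifications are sound.
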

\begin{proof}
Consider the tuple $((n,2,1),\tilde{g}_0;[(id,\bar{1});2,2]^{[\ell]})$. Then by of a cyclic data set, $\ell$ must be even. Hence, $\D_s:=((n,2,1),\tilde{g}_0;[(id,\bar{1});2,2]^{[\ell]})$ satisfies the conditions of a symmetric data set since $\prod_{j = 1}^{\ell} (id,\bar{1})$ is identity, which is an even permutation. Finally, it follows from Proposition \ref{prop:wls} that $\Psi([\D_s])=[\wlp]$.
\end{proof}

\noindent Thus, the following result is obtained directly from Theorem~\ref{thm:wls} and Proposition~\ref{prop:extension}.
\begin{theorem}\label{thm:alternating_extensions}
For $n\geq5$ but $n\neq 6$, consider an $H<\map(S_g)$ such that $H\cong A_n$ and let $\D_a$ be an alternating data set representing the weak conjugacy class of the $H$-action. Then there exists an $H'<\map(S_g)$ that is weakly conjugate to $H$ and an $H''<\map(S_g)$ with $[H'':H']=2$ if and only if  $\wlp$ is a weak-liftable pair.  Furthermore:
\begin{enumerate}[(i)]
\item $H''$ can be chosen to be isomorphic to $\Sigma_n$ if and only if  $\wlp$ forms a WLS-pair, and
\item $H''$ can be chosen to be isomorphic to $A_n \times \Z_2$ if $\wlp$ is a weak-liftable pair that is not WLS-pair.
\end{enumerate} 
\end{theorem}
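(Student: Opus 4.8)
The plan is to assemble Theorem~\ref{thm:alternating_extensions} by feeding the characterization of weak-liftable pairs (Theorem~\ref{thm:wls}) into the algebraic structure of the extensions supplied by Proposition~\ref{prop:extension}. The key observation is that an $H'' < \map(S_g)$ with $[H'':H']=2$, where $H' \cong A_n$, fits into a short exact sequence $1 \to A_n \to H'' \to \Z_2 \to 1$. Since $Z(A_n)=\{\mathrm{id}\}$ for $n\geq 5$, $n\neq 6$, Proposition~\ref{prop:extension}(ii) forces $H''$ to be isomorphic to exactly one of $\Sigma_n = A_n\rtimes_\varphi \Z_2$ or $A_n\times\Z_2$; there is no third possibility. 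This dichotomy is precisely the engine driving the ``furthermore'' clauses.

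First I would establish the main biconditional. Given the existence of $H'$ weakly conjugate to $H$ together with an index-two overgroup $H''$, the Birman--Hilden sequence~(\ref{eqn:eqn4}) restricts, as in~(\ref{eqn:eqn6}), to exhibit an involution $\overline{G}\in\map(\Orb_{H'})$ that lifts under the alternating cover $S_g\to S_g/H'$; reading off its cyclic data set $D_{\overline{G}}$ and its induced permutation $\Pi_{\overline{G}}$ on the cone points shows that $\wlp$ is a weak-liftable pair of degree $(n,2)$. Conversely, if $\wlp$ is a weak-liftable pair, then by Definition~\ref{defn:wlp} there is a representative $H'\in[\D_a]$ and a liftable periodic $\overline{G}\in\map(\Orb_{H'})$ of order $2$; its lift $G$ together with $H'$ generates the desired index-two overgroup $H''$. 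This is the core of the equivalence, and it is essentially a transcription of Definition~\ref{defn:wlp} through the exact sequence.

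For the refinement into cases~(i) and~(ii), I would invoke the dichotomy above. When $H''\cong \Sigma_n$, the extension~(\ref{eqn:eqn6}) is exactly the symmetric one, so by Theorem~\ref{thm:wls} the pair $\wlp$ is realized by a symmetric data set $\D_s$ (with $m=2$, $i=1$) under $\Psi$, which is the definition of a WLS-pair; conversely a WLS-pair yields $\langle H'\cup\{G\}\rangle\cong\Sigma_n$ directly from Definition~\ref{defn:wls}. This gives the iff in~(i). Case~(ii) is then the complementary alternative: if $\wlp$ is a weak-liftable pair that is \emph{not} a WLS-pair, then $H''$ cannot be $\Sigma_n$, so the only remaining option permitted by Proposition~\ref{prop:extension}(ii) is $H''\cong A_n\times\Z_2$, and one selects that realization.

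The step I expect to require the most care is verifying that the liftability data genuinely distinguishes the two isomorphism types, i.e.\ that being a WLS-pair is equivalent to the lifted extension splitting via the outer automorphism $\chi_{(1\,2)}$ rather than trivially. This hinges on the bijection $\Psi$ of Proposition~\ref{prop:wls} respecting the flag $i\in\{0,1\}$ encoded in the $\E$-data set, so that the symmetric data sets ($i=1$) map to exactly the WLS-pairs and the direct-product data sets ($i=0$) map to the complementary weak-liftable pairs. Once one confirms that $\Psi$ preserves this flag—which is implicit in how $\D_a$ and $(D_{\overline{G}},\Pi_{\overline{G}})$ are recovered from $\D_e$ in the proof of Proposition~\ref{prop:wls}—the two cases partition the weak-liftable pairs cleanly, and the theorem follows. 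Note that~(ii) is stated only as a one-directional implication, which is consistent with the fact that a single weak-liftable pair may admit both types of extension; the claim merely guarantees the $A_n\times\Z_2$ realization exists whenever the WLS obstruction fails.
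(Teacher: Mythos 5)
Your proposal is correct and follows essentially the same route as the paper, which derives the theorem directly from Theorem~\ref{thm:wls} and Proposition~\ref{prop:extension}: the Birman--Hilden restriction~(\ref{eqn:eqn6}) together with Definition~\ref{defn:wlp} gives the main biconditional, the triviality of $Z(A_n)$ forces the $\Sigma_n$ versus $A_n\times\Z_2$ dichotomy, and Theorem~\ref{thm:wls} isolates the WLS case. Your added remarks on the one-directional nature of (ii) and on $\Psi$ respecting the flag $i$ are consistent with the paper's (unwritten) argument and do not change the approach.
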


\subsubsection{Examples of liftable involutions}
In this section, we explore the liftability of involutions under the alternating covers induced by the alternating actions on $S_g$.

\begin{exmp}[Icosahedral action]
\label{eg:icosa}
Consider the $A_5$-action on $S_{19}$ whose weak conjugacy class is represented by 
\begin{center}
$\D_a=(5,0;[(1~2)(3~4);2]^{[2]}, [(1~5~4~3~2);5], [(1~2~3~4~5);5])$.
\end{center}
\noindent Any periodic mapping class in $\map(\Orb_H)$ for $H \in \D_a$ has a unique cyclic data set $D=(2,0;(1,2)^{[2]})$. By Proposition~\ref{prop:admissible_permutation}, any liftable involution can only induce the permutations $\Pi_{\overline{G}_1}=(1~2)$, $\Pi_{\overline{G}_2}=(3~4)$, and $\Pi_{\overline{G}_3}=(1~2)(3~4)$ in $\Sigma_4$. The quotient orbifolds for these permutations have the signatures $(0;2,10,10)$, $(0;4,4,5)$, and $(0;2,2,2,5)$, respectively. By Definition~\ref{defn:alt_sym_data_set}, the signatures $(0;4,4,5)$ and $(0;2,2,2,5)$ give rise to unique symmetric data sets, up to equivalence, represented by
\begin{gather*}
\D_s = ((5,2,1),0;[((1~4)(2~5),\bar{1});4,2], [((2~1~4~3~5),\bar{1});4,2], [((1~2~3~4~5),\bar{0});5,1]) \\ \text{and} \\
\D_s' = ((5,2,1),0;[((1~2~4),\bar{1});2,2], [((1~2~5),\bar{1});2,2], [((1~3)(2~4),\bar{0});2,1], [((1~2~4~5~3),\bar{0});5,1]), 
\end{gather*}
respectively. Consequently,
$\Psi([\D_s])=[(\D_a,(D_{\overline{G}_2}, \Pi_{\overline{G}_2}))]$, and $\Psi([\D_s'])= [(\D_a,(D_{\overline{G}_3}, \Pi_{\overline{G}_3}))]$ with $D_{\overline{G}_2}=D_{\overline{G}_3}=D$. The two $\Z_2$-actions $D_{\overline{G}_2}$ and $D_{\overline{G}_3}$ on $\Orb_H$ are shown in Figure \ref{fig:fig4}, where labeled pair $(z,n)$ represents a cone point $z$ along with its order $n$. The signature $(0;2,10,10)$ does not correspond to a symmetric data set of degree $5$ since no element in $\Sigma_5$ has order $10$. However, it can be verified that the signature $(0;2,10,10)$ corresponds to an $E_{4,2}^0$-data set, resulting in an $A_4\times \Z_2$-subgroup of $\map(S_5)$. In conclusion, $[(\D_a,(D_{\overline{G}_2}, \Pi_{\overline{G}_2}))]$ and $[(\D_a,(D_{\overline{G}_3}, \Pi_{\overline{G}_3}))]$ only form WLS-pair whereas $[(\D_a,(D_{\overline{G}_1}, \Pi_{\overline{G}_1}))]$ forms a weak-liftable pair that is not a WLS-pair.
\begin{figure}[ht]
\centering
\parbox{5cm}{

\labellist
\small
\pinlabel $(x_1,2)$ at 248 410
\pinlabel $\pi$ at 210 480
\pinlabel $\G_2$ at 120 480
\pinlabel $(x_2,2)$ at 248 40
\pinlabel $(x_3,5)$ at -65 250
\pinlabel $(x_4,5)$ at 415 250
\endlabellist
\centering
   \includegraphics[scale=.23]{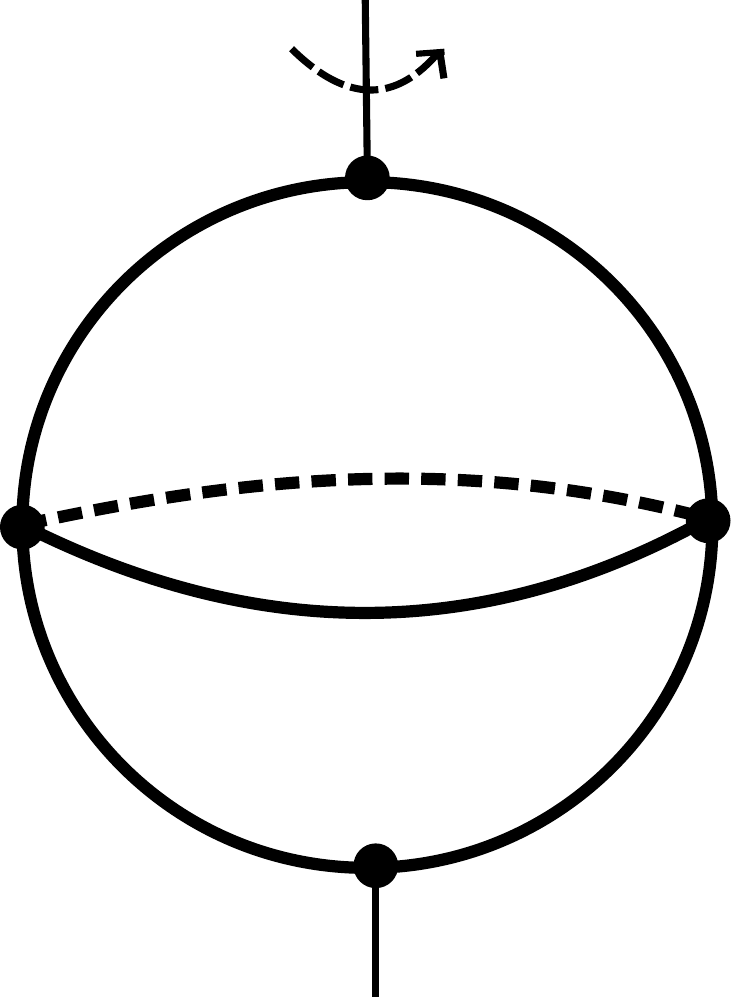}}
\qquad
\begin{minipage}{5cm}
\labellist
\small
\pinlabel $(y_1,2)$ at 242 280
\pinlabel $(y_2,2)$ at 178 152
\pinlabel $\pi$ at 210 480
\pinlabel $\G_3$ at 120 480
\pinlabel $(y_3,5)$ at -65 210
\pinlabel $(y_4,5)$ at 410 210
\endlabellist
\centering
   \includegraphics[scale=.23]{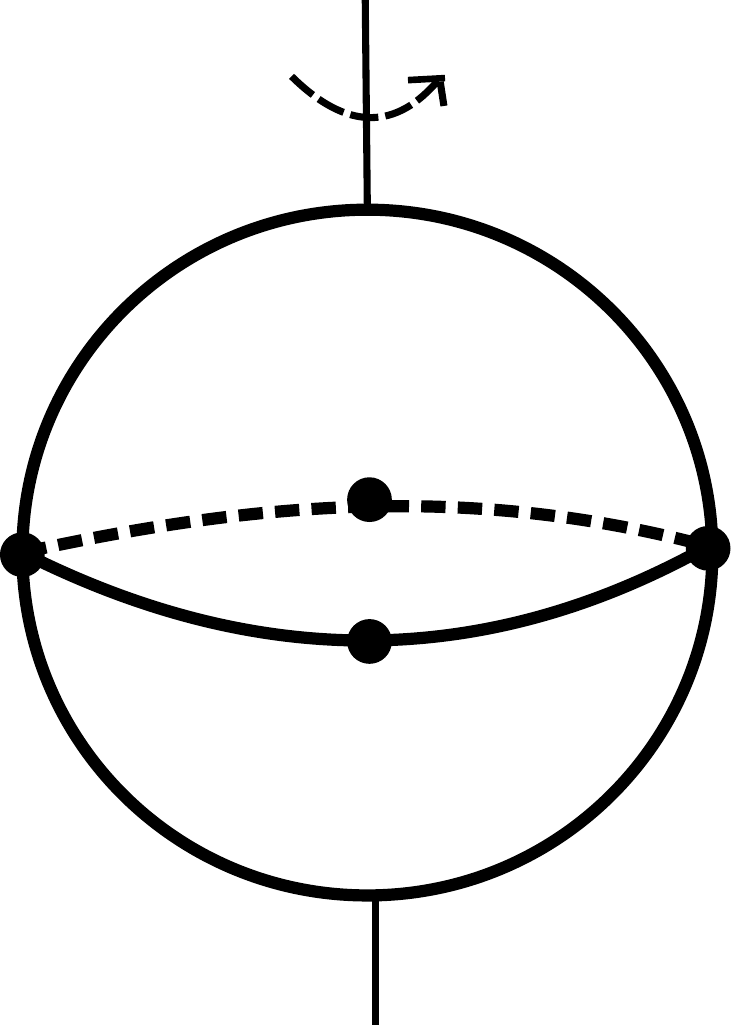}
\end{minipage}
\caption{The two $\Z_2$-actions on $\Orb_H$ induced by $D_{\overline{G}_2}$(left) and $D_{\overline{G}_3}$(right).}
\label{fig:fig4}
\end{figure}
\end{exmp}

\begin{exmp}[Dodecahedral action]
\label{exmp:lift_dodeca}
Consider the $A_5$-action on $S_{11}$ represented by $\D_a=(5,0;[(1 3)(2 4);2]^{[2]},[(3 5 4);3],[(3 4 5);3])$. By Proposition~\ref{prop:admissible_permutation}, any liftable involution can only induce the permutations $\Pi_{\overline{G}_1}=(1~2)$, $\Pi_{\overline{G}_2}=(3~4)$, and $\Pi_{\overline{G}_3}=(1~2)(3~4)$ in $\Sigma_4$. The quotient orbifolds for these permutations have signatures $(0;2,6,6)$, $(0;3,4,4)$, and $(0;2,2,2,3)$, respectively. By Definition~\ref{defn:alt_sym_data_set}, the signatures $(0;2,6,6)$ and $(0;3,4,4)$ give rise to unique symmetric data sets, up to equivalence, represented by
\begin{gather*}
\D_s = ((5,2,1),0;[((2~4~1~3~5),\bar{1});6,2],[((3 ~4~5),\bar{1});6,2],[((1~3)(2~4),\bar{0});2,1]) \\ \text{and} \\ \D_s' = ((5,2,1),0;[((2~4~3),\bar{1});4,2],[((1~3~5),\bar{1});4,2],[((3~4~5),\bar{0});3]),
\end{gather*}
respectively. Consequently,
$\Psi([\D_s])=[(\D_a,(D_{\overline{G}_1}, \Pi_{\overline{G}_1}))]$ and $\Psi([\D_s'])= [(\D_a,(D_{\overline{G}_2}, \Pi_{\overline{G}_2}))]$ where $D_{\overline{G}_1}=D_{\overline{G}_2}=(2,0;(1,2)^{[2]})$. The signature $(0;2,2,2,3)$ only corresponds to an $E_{4,2}^0$-data set, resulting an $A_4\times \Z_2$-subgroup of $\map(S_{11})$. Therefore, $[(\D_a,(D_{\overline{G}_3}, \Pi_{\overline{G}_3}))]$ forms a weak-liftable pair that is not a WLS-pair.
\end{exmp}

\subsection{Embeddable cyclic actions inside $\E$-action}
\label{subsec:embcyc}
In this section, we show that an irreducible periodic mapping class is not contained within an $\E$-subgroup of $\map(S_g)$. Additionally, we show that certain $\E$-subgroups of $\map(S_g)$ do not contain the hyperelliptic involution. Let $\ell(n)$ denote the largest order of an element in $\Sigma_n$. The following proposition provides an asymptotic bound for the order of an element in such a subgroup.

\begin{prop}\label{prop:bound}
Let $H\leq\map(S_g)$ be isomorphic to $\E$ with $g\geq2, m \geq 2,$ and $n\geq5$ but $n\neq 6$. Given $k\in \mathbb{N}$, for any $F\in H$, we have $|F|\leq g/k$ whenever $n \geq n_0$, where $n_0\in \mathbb{N}$ is the smallest integer satisfying $\frac{1}{4}\cdot\frac{n_0!}{\ell(n_0)} > 84k$. In particular, for any $F\in H$, we have $|F|\leq g$ whenever $n \geq 7$.
\end{prop}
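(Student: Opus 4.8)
The plan is to estimate $|F|$ from above and $g$ from below, and then to compare the two bounds. First I would bound the order of an arbitrary element of $H$. Since $H \cong \E$ sits in the extension $1 \to A_n \to \E \to \Z_m \to 1$, any $F=(\sigma,\bar x)\in H$ has $F^m$ lying in the $A_n$-part, because the $\Z_m$-coordinate of $F^m$ is $m\bar x=\bar 0$. Writing $F^m=(\tau,\bar 0)$ with $\tau\in A_n\subseteq\Sigma_n$, we get $|F|\mid m\,|\tau|$ and $|\tau|\le\ell(n)$, hence $|F|\le m\,\ell(n)$. This estimate is uniform in the flag $i$, so it handles the direct and the semidirect cases simultaneously without any case analysis of element orders inside $A_n\rtimes_\varphi\Z_m$.

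Next I would bound $g$ from below. As $H$ acts faithfully on $S_g$ with $g\ge 2$, the quotient $\Orb_H$ is a hyperbolic orbifold, and the Riemann--Hurwitz equation together with the fact that the smallest orbifold Euler characteristic magnitude of a hyperbolic quotient is $1/42$ (attained by the signature $(0;2,3,7)$) gives the Hurwitz bound $|H|\le 84(g-1)$. Since $|H|=|A_n|\,m=\tfrac{n!}{2}m$, this reads $g\ge 1+\tfrac{n!\,m}{168}>\tfrac{n!\,m}{168}$.

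Combining the two, suppose $\tfrac14\cdot\tfrac{n!}{\ell(n)}>84k$, equivalently $n!>336\,k\,\ell(n)$. Then
$$\frac{g}{k} > \frac{n!\,m}{168\,k} > \frac{336\,k\,\ell(n)\,m}{168\,k} = 2\,\ell(n)\,m \ge 2|F| \ge |F|,$$
so $|F|\le g/k$; in fact the estimates yield the stronger $|F|<g/(2k)$, reflecting the factor-of-two slack built into the threshold $84k$. For the final assertion I take $k=1$, so the requirement becomes $\tfrac{n!}{\ell(n)}>336$. Since $\ell(7)=12$ we have $\tfrac{7!}{\ell(7)}=420>336$, and to pass from $n=7$ to all $n\ge 7$ I would show that $\tfrac{n!}{\ell(n)}$ is non-decreasing: given an element of $\Sigma_{n+1}$ of maximal order $\ell(n+1)$, deleting one point from one of its cycles produces an element of $\Sigma_n$ whose order is at least $\ell(n+1)/(n+1)$ (if the deleted point was fixed the order is unchanged, and otherwise $\lcm(c,r)\le c\cdot\lcm(c-1,r)$ for a reduced $c$-cycle), whence $\ell(n+1)\le(n+1)\,\ell(n)$ and therefore $\tfrac{(n+1)!}{\ell(n+1)}\ge\tfrac{n!}{\ell(n)}$.

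The main obstacle I anticipate is the order bound in the semidirect case: one must resist analyzing element orders in $A_n\rtimes_\varphi\Z_m$ directly and instead exploit the clean observation that $F^m$ always returns to the characteristic subgroup $A_n$. A secondary subtlety is that the ``in particular'' clause asserts the bound for \emph{every} $n\ge 7$, so one cannot merely verify $n=7$ numerically but genuinely needs the monotonicity (or at least the unbounded growth) of $n!/\ell(n)$ sketched above; the rest of the argument is the routine arithmetic comparison of the two estimates.
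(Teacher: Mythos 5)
Your proposal is correct, and its core is the same as the paper's: bound the order of an element by a multiple of Landau's function $\ell(n)$, invoke the Hurwitz bound $|H|\leq 84(g-1)$, and compare. The differences are worth noting. First, your element-order bound $|F|\leq m\,\ell(n)$ (via the clean observation that $F^m$ lands in the characteristic $A_n$-factor) is sharper than the bound $|F|\leq 2m\,\ell(n)$ used in the paper, though both suffice for the stated threshold $\tfrac14\cdot\tfrac{n!}{\ell(n)}>84k$. Second, and more substantively, you handle the quantifier ``for all $n\geq n_0$'' differently: the paper argues by contradiction using the estimate $\ell(n)<e^{n/e}$ to conclude that $n!/\ell(n)\to\infty$, which by itself only shows that the inequality $\tfrac14\cdot\tfrac{n!}{\ell(n)}>84k$ can fail for at most finitely many $n$, not that it holds for every $n\geq n_0$ with $n_0$ defined as the \emph{smallest} integer past the threshold. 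Your monotonicity lemma $\ell(n+1)\leq(n+1)\,\ell(n)$ (proved by deleting a point from a maximal-order permutation), which gives that $n!/\ell(n)$ is non-decreasing, is exactly what is needed to make the ``smallest $n_0$'' formulation airtight, so your route is both more elementary (no appeal to the asymptotic $\ell(n)<e^{n/e}$) and closes a small gap in the paper's argument. The numerical verification $\tfrac{7!}{4\,\ell(7)}=\tfrac{5040}{48}=105>84$ for the case $k=1$ agrees with the paper.
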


\begin{proof}
It is known~\cite{Jean-Pierre} that $\ell(n) < e^{n/e}$. The sequence $f(n):=\frac{n!}{e^{n/e}}$ diverges to $+\infty$, and since $\frac{n!}{\ell(n)}>\frac{n!}{e^{n/e}}=f(n)$, the sequence $\Bigl\{\frac{n!}{\ell(n)}\Bigr\}$ also diverges to $+\infty$. Futhermore, any element $(\sigma,\bar{x}) \in \E$ has its order bounded by $\ell(n) \cdot 2m$.

Suppose, on the contrary, that for a fixed $k$, there exists a monotonically diverging sequence of natural numbers $\{n_j\}$ and corresponding elements $F_{n_j} \in H_{n_j}\leq \mathrm{Mod}(S_{g_{n_j}})$ such that $|F_{n_j}|>g_{n_j}/k$. Since $|H_{n_j}|\leq 84(g_{n_j}-1)$, we have: 
    $$\frac{1}{4}\cdot\frac{n_j!}{\ell(n_j)}=\frac{n_j!/2\cdot m}{\ell(n_j) \cdot 2m}\leq \frac{|H_{n_j}|}{|F_{n_j}|} < \frac{84(g_{n_j}-1)}{g_{n_j}/k} <84k,$$
which contradicts the fact that the subsequence $\Bigl\{\frac{n_j!}{\ell(n_j)}\Bigr\}$ of $\Bigl\{\frac{n!}{\ell(n)}\Bigr\}$ diverges to $+\infty$. Therefore the assertion holds.

Moreover, for $k=1$, the integer $n_0$ can be taken as $7$ since $\frac{1}{4}\cdot\frac{7!}{\ell(7)}=105>84\cdot1$. Therefore, for any $F\in H$, we have $|F|\leq g$ whenever $n \geq 7$.

\end{proof}

An \textit{irreducible periodic mapping class} is characterized by its corresponding quotient orbifold, which is a sphere with three cone points (see~\cite{gilman}). Using this characterization, the following corollary is derived as an application of the above proposition.

\begin{cor}
\label{prop:noirr}
For $n\geq7$, if $H\leq \mathrm{Mod}(S_g)$ is such that $H\cong \E$, then $H$ contains no irreducible mapping class.
\end{cor}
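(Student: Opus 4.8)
The plan is to combine the order bound from Proposition~\ref{prop:bound} with the orbifold characterization of irreducible periodic mapping classes. Recall that an irreducible periodic mapping class $F \in \map(S_g)$ is one whose quotient orbifold $S_g/\langle F\rangle$ is a sphere with exactly three cone points; this is the characterization cited from~\cite{gilman}. The key numerical fact I would invoke is a classical consequence of the Riemann-Hurwitz equation: if $\langle F \rangle$ acts on $S_g$ with quotient a sphere with three cone points of orders $(m_1,m_2,m_3)$, then the order $|F|$ must be comparable to $g$ from below. More precisely, writing $N = |F|$, the Riemann-Hurwitz relation
\[
\frac{2-2g}{N} = 2 - 0 - \sum_{j=1}^{3}\left(1-\frac{1}{m_j}\right) = -1 + \sum_{j=1}^{3}\frac{1}{m_j}
\]
forces $2g-2 = N\bigl(1 - \sum_{j} 1/m_j\bigr)$, and since $\sum_j 1/m_j \le 1/2 + 1/3 + 1/7 = 41/42$ for any valid signature (the maximal value being the $(2,3,7)$ case), we obtain $2g-2 \ge N/42$, i.e.\ $|F| = N \le 84(g-1) < 84g$, but more usefully a \emph{lower} bound $|F| \ge$ (something growing with $g$). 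Let me reconsider the direction: the standard fact is that an irreducible (three-cone-point) periodic map satisfies $|F| > g$, since the smallest value of $1 - \sum 1/m_j$ over admissible triples with $g \ge 2$ yields $N > g$.

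With that in hand, the argument is a direct contradiction. Suppose $H \cong \E$ contains an irreducible periodic mapping class $F$. On the one hand, irreducibility forces $|F| > g$ via the three-cone-point Riemann-Hurwitz computation above. On the other hand, Proposition~\ref{prop:bound} (applied with $k=1$, whose hypothesis $\tfrac14 \cdot \tfrac{7!}{\ell(7)} = 105 > 84$ is verified there) guarantees that for $n \ge 7$ every element $F \in H$ satisfies $|F| \le g$. These two inequalities are incompatible, so no such $F$ can exist. Thus $H$ contains no irreducible periodic mapping class, which is exactly the assertion.

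The main step requiring care is establishing the strict lower bound $|F| > g$ for an irreducible periodic mapping class, since this is what clashes with Proposition~\ref{prop:bound}. I would verify it by minimizing $1 - \sum_{j=1}^{3} 1/m_j$ over all signatures $(0; m_1, m_2, m_3)$ that arise from a genuine cyclic action of order $N$ on a surface of genus $g \ge 2$; the relevant constraint (from the data-set conditions, e.g.\ $\lcm$ and the congruence in Definition~\ref{defn:data_set}) keeps the quantity $1 - \sum_j 1/m_j$ bounded below by a value of order $1/N$ that nonetheless yields $N > g$ after rearranging Riemann-Hurwitz. The only genuine obstacle is bookkeeping: confirming that the three-cone-point constraint, together with $g \ge 2$, indeed pushes $|F|$ strictly above $g$ rather than merely up to $g$; once that inequality is secured, the contradiction with the $|F| \le g$ bound of Proposition~\ref{prop:bound} for $n \ge 7$ is immediate and the corollary follows at once.
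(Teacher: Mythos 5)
Your proposal follows essentially the same route as the paper: characterize an irreducible periodic class by its three-cone-point spherical quotient, extract a lower bound on $|F|$ from Riemann--Hurwitz, and contradict the upper bound $|F|\leq g$ from Proposition~\ref{prop:bound} with $k=1$. The one step you leave open --- the strict lower bound --- is where your sketch wobbles: you speak of minimizing $1-\sum_j 1/m_j$ over admissible triples, but minimizing that quantity \emph{maximizes} $|F|$ (it is how one gets the Hurwitz-type upper bound $84(g-1)$, which you also computed); to bound $|F|$ from \emph{below} you need $\sum_j 1/m_j$ bounded from below. The paper does this in one line: each cone order $n_j$ divides (hence is at most) $|F|$, so $\sum_{j=1}^{3}1/n_j \geq 3/|F|$, and Riemann--Hurwitz then gives $1 \geq (2g+1)/|F|$, i.e.\ $|F|\geq 2g+1 > g$. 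With that substitution your argument closes correctly and coincides with the paper's proof.
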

\begin{proof}
Let $F \in \mathrm{Mod}(S_g)$ be an irreducible periodic mapping class, and let $\sig(\Orb_F)=(0;n_1,n_2,n_3)$. From the Riemann-Hurwitz equation, we have:
$$\begin{array}{lrl}
& \frac{2-2g}{|F|} &=~2-(1-\frac{1}{n_1})-(1-\frac{1}{n_2})-(1-\frac{1}{n_3}).\\
\Rightarrow & \frac{2-2g}{|F|} &=~-1+(\frac{1}{n_1}+\frac{1}{n_2}+\frac{1}{n_3})\geq~-1+\frac{3}{|F|}.\\
\Rightarrow & 1 &\geq~\frac{2g-2}{|F|}+\frac{3}{|F|}=\frac{2g+1}{|F|}.\\
\Rightarrow & |F| &\geq~2g+1.
\end{array}$$
Thus, $|F|> 2g$. But, by Proposition~\ref{prop:bound}, any mapping class in $H\cong \E$ has an order less than $g$ for $n \geq 7$. Hence $F\notin H$ for $n \geq 7$.
\end{proof}

\begin{prop}
	Let $H\leq \mathrm{Mod}(S_g)$ be such that $H\cong \E$, where $n\geq 10$, $m$ is odd if $i = 0$, and $\frac{m}{2}$ is odd if $i = 1$. Then $H$ does not contain a hyperelliptic involution.
\end{prop}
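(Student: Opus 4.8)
The plan is to show that a hyperelliptic involution cannot arise as an element of an $\E$-subgroup under the stated hypotheses, by combining the fixed-point data of a hyperelliptic involution with the structure of involutions in $\E$ developed earlier. Recall that a hyperelliptic involution $\iota \in \map(S_g)$ is characterized by its quotient orbifold being a sphere with exactly $2g+2$ cone points of order $2$, equivalently by its cyclic data set being $D_\iota = (2,0;(1,2)^{[2g+2]})$. So I would first suppose, for contradiction, that some $F = (\sigma,\bar{x}) \in H \cong \E$ is a hyperelliptic involution, and then extract strong constraints on $(\sigma,\bar{x})$ forced by the requirement $|F| = 2$ together with the enormous fixed-point count $2g+2$.

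First I would classify the involutions of $\E$. Under the hypotheses ($m$ odd when $i=0$, and $m/2$ odd when $i=1$), the projection $\E \to \Z_m$ sends an involution $(\sigma,\bar x)$ to an element $\bar x$ with $2\bar x = \bar 0$; since $\Z_m$ has a unique element of order dividing $2$ being trivial when $m$ is odd and being $\overline{m/2}$ when $m$ is even. In the direct-product case $i=0$ (so $m$ odd), $\bar x = \bar 0$ is forced, hence $F = (\sigma,\bar 0)$ with $\sigma \in A_n$ an involution, i.e.\ $\sigma$ is a product of an even number of disjoint transpositions. In the semidirect case $i=1$ with $m/2$ odd, I would check using the conjugation-by-$(1\,2)$ coupling $\varphi$ and the squaring relation that an involution must again have $A_n$-component lying in a fixed-point-rich conjugacy class; the key point is that $(\sigma,\bar x)^2 = (\sigma\cdot\varphi(\bar x)(\sigma),\,2\bar x)$, and demanding this equal the identity pins down both $2\bar x=\bar 0$ and a precise relation on $\sigma$, so that $F$ again reduces (up to the weak-conjugacy equivalence of Definition~\ref{defn:eq_data_sets}) to an involution essentially supported in the $A_n$-factor.

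Next I would invoke Proposition~\ref{prop:cyclicgen} to compute the cyclic data set $\D_e[F] = \D_e[(\sigma,\bar x)]$ of the involution inside the ambient $\E$-action, and compare it against the rigid shape $(2,0;(1,2)^{[2g+2]})$ of a hyperelliptic involution. The fixed-point counts are governed by Lemma~\ref{lem:subaction}, where each fixed point contributes a factor weighted by $1/m_j$ and by $|C_H(F)|$. The crucial numerical obstruction is that $|C_H(F)|$ is large: since $F$ is an involution of $A_n$ (fixing at least $n-4$ letters), its centralizer in $A_n$, and hence in $\E$, has order comparable to $(n-2)!$ or larger, which for $n \geq 10$ grossly exceeds what the sphere-with-only-order-two-cone-points signature of a hyperelliptic involution can accommodate. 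Equivalently, a hyperelliptic involution is central in $\map(S_g)$ and its only fixed-point rotation number is $1$ modulo $2$, forcing $\tilde g_0 = 0$ in $\D_e[F]$; I would show this genus-zero, all-order-two constraint is incompatible with the Riemann--Hurwitz bookkeeping of the $\E$-action once $n \geq 10$.

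The main obstacle I anticipate is the semidirect case $i=1$: there the involution need not live inside $A_n \times \{\bar 0\}$, and the coupling twists conjugacy classes via $(1\,2)$, so I must carefully track which weak-conjugacy class $F$ falls into and verify the hypothesis $m/2$ odd is exactly what prevents an involution of the form $(\sigma, \overline{m/2})$ with $\sigma$ having too few fixed points. Concretely, I expect to split into the parity subcases of the final remark of Section~\ref{sec:prelims} and rule each out by showing the resulting $A_n$-component always has cycle type fixing at least $n-4$ points, whence the centralizer-order argument applies uniformly. Once the involution is confined to such a fixed-point-rich class, the contradiction with the $2g+2$ simple branch points of a hyperelliptic involution follows from a direct Riemann--Hurwitz count, completing the proof.
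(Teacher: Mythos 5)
Your overall strategy --- reduce to the fixed-point count of the involution via Lemma~\ref{lem:subaction} and compare it with the $2g+2$ cone points of $D_\iota=(2,0;(1,2)^{[2g+2]})$ --- is the same as the paper's, and your reading of where the parity hypotheses on $m$ enter (ruling out a ``central'' involution such as $(id,\overline{m/2})$, so that every involution of $\E$ has centralizer of order at most $(n-2)!\,m$ rather than $\tfrac{n!}{2}m$) is on target. But the decisive inequality is stated in the wrong direction. You assert that the centralizer is ``comparable to $(n-2)!$ or larger'' and that this ``grossly exceeds'' what the hyperelliptic signature can accommodate, i.e.\ you appear to be aiming to show the fixed-point count exceeds $2g+2$. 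In fact the opposite happens: Lemma~\ref{lem:subaction} gives $Y=|C_{\E}(\iota)|\cdot\sum_{j}\tfrac{1}{m_j}\leq (n-2)!\,m\cdot\tfrac{r}{2}$, while the Riemann--Hurwitz equation for the $\E$-action (after reducing to quotient genus $g_0=0$) gives $2g+2=4+\tfrac{n!}{2}m\bigl(r-2-\sum_j \tfrac{1}{m_j}\bigr)$, which is larger by a factor on the order of $n(n-1)$. The contradiction is that the involution has too \emph{few} fixed points, not too many, and making this precise requires the case analysis $r\geq 5$, $r=4$, $r=3$ with explicit bounds on $\sum_j 1/m_j$ (namely $\tfrac{r}{2}$, $\tfrac{11}{6}$, $\tfrac{41}{42}$); the hypothesis $n\geq 10$ is used exactly in the $r=3$ case, where one needs $n(n-1)\geq 84$. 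As written, your plan would have you attempting to prove a false inequality, and your closing appeal to ``a direct Riemann--Hurwitz count'' hides precisely the step where all the work lies.

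Two subsidiary claims also feed the misdirection. First, an involution of $A_n$ is a product of an even number ($\geq 2$) of disjoint transpositions, so it fixes \emph{at most} $n-4$ letters (not at least $n-4$), and its centralizer in $A_n$ has order at most $8(n-4)!<(n-2)!$ rather than at least $(n-2)!$; the bound $|C_{\E}(\iota)|\leq (n-2)!\,m$ is an \emph{upper} bound, attained essentially only by the element $(id,\overline{m/2})$ in the semidirect case. Second, a hyperelliptic involution is central in $\map(S_g)$ only for $g=2$; what is true in general is that it is central in the automorphism group of a hyperelliptic Riemann surface --- a fact that would actually yield a much shorter proof (the center of $\E$ has odd order under your hypotheses, hence contains no involution), but it is not the fact you invoke, and it is not the route the paper takes.
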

\begin{proof}
	Suppose the weak conjugacy class of $H$ is represented by a data set $\D_e$ as in Definition~\ref{defn:H_data_set}. Since a hyperelliptic involution has a quotient orbifold of genus zero, it is evident that if the orbifold $S_g/H$ has a positive genus, no hyperelliptic involution can exist inside $H$. Therefore, we assume $g_0=0$. By the Riemann-Hurwitz equation, we have
	$$2+2g=4+\frac{n!}{2}m \biggl(r-2-\sum_{j=1}^{r}\frac{1}{m_j}\biggr).$$
	Let $X$ represents the expression on the right-hand side of the preceding equation. Denoting the hyperelliptic involution by $\sigma\in \mathrm{Mod}(S_g)$, we have $D_{\sigma}=\left(2,0; (1,2)^{[2g+2]}\right)$. Now, assume $$Y:=|\mathbb{F}_{\sigma}(1,2)|=|C_{\E}(\sigma)| \cdot \sum\limits_{\substack{1\leq j\leq r, ~2\mid m_j \\ \sigma\sim_{\E} \sigma_j^{m_j/2}}}\frac{1}{m_j},$$ in view of Proposition~\ref{prop:cyclicgen}. To rule out the possibility of $g_0=0$, it suffices to establish that $X>Y$ for $n\geq 10$, since both expressions compute the number of fixed points of $\sigma$. We break the argument into the following three cases.
	
	\noindent\textit{Case (i): $r\geq 5$.} In this case, we have:
	\begin{align*}
	X 
	&= 4+\frac{n!}{2}m \biggl(r-2-\sum_{j=1}^{r}\frac{1}{m_j}\biggr) &\\
	&\geq \frac{n!}{2}m \biggl(\frac{r}{2} -2\biggr) \quad \biggl( \text{as } \frac{1}{m_j} \leq \frac{1}{2} \text{ for all } 1 \leq j \leq r \biggr) & \\
	&\geq \left(n-2\right)!m \frac{r}{2} \quad \biggl( \text{as } n \left( n-1 \right) \geq 10 \geq 2 \left( 1 + \frac{4}{r-4} \right) \text{ for } n \geq 4, r \geq 5 \biggr)& \\
	&\geq |C_{\E}(\sigma)| \cdot \sum\limits_{\substack{1\leq j\leq r, ~2\mid m_j \\ \sigma\sim_{\E} \sigma_j^{m_j/2}}}\frac{1}{m_j} \quad \biggl( \text{as } \frac{1}{m_j} \leq \frac{1}{2} \text{ for all } 1 \leq j \leq r \text{ and } |C_{\E}(\sigma)| \leq (n-2)!m \biggr).
	\end{align*}
	
	\noindent\textit{Case (ii): $r=4$.} In this case, we have:
	\begin{align*}
	X 
	&= 4+\frac{n!}{2}m \biggl(4-2-\sum_{j=1}^{4}\frac{1}{m_j}\biggr) &\\
	&\geq \frac{n!}{12}m \quad \biggl( \text{as } \sum_{j=1}^{4}\frac{1}{m_j} \leq \frac{1}{2} +\frac{1}{2}+\frac{1}{2}+\frac{1}{3} = \frac{11}{6} \biggr) & \\
	&\geq 2\left(n-2\right)!m \quad \biggl( \text{as } n \left( n-1 \right) \geq 24 \text{ for } n \geq 6 \biggr)& \\
	&\geq |C_{\E}(\sigma)| \cdot \sum\limits_{\substack{1\leq j\leq r, ~2\mid m_j \\ \sigma\sim_{\E} \sigma_j^{m_j/2}}}\frac{1}{m_j} \quad \biggl( \text{since } \sum_{j=1}^{4}\frac{1}{m_j} \leq \frac{11}{6}<2 \text{ and } |C_{\E}(\sigma)| \leq (n-2)!m \biggr).
	\end{align*}
	
	\noindent\textit{Case (iii): $r=3$.} In this case, we have:
	\begin{align*}
		X 
		&= 4+\frac{n!}{2}m \biggl(3-2-\sum_{j=1}^{3}\frac{1}{m_j}\biggr) &\\
		&\geq \frac{n!}{84}m \quad \biggl( \text{as } \sum_{j=1}^{3}\frac{1}{m_j} \leq \frac{1}{2} +\frac{1}{3}+\frac{1}{7} = \frac{41}{42} \biggr) & \\
		&\geq \left(n-2\right)!m \quad \biggl( \text{as } n \left( n-1 \right) \geq 84 \text{ for } n \geq 10 \biggr)& \\
		&\geq |C_{\E}(\sigma)| \cdot \sum\limits_{\substack{1\leq j\leq r, ~2\mid m_j \\ \sigma\sim_{\E} \sigma_j^{m_j/2}}}\frac{1}{m_j} \quad \biggl( \text{as } \sum_{j=1}^{3}\frac{1}{m_j} \leq \frac{41}{42}<1 \text{ and } |C_{\E}(\sigma)| \leq (n-2)!m \biggr).
	\end{align*}
Hence, our assertion follows.
\end{proof}

\subsection{Bound on cyclic action under alternating covers}
\label{subsec:boundcyc}
Let $E_{n,m}^i$ acts on $S_g$ with quotient orbifold $S_{g_0, \ell}$. In this subsection, we will see the bound on $m$ for some condition on order of $E_{n,m}^i$.

\begin{theorem}
	Let $H < \map(S_g)$ such that $H \cong E_{n,m}^i$.  If $|H| > 5g-5$,  then $m \leq 26$.
\end{theorem}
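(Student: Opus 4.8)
The plan is to convert the hypothesis $|H| > 5g-5$ into a bound on the orbifold Euler characteristic of $\Orb_H$ via Riemann--Hurwitz, and then to bound $m$ combinatorially using the cyclic quotient data set supplied by Proposition~\ref{prop:wls}. Write $\sig(\Orb_H)=(g_0;m_1,\dots,m_r)$ and set $\mu:=2g_0-2+\sum_{j=1}^r(1-\tfrac1{m_j})$. Since $|H|=n!\,m/2$ and Theorem~\ref{thm:riemann}(iii) gives $2g-2=|H|\mu$, the hypothesis is equivalent to $\mu=2(g-1)/|H|<2/5$. As $g\geq 2$ forces $\mu>0$ while each $1-\tfrac1{m_j}\geq \tfrac12$, this pins the signature down: $g_0\geq 2$ gives $\mu\geq 2$, and $g_0=1$ gives $\mu\geq\tfrac12$ (unless $r=0$, which forces $g=1$), so $g_0=0$; then $r\geq 5$ gives $\mu\geq\tfrac12$ and $r\leq 2$ gives $\mu\leq 0$, so $r\in\{3,4\}$. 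Equivalently, $\sum 1/m_j\in(\tfrac35,1)$ when $r=3$ and $\sum 1/m_j\in(\tfrac85,2)$ when $r=4$.

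Next I would pass to the quotient $\Z_m$-action. By Proposition~\ref{prop:wls} its cyclic data set is $D_{\overline{G}}=(m,0;(c_1,t_1),\dots,(c_r,t_r))$, where the period $t_j=|\bar{x}_j|$ divides both $m$ and the cone order $m_j$. Being a cyclic data set of degree $m$ with orbifold genus $0$, condition (ii) of Definition~\ref{defn:data_set} forces $\lcm(t_1,\dots,\widehat{t_j},\dots,t_r)=m$ for every $j$; equivalently, every prime power $p^a\| m$ divides at least two of the $t_1,\dots,t_r$. Crucially, this argument only uses $t_j\mid m_j$, $t_j\mid m$, and $m_j\geq t_j$, which hold for both $i=0$ and $i=1$, so the flag plays no role.

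For $r=4$ the constraint $\sum 1/m_j>\tfrac85$ forces all $m_j$ small, and the ``appears twice'' condition collapses the surviving signatures to essentially $(2,2,3,3)$, giving $m\leq 6$. The decisive case is $r=3$. Here $\sum 1/m_j>\tfrac35$ forces the smallest order, say $m_1$, to satisfy $m_1\leq 4$. Setting $D:=\prod_{p^a\|m,\,p^a\nmid t_1}p^a$, the twice-condition yields $D\mid t_2$ and $D\mid t_3$, while $m/D\mid t_1\mid m_1$; hence $t_2,t_3\geq D\geq m/m_1$ and therefore $m_2,m_3\geq m/m_1$. Feeding this into $\tfrac35<\tfrac1{m_1}+\tfrac1{m_2}+\tfrac1{m_3}\leq \tfrac1{m_1}+\tfrac{2m_1}{m}$ already bounds $m$, and a short case check on $m_1\in\{2,3,4\}$, refined by the possible values of $m/D\mid m_1$, yields $m\leq 26$. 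The extremal configuration is $(m_1,m_2,m_3)=(2,m/2,m)$ with $m\equiv 2\pmod 4$, realized at $(2,13,26)$ where $\sum 1/m_j=\tfrac{8}{13}>\tfrac35$; the next candidate $(2,15,30)$ gives exactly $\sum 1/m_j=\tfrac35$, i.e. $|H|=5(g-1)$, and is excluded by the strict inequality, which is precisely why the bound lands at $26$ rather than $30$.

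The hard part is the $r=3$ case: the individual cone orders $m_j$ are a priori unbounded, so $|H|>5(g-1)$ on its own does not bound $m$. The essential input is that $D_{\overline{G}}$ is itself a genus-$0$ cyclic data set, whose $\lcm$-condition forces at least two periods to be divisible by $m/m_1$; this is exactly what converts the sum constraint on $\sum 1/m_j$ into a bound on $m$, and tracking the arithmetic of $D$ carefully is what distinguishes the sharp constant $26$ from the cruder estimates ($m<40$) that a naive $\sqrt{m}$ argument produces.
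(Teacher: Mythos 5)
Your proof is correct and follows essentially the same route as the paper: Riemann--Hurwitz pins the quotient signature down to genus $0$ with $3$ or $4$ cone points, and the bound on $m$ then comes from the induced $\Z_m$-action on $\Orb_{A_n}$, i.e.\ from the conditions that the genus-$0$ cyclic data set $D_{\overline{G}}$ of Proposition~\ref{prop:wls} must satisfy. The only difference is presentational: where the paper enumerates the admissible signatures and asserts that the possible values of $m$ ``can be verified'' case by case, your divisor argument ($m/D \mid t_1 \mid m_1$ forcing $m_2,m_3 \geq m/m_1$, refined by the $\lcm$ condition when $m\equiv 2 \pmod 4$) carries out that verification uniformly, including for the infinite signature families such as $(0;2,j,k)$.
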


\begin{proof}
	Let $H \cong A_n \rtimes_{\phi} \langle G  \rangle$. Let $H$ acts on $S_g$ with a quotient orbifold $\Orb_H$ of genus $g_0$ with $\ell$ cone points. The $A_n$-action on $S_g$ induces a $\langle \bar{G} \rangle$-action on $\Orb_{A_n}$ with $|\bar{G}|=m$.  By the Riemann-Hurwitz equation (Theorem~\ref{thm:riemann}(iii)) and our assumption that $|H|>5g-5$, it follows that $g_0 = 0$ and $\ell \in \{3,4\}$.  The following cases arise. 
	
\begin{enumerate}[{Case} 1:]

\item $g_0 = 0$, $\ell = 4$ and $|H|> 5g-5$.  In this case, the possible signature for $\Orb_H$ are:
	$(0;2,2,2,k)$ for $3 \leq k \leq 9$ and $(0;2,2,3,3)$.  Furthermore, for these signatures, it can be verified from the existence of induced $\langle \bar{G} \rangle$- action and Theorem~\ref{thm:cyc_conj_class}, that the possible values of $m$ are $2,3,$ and $6$.

\item $g_0 = 0, \ell = 3$ and $|H|> 5g-5$.  In this case,  we have the following possible signatures for $\Orb_h$:
\begin{enumerate}[(a)]
\item $(0;2,3,k)$ for $k \geq 7$, 
\item $(0;2,4,k)$ for $k \geq 5$, 
\item $(0;2,j,k)$ for $5 \leq j \leq 19$, $j \leq k < \frac{10j}{j-10}$ $\left(\text{if } \frac{10j}{j-10}>0 \right)$, 
\item $(0;3,3,k)$ for $k \geq 4$, 
\item $(0;3,j,k)$ for $4 \leq j \leq 7, j \leq k < \frac{15j}{4j-15}$, 
\item $(0;4,4,k)$ for $4 \leq k < 10$,  and
\item $(0;4,5,k)$ for $5 \leq k < 7$. 
\end{enumerate}
For each of the above signatures, it can be shown from the existence of induced $\langle \bar{G} \rangle$- action and Theorem~\ref{thm:cyc_conj_class}, that $m \in \{2,3\ldots,18\} \cup\{19, 22,26\}$.	
\end{enumerate}
\end{proof}

\subsection{Lifting infinite order mapping classes under alternating covers}
\label{subsec:liftinf}
So far, we have dealt with the liftability of periodic mapping classes under alternating covers. In this section, we explore the liftability of some infinite order maps (e.g. Dehn twists) under alternating covers. This leads to the analysis of the subgroups of $\map(S_g)$ that are extensions of the infinite cyclic group by alternating groups, i.e., the groups, which we denote by $E_{n,\infty}$, fitting into the short exact sequence:
 \[1 \to A_n \to E_{n,\infty} \to \Z \to 1. \tag{$\star$}\]
It also follows from the theory of group extensions \cite[Chapter 11]{robinson} that any coupling $\Phi: \Z \to \out(A_n)$ is realized by some extensions of $\Z$ by $A_n$. Moreover, there is a unique equivalence class of extensions of $\Z$ by $A_n$ with a coupling $\Phi$. Hence, one can conclude that the short exact sequence ($\star$) always splits, and we have the following.
\begin{prop}\label{prop:inf_extension}For the short exact sequence: $1\to A_n \to E_{n,\infty} \to \Z \to 1$, exactly one of the following holds.
\begin{enumerate}[(i)]
\item  $E_{n,\infty}$ is isomorphic to $A_n \times \Z$.
\item  $E_{n,\infty}$ is isomorphic to $A_n \rtimes_{\phi} \Z$, where $\varphi:\Z \to \aut(A_n)$ is a homomorphism determined by $\varphi(1)=\chi_{(1~2)} \in \aut(A_n)$, defined as $\chi_{(1~2)}(\sigma)=(1~2) \sigma (1~2)$.
\end{enumerate}
\end{prop}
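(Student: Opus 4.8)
The plan is to reduce the classification of the extensions $E_{n,\infty}$ to a count of couplings $\Phi \colon \Z \to \out(A_n)$, exploiting that $Z(A_n) = \{\mathrm{id}\}$. Under the standing hypothesis $n \geq 5$, $n \neq 6$, the extension theory already invoked \cite[Chapter 11]{robinson} tells us that triviality of the centre forces every coupling to be realised by a \emph{unique} equivalence class of extensions, so it suffices to enumerate $\mathrm{Hom}(\Z, \out(A_n))$ and then to exhibit a concrete split extension realising each coupling. First I would record that $\aut(A_n) \cong \Sigma_n$ and $\mathrm{Inn}(A_n) \cong A_n/Z(A_n) \cong A_n$, whence $\out(A_n) \cong \Sigma_n/A_n \cong \Z_2$, the non-trivial class being represented by $\chi_{(1~2)}$, since conjugation by the odd permutation $(1~2)$ cannot lie in $\mathrm{Inn}(A_n)$. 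As a homomorphism $\Z \to \Z_2$ is determined by the image of the generator, there are exactly two couplings: the trivial one and the surjection $1 \mapsto [\chi_{(1~2)}]$.

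Next I would match each coupling with one of the two listed groups. The direct product $A_n \times \Z$ is a split extension on which $\Z$ acts trivially, so its coupling is trivial; the semidirect product $A_n \rtimes_\varphi \Z$ with $\varphi(1) = \chi_{(1~2)}$ is a split extension whose coupling is $1 \mapsto [\chi_{(1~2)}]$, the non-trivial element of $\out(A_n)$. Because each coupling determines a unique equivalence class of extensions, every $E_{n,\infty}$ is equivalent, and hence isomorphic, to exactly one of these two groups, which establishes that at least one of (i), (ii) holds.

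It remains to check that the two options are mutually exclusive, i.e. that $A_n \times \Z \not\cong A_n \rtimes_\varphi \Z$; this is the step I expect to require the most care, since inequivalence of extensions does not by itself yield non-isomorphism of the underlying groups. I would produce a genuine isomorphism invariant. The cleanest concrete route is to compute centres: a short direct check gives $Z(A_n \times \Z) = \{\mathrm{id}\} \times \Z$ with central quotient $A_n$, whereas $Z(A_n \rtimes_\varphi \Z) = \{\mathrm{id}\} \times 2\Z$ with central quotient $A_n \rtimes_{\chi_{(1~2)}} \Z_2 \cong \Sigma_n$; since $A_n \not\cong \Sigma_n$, the two groups are non-isomorphic. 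Alternatively, one can argue structurally: the quotient $E_{n,\infty}/A_n \cong \Z$ is torsion-free, so the torsion elements form exactly $A_n$, making $A_n$ characteristic, and then any isomorphism restricts to $A_n$ and descends to $\aut(\Z) = \{\pm 1\}$; because $\out(A_n) \cong \Z_2$ is abelian and negation fixes both homomorphisms $\Z \to \Z_2$, the coupling is preserved, so its (non)triviality is an isomorphism invariant. Either invariant separates the two cases and completes the proof of the ``exactly one'' assertion.
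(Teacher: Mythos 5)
Your proposal is correct and follows essentially the same route as the paper: both reduce the classification to couplings $\Phi\colon\Z\to\out(A_n)\cong\Z_2$ via the triviality of $Z(A_n)$ and the uniqueness of the extension class attached to each coupling. The only addition is your explicit verification of mutual exclusivity (via centres, or via the characteristic torsion subgroup), a point the paper leaves implicit; this check is sound and correctly carried out.
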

A \textit{multicurve} in $S_g$ is the union of a finite collection of disjoint non-isotopic
essential (not homotopic to a point) simple closed curves in $S_g$. By a \textit{multitwist associated with a multicurve $\mathcal{C}$} in $\map(S_g)$, we mean a finite product of Dehn
twists about curves in $\mathcal{C}$. To construct $E_{n,\infty}$ subgroups in $\map(S_g)$, we restrict ourselves, in this section, to the case where the $\Z$-factor corresponds to a multitwist.

\begin{lem}\label{lem:scc}
If the group $A_n \rtimes \Z$ embeds in $\map(S_g)$, then the $\Z$-factor of the embedding is not generated by a power of a Dehn twist.
\end{lem}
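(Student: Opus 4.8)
The plan is to argue by contradiction. Suppose $E \cong A_n \rtimes \Z$ embeds in $\map(S_g)$ with the $\Z$-factor generated by $t = T_c^p$ for some essential simple closed curve $c$ and some $p \neq 0$. The only structural feature I will use is that $A_n$ is the kernel of the projection $E \to \Z$, hence normal; consequently, for every $\sigma \in A_n$ the element $\sigma t \sigma^{-1} t^{-1}$ lies in $A_n$ and is therefore of finite order. Since conjugation of a Dehn twist obeys $\sigma T_c \sigma^{-1} = T_{\sigma(c)}$, this element equals $T_{\sigma(c)}^p\,T_c^{-p}$. Thus the first step records that, for every $\sigma \in A_n$,
\[
T_{\sigma(c)}^p\,T_c^{-p} \in A_n \quad\text{is of finite order.}
\]

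Next I would show that $T_{c'}^p\,T_c^{-p}$ is of \emph{infinite} order whenever $c' := \sigma(c) \neq c$. If the geometric intersection number $i(c,c') = 0$, then $c$ and $c'$ are disjoint and non-isotopic, so $T_{c'}^p\,T_c^{-p}$ is a nontrivial multitwist and hence of infinite order. If $i(c,c') > 0$, let $N$ be the subsurface filled by $c \cup c'$; then $T_{c'}^p\,T_c^{-p}$ is supported on $N$, and Thurston's construction furnishes a homomorphism $\langle T_c, T_{c'}\rangle \to \mathrm{PSL}(2,\mathbb{R})$ sending $T_c$ and $T_{c'}$ to parabolics, under which $T_{c'}^p\,T_c^{-p}$ has trace $p^2\lambda + 2$ for some $\lambda > 0$. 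Being hyperbolic, its image has infinite order, so $T_{c'}^p\,T_c^{-p}$ is pseudo-Anosov on $N$, and in particular of infinite order in $\map(S_g)$. Comparing with the finite-order conclusion above forces $\sigma(c) = c$ for every $\sigma \in A_n$; that is, $A_n$ fixes the isotopy class of $c$.

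The final step converts this rigidity into a contradiction. Realize $A_n$ by orientation-preserving isometries for some hyperbolic metric on $S_g$ via Nielsen realization. Since $A_n$ preserves the free homotopy class of $c$ and the geodesic representative is canonical, $A_n$ preserves this geodesic setwise, giving a homomorphism $r : A_n \to \mathrm{Isom}(c) \cong O(2)$. Because $A_n$ is simple and nonabelian for $n \geq 5$, the normal subgroup $\ker r$ is trivial or all of $A_n$. If $\ker r = A_n$, then $A_n$ fixes $c$ pointwise; an orientation-preserving isometry fixing the geodesic $c$ pointwise acts as $+1$ on the normal bundle and hence is the identity, contradicting faithfulness. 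If $\ker r$ is trivial, then $A_n$ embeds in $O(2)$, which is impossible since the finite subgroups of $O(2)$ are cyclic or dihedral. Either alternative is absurd, so the $\Z$-factor cannot be a power of a Dehn twist.

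I expect the main obstacle to be the intersecting case $i(c,c') > 0$ of the middle step: one must check that the mixed twist $T_{c'}^p\,T_c^{-p}$ is genuinely pseudo-Anosov (its Thurston trace strictly exceeds $2$) rather than reducible or periodic, and that infinite order on the filling subsurface $N$ persists in $\map(S_g)$. By contrast, the disjoint case and the concluding $O(2)$-obstruction are routine.
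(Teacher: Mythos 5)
Your proof is correct, but it takes a genuinely different route from the paper's. The paper first handles the direct product: commutativity of $T_c^{\alpha}$ with each $F$ in the $A_n$-factor gives $T_c^{\alpha}=T_{F(c)}^{\alpha}$ outright, hence $F(c)=c$ by injectivity of $a\mapsto T_a^{\alpha}$; the semidirect case is then reduced to this one by passing to the index-two subgroup $A_n\times\langle T_c^{2\alpha}\rangle$. With the invariant curve in hand, the paper cuts along a realization of $c$, caps with marked disks, notes that the induced copy of $A_n$ lies in the pure mapping class group of the cut surface (its generators have odd order, so cannot swap the marked points), and invokes the cited fact that finite subgroups of pure mapping class groups of marked surfaces are cyclic. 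You instead treat both product types at once through the commutator $[\sigma,T_c^{p}]=T_{\sigma(c)}^{p}T_c^{-p}\in A_n$, which obliges you to prove the stronger fact that $T_{c'}^{p}T_c^{-p}$ has infinite order whenever $c'\neq c$ --- requiring Thurston's construction in the intersecting case, machinery the paper avoids entirely. On the other hand, your endgame (restricting the Nielsen-realized isometric action to the invariant geodesic and ruling out both a faithful and a trivial map to $O(2)$) is more self-contained than the paper's cut-and-cap step: it needs no external input about pure mapping class groups and sidesteps the question, left implicit in the paper, of whether cutting along $c$ induces an injection of the $A_n$-subgroup. Both arguments rely on simplicity of $A_n$ with $n\geq 5$, consistent with the paper's standing hypotheses.
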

\begin{proof}
First, we consider the case of the direct product.  Suppose we assume on the contrary that $A_n \times \Z$ embeds in $\map(S_g)$ such that the embedded $\Z$-component is generated by a power of a Dehn twist $T_c ^\alpha$.  This leads to a subgroup $H\leq \map(S_g)$ such that $H \cong A_n$ and $T_c^\alpha$ with the property that, for all $F\in H$,
$$T_c^\alpha \cdot F = F\cdot T_c^\alpha \iff T_c^\alpha = F\cdot T_c^\alpha \cdot F^{-1}= T_{F(c)}^\alpha.$$
In other words, $H$ preserves a simple closed curve $c$. For the case $c$ nonseparating, we cut $S_g$ along $c$ and cap the two boundaries with once marked disks. Thus, the subgroup $H\leq \map(S_g)$ induces an isomorphic subgroup $H'< \map(S_{g-1,2})$. Let $\mathrm{PMod}(S_{g-1,2})$ be the mapping classes that fix each marked point (also called the \textit{pure mapping class group}). Since $H'$ is generated by two elements of odd order, $H'$ fixes the marked point in each disk (involved in the capping). Therefore, it follows that $H'< \mathrm{PMod}(S_{g-1,2})$ contradicting the fact that all finite subgroups of $\mathrm{PMod}(S_{g-1,2})$ are cyclic~\cite[Lemma 4.2]{breuer}. 

For the case when $c$ separating, by cutting and capping, we obtain two subsurfaces $S_{g_0,1}$ and $S_{g_0',1}$ with $g=g_0+g_0'$. Thus the subgroup $H\leq \map(S_g)$ induces an isomorphic subgroup $H'$ of $\mathrm{Mod}(S_{g_0,1})$ or $\mathrm{Mod}(S_{g_0',1})$, leading to the same contradiction. Note that no element of $H$ can permute the two boundary components of $S_g\setminus\{c\}$ as $H$ can always be generated by two mapping classes of odd order. Therefore, the assertion follows.  In the case of the semidirect product,  the assertion follows from the fact that $A_n \times \Z < A_n \rtimes \Z$.
\end{proof}

\begin{prop}
\label{prop_multicurve}
Let $H\leq \map(S_g)$ such that $H \cong A_n$. If $H$ preserves a multicurve $\mathcal{C}$ then $|\mathcal{C}| \geq n$. Moreover, if $\mathcal{C}$ contains both separating and non-separating curves then $|\mathcal{C}| \geq 2n$.
\end{prop}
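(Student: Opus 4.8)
The plan is to study the permutation action of $H$ on the set of components of $\mathcal{C}$ and to leverage the simplicity of $A_n$ for $n\geq5$. Since each element of $H$ is represented by a homeomorphism of $S_g$ permuting the curves of $\mathcal{C}$, we obtain a homomorphism $\rho\colon H\to\Sigma_{|\mathcal{C}|}$ into the symmetric group on the components of $\mathcal{C}$. As $A_n$ is simple, $\ker\rho$ is either trivial or all of $H$. The first step is to exclude $\ker\rho=H$: if $H$ acted trivially on $\mathcal{C}$ it would fix some component $c$ setwise, and the cutting-and-capping argument used in the proof of Lemma~\ref{lem:scc} forbids this. Indeed, cutting $S_g$ along $c$ and capping the resulting boundary components with once-marked disks produces a copy of $H$ inside the mapping class group of the cut surface; since $A_n$ is perfect for $n\geq5$, the homomorphism to $\Sigma_2$ recording the permutation of the two new marked points (or of the two complementary pieces, when $c$ is separating) is trivial, so $H$ fixes them and hence lands in a pure mapping class group---faithfully on at least one piece in the separating case, by simplicity---contradicting the fact that all finite subgroups of $\mathrm{PMod}$ are cyclic~\cite[Lemma 4.2]{breuer}. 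Therefore $\rho$ is injective.

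A faithful $\rho$ embeds $A_n$ into $\Sigma_{|\mathcal{C}|}$, whence $|\mathcal{C}|!\geq|A_n|=n!/2$. Since $(n-1)!<n!/2$ for $n\geq3$, we cannot have $|\mathcal{C}|\leq n-1$, so $|\mathcal{C}|\geq n$; equivalently, each nontrivial orbit of the $H$-action is a coset space of a proper subgroup of $A_n$, whose minimal index is $n$. This proves the first assertion.

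For the refinement, note that a homeomorphism carries separating curves to separating curves and non-separating curves to non-separating curves, so $\mathcal{C}$ decomposes $H$-invariantly as $\mathcal{C}=\mathcal{C}_s\sqcup\mathcal{C}_{ns}$, where $\mathcal{C}_s$ and $\mathcal{C}_{ns}$ are its separating and non-separating components. If both are nonempty, then $H$ acts on each, and by the same dichotomy each action is either trivial or faithful; triviality is again impossible because $H$ cannot fix a single curve. Applying the counting bound above to each invariant subset yields $|\mathcal{C}_s|\geq n$ and $|\mathcal{C}_{ns}|\geq n$, so $|\mathcal{C}|=|\mathcal{C}_s|+|\mathcal{C}_{ns}|\geq 2n$.

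The main obstacle is the first step---showing that $A_n$ cannot fix a single essential simple closed curve---since this is where the topology enters, through cutting and capping, the use of perfectness to kill the possible interchange of the two sides, and the classification of finite subgroups of the pure mapping class group as cyclic. Once this is secured, the rest is elementary: simplicity forces $\rho$ to be faithful, and a faithful permutation representation of $A_n$ has degree at least $n$, applied once to $\mathcal{C}$ and, in the refinement, separately to the $H$-invariant sets $\mathcal{C}_s$ and $\mathcal{C}_{ns}$.
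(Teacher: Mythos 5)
Your proposal is correct and follows essentially the same route as the paper: a permutation representation $H\to\Sigma_{|\mathcal{C}|}$, simplicity of $A_n$ forcing the kernel to be trivial once the fixed-curve case is excluded by the cutting-and-capping argument of Lemma~\ref{lem:scc}, and the same separating/non-separating decomposition for the refined bound. The only difference is cosmetic: you spell out the degree bound via the factorial comparison $(n-1)!<n!/2$ where the paper simply asserts that a faithful permutation representation of $A_n$ has degree at least $n$.
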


\begin{proof}
Suppose $\mathcal{C}=\{c_1,c_2,\dots,c_r\}$ is a multicurve and $H$ preserves $\mathcal{C}$. Then, any $F\in H$ induces a permutation $\Pi_F$ on $r$ simple closed curves, giving rise to a homomorphism from $H$ to $\Sigma_r$ by sending $F \mapsto \Pi_F$. We claim that the kernel of this homomorphism can not be the whole $H$ as $H$ would otherwise preserve each simple closed curve $c_i$ in $\mathcal{C}$, violating the assertion in Lemma~\ref{lem:scc}. Since $H$ is simple, it forces the homomorphism to be injective, which implies $|\mathcal{C}| \geq n$.

Moreover, if $\mathcal{C}$ contains both separating and non-separating curves then $\mathcal{C}$ can be written as $\mathcal{C}=\mathcal{C}_1\cup \mathcal{C}_2,$ where $\mathcal{C}_1$ is the collection of separating curves and $\mathcal{C}_2$ is the collection of non-separating curves. Following the same line of arguments, as above, for each sub-collection individually, we obtain $|\mathcal{C}|=|\mathcal{C}_1|+|\mathcal{C}_2|\geq 2n$.
\end{proof}\

\begin{cor}
Let $H < \map(S_g)$ be such that $H \cong A_n \rtimes \Z$, where the $\Z$-factor is generated by a multitwist associated with a multicurve $\mathcal{C}$ in $S_g$.  Then $|\mathcal{C}| \geq n$.
\end{cor}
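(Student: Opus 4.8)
The plan is to deduce the statement from Proposition~\ref{prop_multicurve}, whose hypothesis requires an $A_n$-subgroup preserving a multicurve. First I would fix notation: write $M=\prod_{j=1}^{\ell}T_{c_j}^{p_j}$ for the generator of the $\Z$-factor, with $\mathcal{C}=\{c_1,\dots,c_\ell\}$ its underlying multicurve and each $p_j\neq0$, and let $K\lhd H$ denote the normal subgroup with $K\cong A_n$ (this is unique, as it is the commutator subgroup of $H$). The entire proof then reduces to showing that $K$ preserves $\mathcal{C}$, i.e.\ that $F(\mathcal{C})=\mathcal{C}$ for every $F\in K$, after which Proposition~\ref{prop_multicurve} instantly yields $|\mathcal{C}|\geq n$.

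The crucial step is to observe that $M^{2}$ is central in $H$. By Proposition~\ref{prop:inf_extension}, the conjugation action of the generator $M$ on $K$ is realized by the automorphism $\varphi(1)$ of $A_n$, which is either the identity or $\chi_{(1~2)}$. Since $\chi_{(1~2)}^{2}=\mathrm{id}$, in both cases $M^{2}FM^{-2}=\varphi(1)^{2}(F)=F$ for all $F\in K$; as $M^{2}$ also commutes with $M$, it lies in $Z(H)$. I would record this uniformly, rather than splitting into the direct-product and semidirect cases, since passing to the square is exactly what neutralizes the order-two twisting automorphism of the semidirect product.

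With $M^{2}$ central, for each $F\in K$ the identity $FM^{2}F^{-1}=M^{2}$ holds. Translating into mapping-class language via $F T_{c}F^{-1}=T_{F(c)}$ gives
$$\prod_{j=1}^{\ell}T_{F(c_j)}^{2p_j}=\prod_{j=1}^{\ell}T_{c_j}^{2p_j}.$$
I would then invoke the uniqueness of the presentation of a multitwist as a product of Dehn-twist powers along a multicurve with nonzero exponents (the supporting multicurve being its canonical reduction system) to conclude that the two supports agree, namely $F(\mathcal{C})=\mathcal{C}$. Hence $K\cong A_n$ preserves $\mathcal{C}$, and Proposition~\ref{prop_multicurve} gives $|\mathcal{C}|\geq n$.

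The main obstacle is precisely this last inference, passing from the algebraic equality $FM^{2}F^{-1}=M^{2}$ to the geometric statement $F(\mathcal{C})=\mathcal{C}$; it rests entirely on the rigidity of multitwists, that two multitwists with nonzero coefficients coincide only if their supporting multicurves coincide. The subtlety in the semidirect case is that $M$ itself need not centralize $K$, so one cannot argue directly with $M$; working with the central element $M^{2}$ is the device that circumvents this.
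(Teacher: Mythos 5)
Your proposal is correct and follows essentially the same route as the paper: the paper also reduces to the direct-product case by passing to the copy of $A_n\times\Z$ inside $A_n\rtimes\Z$ (whose $\Z$-factor is generated by $M^2$, a multitwist on the same multicurve), uses $FT_cF^{-1}=T_{F(c)}$ together with the rigidity of multitwists to conclude that the $A_n$-factor preserves $\mathcal{C}$, and then invokes Proposition~\ref{prop_multicurve}. Your explicit appeal to the canonical reduction system merely makes precise the uniqueness step that the paper leaves implicit.
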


\begin{proof}
As we have seen before (in Lemma~\ref{lem:scc}),  it suffices to consider the case of the direct product.  Since the multitwist commutes with every element of the $A_n$-factor of $H$, it follows (from a similar argument as in proof of Lemma~\ref{lem:scc}) that $H$ preserves $\mathcal{C}$. 
Our assertion now follows from Proposition~\ref{prop_multicurve}.
\end{proof}

We conclude this section with the examples of liftable Dehn twist under alternating covers, leading to $A_n \times \Z$ subgroup of $\map(S_g)$.

\begin{exmp}
Consider the alternating cover induced by the icosahedral subgroup $H<\map(S_{19})$, described in Example~\ref{eg:icosa}. Consider a multicurve $\mathcal{C}=\{c_1,c_2,\dots,c_{20}\}$ where $c_i$'s are disjoint, essential simple closed curves corresponding to each face of the icosahedron. Note that $H$ preserves the multicurve $\mathcal{C}$ and acts on it with exactly one orbit. Let $\bar{c}$ denote the image of $c_i$'s inside $S_{19}/ H$. Then, from the discussion above, we conclude that $T_{\bar{c}}$ lifts under the cover: $S_{19} \longrightarrow S_{19}/ H$, resulting in an $A_5 \times \Z$-subgroup of $\map(S_{19})$.

Applying similar argument as above, we obtain $A_n \times \Z$-subgroups of $\map(S_g)$ for the alternating covers induced by the symmetry group of other Platonic solids.

\end{exmp}
\subsection{Classification of the weak conjugacy classes in $\map(S_g)$ for $g=10,11$}
\label{sec:table}
In this section, we will classify the weak conjugacy classes of the alternating and $\E$-subgroups of $\map(S_{g})$ for $g=10,11$, using Theorems~\ref{thm:main1} and \ref{thm:main2}, respectively. The weak conjugacy classes of the alternating and $\E$-subgroups of $\map(S_{10})$ are listed in Table~\ref{table1}. The weak conjugacy classes of the alternating and $\E$-subgroups of $\map(S_{11})$ are listed in Table~\ref{table2}. Our choice of $S_{10}$ and $S_{11}$ is motivated by the fact that these surfaces admit various $\E$-actions up to weak conjugacy.
 
To achieve our classification for $g =10, 11$, we first need to analyze the $\E$ groups that can act on $S_g$. In this regard, we use the signatures for all finite group actions, along with their GAP IDs, on $S_g$ for $2 \leq g \leq 48$ available at \cite{paulhus}. To find out the GAP IDs of groups in the $\E$-family of order up to $84\cdot (11-1)=840$, we run a very naive code in GAP 4.13.1~\cite{GAP}. We match these two sets of data to identify which group in this family acts with which signature on $S_g$ for $g=10,11$. With this information in hand, we construct all possible surface kernel epimorphisms by running another script in Mathematica 13.1.0~\cite{mathematica}. Finally, we choose a representative surface kernel epimorphism from each weak conjugacy class and record the $\E$-data set associated with it in the table.

In addition, in the table, we compute the standard cyclic factors corresponding to a weak conjugacy class of alternating subgroups of $\map(S_g)$ for $g=10,11$ using Proposition~\ref{prop:cyclicgen}. Moreover, using Propositions~\ref{prop:wlp} and \ref{prop:wls}, we compute the weak liftable pair corresponding to a weak conjugacy class of alternating and $\E$-subgroups of $\map(S_g)$ for $g=10,11$. We avoid writing $\Pi_{\overline{G}}$ in a weak-liftable pair (in short, WLP) to make the tables clutter-free.

The following conclusions can be drawn from the data in Tables~\ref{table1}-\ref{table2}.
\begin{enumerate}
\item There are only two weak conjugacy classes of subgroups of $\map(S_{10})$ isomorphic to $A_4$, represented by the following data sets $$\D_a^1=(4,0;[(1 4)(2 3);2]^{[3]},[(1 2 4);3],[(1 3 2);3]^{[2]}), \text{ and } \D_a^2=(4,1;[(1 2)(3 4);2]^{[3]}).$$
\begin{enumerate}
\item For any $H<\map(S_{10})$ with $H \in [\D_a^1]$, the liftable mapping class group $\mathrm{LMod}(\Orb_H)$ is either torsion-free or it contains a periodic mapping class $\overline{G}$ of order $3$ whose cyclic data set $D_{\overline{G}}=(3,0;(1,3),(2,3))$. In the latter case, since $\overline{G}$ has only two fixed points, it will permute the order-$2$ cone points and the order-$3$ cone points of $\D_a^1$, i.e., $\Pi_{\overline{G}}=(1~2~3)(4~5~6)$. Moreover, the weak-liftable pair $(\D_a^1,(D_{\overline{G}},\Pi_{\overline{G}}))$ gives rise to an $A_4 \times \Z_3$-subgroup of $\mathrm{SMod}(S_{10})$.
\item For any $H<\map(S_{10})$ with $H \in [\D_a^2]$, the liftable mapping class group $\mathrm{LMod}(\Orb_H)$ is either torsion-free or it contains a periodic mapping class of order $2,3$ or $6$, whose cyclic data set is one of the following.
\begin{enumerate}
\item $D_{{\overline{G}}_1}=(2,0;(1,2)^{[4]})$. In this case, ${\overline{G}}_1$ can permute the three cone points of $[\D_a^2]$ in two possible ways: it can either fix all three cone points, i.e., $\Pi_{\overline{G}_1}=id$, or it can permute two cone points and fix the other, i.e., $\Pi_{\overline{G}_1}=(1~2)$. However, in both the cases $(\D_a^2,(D_{{\overline{G}}_1},\Pi_{\overline{G}_1}))$ forms a WLS-pair and give rise to two $\Sigma_4$-subgroups of $\mathrm{SMod}(S_{10})$ that are not weakly conjugate. 
\item $D_{{\overline{G}}_2}=(3,1;-)$. Since ${\overline{G}}_2$ is free, $\Pi_{\overline{G}_2}=id$ and $(\D_a^2,(D_{{\overline{G}}_2},\Pi_{\overline{G}_2}))$ forms a weak-liftable pair, giving rise to an $A_4 \times \Z_3$-subgroup of $\mathrm{SMod}(S_{10})$.
\item $D_{{\overline{G}}_3}=(6,0;(1,3),(1,2),(1,6))$. Since ${\overline{G}}_3$ has only one fixed point, it can only permute the three cone points, i.e., $\Pi_{\overline{G}_3}=(1~2~3)$. However, the weak-liftable pair $(\D_a^2,(D_{{\overline{G}}_3},\Pi_{\overline{G}_3}))$ gives rise to an $A_4 \rtimes_{\varphi} \Z_6$-subgroup of $\mathrm{SMod}(S_{10})$.
\end{enumerate}
\end{enumerate}
\item There is only one weak conjugacy class of subgroups of $\map(S_{10})$ isomorphic to $A_5$, represented by:
\[\D_a^3=(5,0;[(1 5)(2 4);2],[(2 4)(3 5);2],[(2 3)(4 5);2],[(1 2 3 4 5);5]).\]
For any $H<\map(S_{10})$ with $H \in [\D_a^3]$, the liftable mapping class group $\mathrm{LMod}(\Orb_H)$ is either torsion-free or it contains a periodic mapping class $\overline{G}$ of order $3$ whose cyclic data set $D_{\overline{G}}=(3,0;(1,3),(2,3))$. In the latter case, since $\overline{G}$ has only two fixed points, it will permute the three order-$2$ cone points and fix the order-$5$ cone point of $\D_a^3$, i.e., $\Pi_{\overline{G}}=(1~2~3)$. Furthermore, the weak-liftable pair $(\D_a^3,(D_{\overline{G}},\Pi_{\overline{G}}))$ gives rise to an $A_5 \times \Z_3$-subgroup of $\mathrm{SMod}(S_{10})$.
\item There is only one weak conjugacy class of subgroups of $\map(S_{10})$ isomorphic to $A_6$, represented by $\D_a^4=(6,0;[(1 2)(4 6);2],[(1 2 4 3)(5 6);4],[(2 3 4 5 6);5])$. For any $H<\map(S_{10})$ with $H \in [\D_a^4]$, the liftable mapping class group $\mathrm{LMod}(\Orb_H)$ is torsion-free. Furthermore, by Corollary~\ref{cor:self_normalizing}, one can actually conclude that $\mathrm{LMod}(\Orb_H)$ is trivial. Thus, such $H<\map(S_{10})$ are self-normalizing alternating groups.
\item There is only one weak conjugacy class of subgroups of $\map(S_{11})$ isomorphic to $A_4$, represented by $\D_a^5=(4,0;[(1 2)(3 4);2]^{[2]},[(1 2 4);3],[(1 3 4);3]^{[2]},[(2 3 4);3])$. For any $H<\map(S_{11})$ with $H \in [\D_a^5]$, the liftable mapping class group $\mathrm{LMod}(\Orb_H)$ is either torsion-free or it contains a periodic mapping class $\overline{G}$ of order $2$ whose cyclic data set $D_{\overline{G}}=(2,0;(1,2),(1,2))$. In the latter case, since $\overline{G}$ has only two fixed points, it can permute the six cone points of $[\D_a^5]$ in three possible ways; namely, $\Pi_{\overline{G}}^1=(3~4)(5~6)$, $\Pi_{\overline{G}}^2=(1~2)(3~4)(5~6)$, and $\Pi_{\overline{G}}^3=(1~2)(3~4)$. Furthermore, $(\D_a^5,(D_{\overline{G}},\Pi_{\overline{G}}^1))$ and $(\D_a^5,(D_{\overline{G}},\Pi_{\overline{G}}^2))$ form two WLS-pairs and give rise to two $\Sigma_4$-subgroups of $\mathrm{SMod}(S_{11})$ that are not weakly conjugate. However, $(\D_a^5,(D_{\overline{G}},\Pi_{\overline{G}}^3))$ forms a weak-liftable pair and gives rise to an $A_4\times\Z_2$-subgroups of $\mathrm{SMod}(S_{11})$.

\item There is only one weak conjugacy class of subgroups of $\map(S_{11})$ isomorphic to $A_5$, represented by $\D_a^6=(5,0;[(1 3)(2 4);2]^{[2]},[(3 5 4);3],[(3 4 5);3])$. Note that this is the dodecahedral action on $S_{11}$, already explained in detail in Example~\ref{exmp:lift_dodeca}.
\end{enumerate}

\begin{landscape}
\begin{table}
\vfill
    \centering
    \renewcommand{\arraystretch}{1.6}
    \resizebox{22cm}{!}{
    \begin{tabular}{||c||c||c||}\hline\hline
\textbf{Groups} & \textbf{Weak conjugacy classes in Mod($S_{10}$)} & \textbf{Cyclic factors $[D_{\sigma_a};D_{\tau_a}]$ or WLP $[\D_a;D_{\overline{G}}]$}\\
\hline
\hline
\multirow{3}{*}{$A_4$} & $\D_a^1=(4,0;[(1 4)(2 3);2]^{[3]},[(1 2 4);3],[(1 3 2);3]^{[2]})$ & $[(3,3;(2,3)^{[3]});~(3,3;(1,3)^{[3]})]$\\
\cline{2-3} & \makecell{$\D_a^2=(4,1;[(1 2)(3 4);2]^{[3]})$} & $[(3,4;-);~(3,4;-)]$\\
\hline
$A_5$ & $\D_a^3=(5,0;[(1 5)(2 4);2],[(2 4)(3 5);2],[(2 3)(4 5);2],[(1 2 3 4 5);5])$ & $[(3,4;-);~(5,2;(1,5),(4,5))]$\\
\hline
$A_6$ & $\D_a^4=(6,0;[(1 2)(4 6);2],[(1 2 4 3)(5 6);4],[(2 3 4 5 6);5])$ & $[(3,4;-);~(5,2;(1,5),(4,5))]$\\
\hline
\multirow{2}{*}{$\Sigma_4$} & $((4,2,1),0;[((1 4)(2 3),\overline{0});2,1],[((2 1 4),\overline{1});2,2],[((1 2)(3 4),\overline{1});2,2]^{[2]},[((1 3 4), \overline{1});4,2])$ & $[\D_a^2;~(2,0;(1,2)^{[4]})]$\\
\cline{2-3} & \makecell{$((4,2,1),0;[((2 1 3),\overline{1});2,2],[((1 4 3),\overline{1});4,2],[((1 3 4),\overline{1});4,2]^{[2]})$} & $[\D_a^2;~(2,0;(1,2)^{[4]})]$ \\
\hline
\multirow{2}{*}{$A_4 \times \mathbb{Z}_3$} & $\bigl((4,3,0),0;[((1 2)(3 4),\overline{0});2,1],[((1 2 4),\overline{0});3,1],[((2 3 4),\overline{1});3,3],[((2 3 4),\overline{2});3,3]\bigr)$ & $[\D_a^1;~(3,0;(1,3),(2,3))]$\\
\cline{2-3} & \makecell{$\bigl((4,3,0),1;[((1 2)(3 4),\overline{0});2,1]\bigr)$} & $[\D_a^2;~(3,1;-)]$\\
\hline
$A_4 \rtimes \mathbb{Z}_6$ & $\bigl((4,6,1),0;[((1 2 3),\overline{2});3,3],[((1 3)(2 4),\overline{3});4,2],[((1 2 3),\overline{1});6,6]\bigr)$ & $[\D_a^2;~(6,0;(1,3),(1,2),(1,6))]$\\
\hline
$A_5 \times \mathbb{Z}_3$ & $\bigl((4,3,0),0;[((1 5)(3 4),\overline{0});2,1],[((2 5 3),\overline{1});3,3],[((1 2 3 4 5),\overline{2});15,3]\bigr)$ & $[\D_a^3;~(3,0;(1,3),(2,3))]$\\
\hline
\hline
    \end{tabular}}
    \caption{The weak conjugacy classes of alternating and $\E$-subgroups of $\map(S_{10})$.}
    \label{table1}
 \end{table}

\begin{table}
\vfill
    \centering
    \renewcommand{\arraystretch}{1.6}
    \resizebox{22cm}{!}{
\begin{tabular}{||c||c||c||}\hline \hline
\textbf{Groups} & \textbf{Weak conjugacy classes in Mod($S_{11}$)} & \textbf{Cyclic factors $[D_{\sigma_a};D_{\tau_a}]$ or WLP $[\D_a;D_{\overline{G}}]$}\\
\hline\hline
$A_4$ & $\D_a^5=(4,0;[(1 2)(3 4);2]^{[2]},[(1 2 4);3],[(1 3 4);3]^{[2]},[(2 3 4);3])$ & $[(3,3;(1,3)^{[2]},(2,3)^{[2]});~(3,3;(1,3)^{[2]},(2,3)^{[2]})]$ \\
\hline
$A_5$ & $\D_a^6=(5,0;[(1 3)(2 4);2]^{[2]},[(3 5 4);3],[(3 4 5);3])$ & $[(3,3;(1,3)^{[2]},(2,3)^{[2]});~(5,3;-)]$  \\
\hline
\multirow{2}{*}{$\Sigma_4$} & $((4,2,1),0;[((1 3 4),\overline{0});3,1],[((2 3 4),\overline{0});3,1],[((1 3 4),\overline{1});4,2]^{[2]})$ & $[\D_a^5;~(2,0;(1,2)^{[2]})]$\\
\cline{2-3} & \makecell{$((4,2,1),0;[(id, \overline{1});2,2],[((1 2 3), \overline{1});2,2],[((1 4)(2 3),\overline{0});2,1],[((1 4 3), \overline{0});3,1]^{[2]})$} & $[\D_a^5;~(2,0;(1,2)^{[2]})]$  \\
\hline
\multirow{2}{*}{$\Sigma_5$} & $((5,2,1),0;[((1 3 2),\overline{0});3,1], [((1 5 4),\overline{1});4,2], [((2 1 3 4 5),\overline{1});4,2])$ & $[\D_a^6;~(2,0;(1,2)^{[2]})]$\\
\cline{2-3} & \makecell{$((5,2,1),0;[((1 4)(2 3),\overline{0});2,1], [((2 3 5 1 4),\overline{1});6,2], [((3 4 5),\overline{1});6,2])$} & $[\D_a^6;~(2,0;(1,2)^{[2]})]$ \\
\hline
$A_4 \times \mathbb{Z}_2$ & $\bigl((4,2,0),0;[((1 2)(3 4),\overline{0});2,1],[((1 2 4),\overline{0});3,1],[((2 3 4),\overline{1});6,2],[((2 3 4),\overline{1});6,2]\bigr)$ & $[\D_a^5;~(2,0;(1,2)^{[2]})]$ \\
\hline
$A_5\times \mathbb{Z}_2$ & $\bigl((5,2,0),0;[((1 4)(2 5),\overline{0});2,1],[((1 4)(3 5),\overline{1});2,2],[((2 3)(4 5),\overline{1});2,2],[((3 4 5),\overline{0});3,1]\bigr)$ & $[\D_a^6;~(2,0;(1,2)^{[2]})]$ \\
\hline\hline
    \end{tabular}}
    \caption{The weak conjugacy classes of alternating and $\E$-subgroups of $\map(S_{11})$.}
    \label{table2}
\end{table}
\end{landscape}
\section*{Acknowledgements}The authors would like to thank Ravi Tomar  and Pankaj Kapari for some helpful discussions. The second author is supported by an ANRF, Department of Science and Technology, India, File No. MTR/2023/000610.

\bibliographystyle{plain}
\bibliography{alt_lift}
\end{document}